\newtheorem{thm}{Theorem}[section]
\newtheorem{cor}[thm]{Corollary}
\newtheorem{lem}[thm]{Lemma}
\newtheorem{prop}[thm]{Proposition}
\newtheorem{defn}[thm]{Definition}
\newtheorem{Remark}{Remark}
\numberwithin{equation}{section}
\numberwithin{Remark}{section}
\begin{document}

\title{On Uniqueness of Conformally Compact Einstein Metrics with Homogeneous Conformal Infinity}

\author{Gang Li$^\dag$}

\begin{abstract} In this paper we show that for a Berger metric $\hat{g}$ on $S^3$, the non-positively curved conformally compact Einstein metric on the $4$-ball $B_1(0)$ with $(S^3, [\hat{g}])$ as its conformal infinity is unique up to isometries and it is the metric constructed by Pedersen \cite{Pedersen}. In particular, since in \cite{LiQingShi}, we proved that if the Yamabe constant of the conformal infinity $Y(S^3, [\hat{g}])$ is close to that of the round sphere then any conformally compact Einstein manifold filled in must be negatively curved and simply connected, therefore if $\hat{g}$ is a Berger metric on $S^3$ with $Y(S^3, [\hat{g}])$ close to that of the round metric, the conformally compact Einstein metric filled in is unique up to isometries.
\end{abstract}

\renewcommand{\subjclassname}{\textup{2000} Mathematics Subject Classification}
 \subjclass[2010]{Primary 53C25; Secondary 58J05, 53C30, 34B15}


\thanks{$^\dag$ Research supported by The Fundamental Research Funds of Shandong University 2016HW008.}

\address{Gang Li, Department of Mathematics, Shandong University, Jinan, Shandong Province, China}
\email{runxing3@gmail.com}

\maketitle


\section{Introduction}

The research on conformally compact Einstein manifolds has been an active area since the paper \cite{FG}. In this paper we are concerned with uniqueness of conformally compact Einstein metrics( for definition, see Definition \ref{defn_conformallycompactEinsteinmetric}) on manifolds with prescribed conformal infinity.

In \cite{GL}, given any Riemannian metric on the $n$-sphere $S^n$ which is $C^{2,\alpha}$ close to the round metric as the conformal infinity, Graham and Lee proved the existence of a conformally compact Einstein metric on the $(n+1)$-ball $B_1(0)$, which is unique in a small neighborhood of the asymptotic solution they constructed in a weighted space by the implicit function theorem. It is interesting to understand whether the solution is globally unique with the prescribed conformal infinity. On the other hand, in light of LeBrun's local construction in \cite{LeBrun1}, when the conformal infinity is a Berger metric on $S^3$ or a generalized Berger metric which is left invariant under the $\text{SU}(2)$ action, Pedersen \cite{Pedersen} and Hitchin \cite{Hitchin1} constructed a global conformally compact Einstein metric on the $4$-ball, which has self-dual Weyl curvature, and the metric is unique under the self-duality assumption. When the conformal structure at infinity and the non-local term in the expansion of the Einstein metric at infinity are both given, Anderson \cite{Anderson} and Biquard \cite{Biquard2} proved that the conformally compact Einstein metric is unique up to isometry. Recall that if the conformal infinity is the conformal class of the round sphere metric, it is proved that the conformally compact Einstein metric must be the hyperbolic space, see \cite{Andersson-Dahl}\cite{Q}\cite{DJ}\cite{LiQingShi}. Based on \cite{ST} and \cite{DJ}, in \cite{LiQingShi} we proved that for any conformal infinity $(S^n, [\hat{g}])$ with its Yamabe constant close to that of the round sphere metric, the conformally compact Einstein manifold filled in must be a Hadamard manifold, with its sectional curvature close to $-1$ uniformly.

In X. Wang's paper \cite{Wang}, he discussed existence of Killing vector fields on a non-positively curved conformally compact Einstein manifold by constructing an asymptotically Killing vector field, see in Theorem \ref{thm_baseVector} and Proposition \ref{prop_vector_extension}. That gives possibility of proving existence of Killing vector fields under conditions near infinity, and possibility of the proof of global uniqueness of conformally compact Einstein metrics with prescribed conformal infinity with symmetry. Along this line we prove the following main theorem of the paper:
\begin{thm}\label{thm_someBergermetric}
Let $\hat{g}$ be a Berger metric on $S^3$ so that $\hat{g}$ has the diagonal form
\begin{align*}
\hat{g}=\lambda_1\sigma_1^2+\lambda_2(\sigma_2^2+\sigma_3^2),
\end{align*}
where $\lambda_1$ and $\lambda_2$ are two positive constants and $\sigma_1,\sigma_2$ and $\sigma_3$ are three left invariant $1$-forms under the $\text{SU}(2)$ action. Assume that $\frac{1}{4}<\frac{\lambda_1}{\lambda_2}<4$, then up to isometry there exists at most one non-positively curved conformally compact Einstein metric on the $4$-ball $B_1(0)$ with $(S^3, [\hat{g}])$ as its conformal infinity. In particular, it is the metric constructed in \cite{Pedersen} when it is non-positively curved. Also, it is the perturbation metric in \cite{GL} when $\frac{\lambda_1}{\lambda_2}$ is close to $1$. Moreover, by the theorem in \cite{LiQingShi}, for $\frac{\lambda_1}{\lambda_2}$ close enough to $1$, any conformally compact Einstein manifold filled in is automatically negatively curved and simply connected, and therefore it is unique up to isometry.
\end{thm}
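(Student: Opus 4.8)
The plan is to reduce the problem to a cohomogeneity-one ODE analysis and then to exploit the non-positive curvature to pin down the solution uniquely. First I would start from an arbitrary non-positively curved conformally compact Einstein metric $g$ on $B_1(0)$ with conformal infinity $(S^3, [\hat g])$. The representative $\hat g = \lambda_1 \sigma_1^2 + \lambda_2(\sigma_2^2 + \sigma_3^2)$ with $\lambda_1 \neq \lambda_2$ has connected isometry group containing $\mathrm{SU}(2)_L \times \mathrm{U}(1)_R$: the left $\mathrm{SU}(2)$ preserving all $\sigma_i$, and the right Hopf circle rotating $(\sigma_2, \sigma_3)$, which is an isometry precisely because $\lambda_2 = \lambda_3$. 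Each generator is a conformal Killing field of $[\hat g]$, so by the construction of asymptotically Killing fields in Theorem \ref{thm_baseVector} together with the extension statement Proposition \ref{prop_vector_extension} (whose hypothesis of non-positive curvature is exactly what we have assumed), each lifts to a genuine Killing field of $(B_1(0), g)$. Integrating the resulting complete Killing fields gives an isometric $\mathrm{SU}(2) \times \mathrm{U}(1)$ action on $B_1(0)$ extending the boundary action, with the origin as its unique fixed point.

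Next I would put $g$ into normal form. Since the action is cohomogeneity one with principal orbit $S^3$, and the $\mathrm{U}(1)$ factor forces the orbit metrics to stay diagonal with equal $\sigma_2, \sigma_3$ coefficients, in geodesic polar coordinates $r$ centered at the fixed point we may write $g = dr^2 + a(r)^2 \sigma_1^2 + b(r)^2(\sigma_2^2 + \sigma_3^2)$. The Einstein condition $\mathrm{Ric}(g) = -3g$ then becomes a coupled second-order autonomous ODE system for $(a,b)$, the biaxial Bianchi IX Einstein system. The metric must satisfy two sets of boundary conditions: smooth closing-up of the $4$-ball at $r = 0$, which collapses the orbit and imposes $a(0) = b(0) = 0$ together with the standard jet conditions $a'(0) = b'(0) = 1$ (and higher-order matching) for a smooth cap; and the conformal-infinity condition as $r \to \infty$, namely that $g$ is asymptotically hyperbolic with $a(r)/b(r) \to \sqrt{\lambda_1/\lambda_2}$, so that the prescribed ratio $\lambda_1/\lambda_2 \in (\tfrac14, 4)$ is recovered.

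The analytic heart, and what I expect to be the main obstacle, is uniqueness for this two-point boundary value problem under the non-positive curvature constraint. My plan here is a shooting argument: parametrize the solutions that are smooth at the fixed point by a single scaling-normalized initial parameter (the leading discrepancy between the $a$- and $b$-jets at $r = 0$), and study the induced map sending this parameter to the asymptotic ratio $\lambda_1/\lambda_2$. The goal is to show that, restricted to the non-positively curved solutions, this map is strictly monotone, so that at most one initial parameter yields the given ratio. The non-positive curvature enters precisely as the sign and monotonicity input: expressing the sectional curvatures of $g$ in terms of $a, b, a', b'$ and their derivatives, one obtains Riccati-type differential inequalities and sign conditions along each trajectory that rule out two distinct non-positively curved trajectories hitting the same conformal infinity. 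Controlling the global behavior of the trajectory, avoiding finite-$r$ degeneration and ensuring the correct asymptotic regime, on the whole range $\frac14 < \lambda_1/\lambda_2 < 4$ is the delicate point, and is where I anticipate the bulk of the work.

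Finally, I would identify the unique solution. Pedersen's metric in \cite{Pedersen} is an explicit self-dual, hence Einstein, solution of the same reduced system with the same Berger conformal infinity, and it is non-positively curved in the stated range; since it lies on the non-positively curved branch it must be the unique solution, proving the ``at most one'' statement and the identification with \cite{Pedersen}. For $\lambda_1/\lambda_2$ close to $1$ the Graham--Lee perturbation metric of \cite{GL} is likewise a non-positively curved filling with this conformal infinity, hence coincides with it. And by the theorem of \cite{LiQingShi}, when $\lambda_1/\lambda_2$ is close enough to $1$ the Yamabe constant $Y(S^3, [\hat g])$ is close to that of the round sphere, so that any filling is automatically a Hadamard manifold; the non-positivity hypothesis is then free, upgrading the conclusion to genuine uniqueness up to isometry.
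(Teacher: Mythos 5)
Your structural reduction follows the paper's own route: extend the boundary symmetries to genuine Killing fields of the bulk (this is Theorem \ref{thm_symmetryrealization}, whose engine is Theorem \ref{thm_baseVector} together with the asymptotically-Killing construction and Obata's theorem --- not Proposition \ref{prop_vector_extension}, which goes in the opposite direction, from bulk Killing fields to boundary conformal Killing fields), fix the gauge by geodesic polar coordinates at the common fixed point, show the orbit metrics stay diagonal of Berger type (Lemma \ref{lem_symmetry_Threedimensional}), reduce to a cohomogeneity-one two-point problem, and finish by identifying the solution with Pedersen's metric and the Graham--Lee metric and invoking \cite{LiQingShi}. Two smaller inaccuracies: you assert that Pedersen's metric is non-positively curved on the whole range $\frac{1}{4}<\lambda_1/\lambda_2<4$, whereas the theorem only identifies the unique filling with Pedersen's metric \emph{when} that metric happens to be non-positively curved; and your asymptotic condition $a(r)/b(r)\to\sqrt{\lambda_1/\lambda_2}$ only becomes honest two-point boundary data on a compact interval after one proves that the geodesic defining function and the distance to the fixed point are related by $x=Ce^{-r}$ --- this is Theorem \ref{thm_definingfunction} and is a substantive step, not a triviality.

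The genuine gap is the analytic heart, which you explicitly defer: uniqueness for the reduced ODE boundary value problem. You propose a shooting argument from the center, claiming the map from the shooting parameter to the asymptotic ratio is strictly monotone on the non-positively curved branch via Riccati-type sign conditions, but no such monotonicity is proved, and it is precisely the hard step; moreover it is not what the paper does. The paper's argument is curvature-free at the ODE level: non-positive curvature is used only for the reduction, while uniqueness for $(\ref{equn_BergerEinstein01})$--$(\ref{equn_BergerBV01})$ is proved by comparing two hypothetical solutions directly, using monotonicity of $K$ and $\phi$ (Lemma \ref{lem_monotonicity01}), a sign analysis of the zeros of the differences $z_1', z_2'$ (Lemma \ref{lem_zeroz1z2}), weighted integral identities, the Fefferman--Graham expansion at $x=0$ including the non-local coefficient $g^{(3)}$, and Biquard's unique continuation \cite{Biquard2} to dispose of the case of equal $K(0)$ (Theorem \ref{thm_boundarycondition01}) before ruling out $K_1(0)\neq K_2(0)$ (Theorem \ref{thm_BergermetricStability}); the hypothesis $\frac{1}{4}<\lambda_1/\lambda_2<4$ enters through inequalities such as $3-4(\phi K)^{-1/3}+K^{-1/3}\phi^{-4/3}>0$, not through curvature signs. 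Your center-based parametrization also hides the issue the paper must confront: solutions are not determined by their jet at infinity through the locally-determined orders (the non-local term is a free parameter), so any uniqueness proof must either control that term (as the paper does via \cite{Biquard2}) or establish global monotonicity of the shooting map, which your sketch leaves entirely open.
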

By the main theorem, when the  sectional curvature of Pedersen's metric in \cite{Pedersen} is not non-positive at some point, there exists no non-positively curved conformally compact Einstein metric on the $4$-ball $B_1(0)$ with such conformal infinity.

Let $(M^{n+1}, g)$ be a simply connected non-positively curved conformally compact Einstein manifold. Under Wang's suggestion in \cite{Wang}, we show that for any smooth conformal Killing vector field $Y$ at the conformal infinity $(\partial M, [\hat{g}])$, it extends to a unique Killing vector field on $(M,g)$, see Theorem \ref{thm_symmetryrealization} and Lemma \ref{lem_zeroset}. To do this, by a lengthy but smart calculation based on the expansion of the Einstein metric $(\ref{equn_expansion1})(\ref{equn_expansion2})$, we show that the vector field $(\ref{equn_vector1})$ with $(\ref{vect1_tant})$ and $(\ref{vect1_vert})$ relating to the conformal Killing vector field on $(\partial M, [\hat{g}])$ is indeed an asymptotically Killing vector field. Therefore by Theorem \ref{thm_baseVector}, $Y$ extends to a Killing vector field $X$ on $(M,g)$.

We start with a unique center of gravity $p_0$ of $(M, g)$, which is a common fixed point of all isometries on the manifold. We then show that for any one parameter isometry group generated by a Killing vector field $X$ on $(M, g)$, the set of fixed points is in fact the image of a sub-space of the tangent space under the exponential map at the center of gravity $p_0$, see Lemma \ref{lem_zeroset}. Then based on the extension of the Killing vector fields we prove that if the conformal infinity 
is $(S^n, [\hat{g}])$ with $\hat{g}$ a homogeneous metric, i.e. there exist Killing vector fields $Y_1,...,Y_{n+k}$ which give a basis of the tangent space at each point on the boundary $(S^n,\hat{g})$, then the geodesic defining function $x$ about $\hat{g}$ and the distance function $r$ to the center of gravity satisfy
\begin{align*}
x=Ce^{-r}
\end{align*}
for some constant $C>0$, where $0\leq x\leq C$, see Theorem \ref{thm_definingfunction}. Homogeneous conformal infinity data guarantees that the geodesic spheres centered at the center of gravity $p_0\in M$ are also homogeneous spaces and are invariant under the action of the group generated by the Killing vector fields. For a fixed $q\in \partial M$, along the geodesic connecting $q$ and $p_0$ we show that the boundary value problem of the Einstein equations with non-positive sectional curvature and a homogeneous conformal infinity is equivalent to a boundary value problem of a system of ordinary differential equations $(\ref{equn_EinsteinODEs1})-(\ref{equn_boundaryvalue1})$. This gives a natural way of fixing gauge for the Einstein equations. Note that there is a nice description about classification of homogeneous metrics on the sphere $S^n$ in \cite{Ziller1}. In particular, if the conformal infinity has a representation which is a Berger metric on $S^3$, we show in Lemma \ref{lem_symmetry_Threedimensional} that the metric restricted on each geodesic sphere centered at the center of gravity is diagonal, and the problem becomes $(\ref{equn_BergerEinstein01})-(\ref{equn_BergerBV01})$; while for a generalized Berger metric on $S^3$, the problem reduces to the boundary value problem $(\ref{equn_GBergerEinstein01})-(\ref{equn_GBergerBV01})$. Direct calculation shows that Lemma \ref{lem_symmetry_Threedimensional} does not lead to self-duality or anti-self-duality condition of the Einstein metric. Using a comparison argument, we prove that the problem $(\ref{equn_BergerEinstein01})-(\ref{equn_BergerBV01})$ has a unique solution for $\frac{1}{4}<\phi(0)<4$ with $\phi(0)\neq 1$, see Theorem \ref{thm_BergermetricStability}. Uniqueness of the conformally compact Einstein metric with prescribed conformal infinity $(S^n, [\hat{g}])$ where $\hat{g}$ is a generalized Berger metric on $S^3$ or a homogeneous metric on $S^n$( $n\geq 4$) will be discussed else where. 
\vskip0.2cm
{\bf Acknowledgements.} The author would like to thank Professor Jie Qing and Professor Yuguang Shi for introducing him to this area, their helpful discussion and constant support.  The author is grateful to Professor Xiaodong Wang for discussion on existence of Killing vector fields on conformally compact Einstein manifolds. Thanks also due to Professor John M. Lee for discussion on regularity of conformally compact Einstein metrics. The author is thankful to Professor Wolfgang Ziller for pointing him the reference \cite{Ziller1}. The author would like to thank Jianquan Ge and Xiaoyang Chen for helpful discussion.

\section{A Fixed Point Discussion.}

Let $(M^{n+1}, g)$ be a Cartan-Hadamard manifold, i.e., $(M^{n+1}, g)$ is a simply-connected non-positively curved complete Riemannian manifold. It is well-known that $M^{n+1}$ is diffeomorphic to the Euclidean space $\mathbb{R}^{n+1}$. Let $p$ be a point in $M$. Now we assume also that $(M, g)$ is not locally conformally flat and for any point $q\in M$, the normal of the Weyl tensor $|W|_g(q)\to 0$ uniformly as the distance $d_g(q, p)\to \infty$. Denote
\begin{align*}
C=\{q\in M, |W|_g(q)=\sup_M|W|_g.\}.
\end{align*}
We know immediately that $C$ is a compact subset of $M$. Since $(M, g)$ is Cartan-Hadamard, by a theorem of Cartan( see \cite{Eberlein}), for every bounded subset $A \subset M$, there is a unique closed geodesic ball of the smallest radius that contains $A$. We assume $B(p_0)$ to be the unique closed geodesic ball of the smallest radius that contains $C$ and its center $p_0$ is called the {\it spherical center of gravity} of $C$, which is uniquely determined on $(M, g)$. So we also call $p_0$ the {\it spherical center of gravity( or center of gravity)} of $(M, g)$. Under any isometry on $(M, g)$, $C$ must be an invariant subset, and therefore, $p_0$ is a common fixed point for all isometries on $(M, g)$. It is clear that the geodesic spheres $S_{p_0}(r)$ centered at $p_0$ with radius $r>0$ are all invariant subsets under isometries on $(M, g)$.


On a Cartan-Hadamard manifold, for any two given points there exists a unique geodesic crossing them. All the geodesics are short geodesics. It is easy to check the following lemma:

\begin{lem}\label{lem_stableset}
Let $F: (M, g) \to (M, g)$ be an isometry. Then for any two fixed points $p_1, p_2$ under $F$, the geodesic crossing $p_1$ and $p_2$ must be a set of fixed points under $F$. Therefore, the set of fixed points under $F$ is either the single point $p_0$ which is the {\it spherical center of gravity} of $(M, g)$, or a smooth complete sub-manifold $Exp_{p_0}(S)$ of $M$, where $Exp_{p_0}$ is the exponential map at $p_0$ and $S$ is a linear subspace of the tangent space $T_{p_0}M$.
\end{lem}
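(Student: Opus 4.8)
The plan is to establish the two assertions in turn, using throughout the two defining features of a Cartan-Hadamard manifold: between any two points there is a unique geodesic, and the exponential map $Exp_{p_0}\colon T_{p_0}M\to M$ is a diffeomorphism. For the first assertion, fix points $p_1,p_2$ with $F(p_1)=p_1$ and $F(p_2)=p_2$, and let $\gamma$ be the unique geodesic with $\gamma(0)=p_1$, $\gamma(L)=p_2$, parametrized by arc length. Since $F$ is an isometry it carries $\gamma$ to a geodesic joining $F(p_1)=p_1$ to $F(p_2)=p_2$, which by uniqueness is $\gamma$ itself, so $F$ preserves $\gamma$ as a set. To upgrade this to pointwise invariance I would note that for each $t\in[0,L]$ the point $F(\gamma(t))$ lies on $\gamma$ and satisfies $d_g(F(\gamma(t)),p_1)=d_g(\gamma(t),p_1)=t$ and $d_g(F(\gamma(t)),p_2)=L-t$; the only point of $\gamma$ realizing both distances to the endpoints is $\gamma(t)$, whence $F(\gamma(t))=\gamma(t)$ on the whole segment.

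It then remains to fix the entire geodesic line. Differentiating $F(\gamma(t))=\gamma(t)$ at $t=0$ gives $dF_{p_1}(\gamma'(0))=\gamma'(0)$, and since an isometry commutes with the exponential map, $F(\gamma(t))=F(Exp_{p_1}(t\gamma'(0)))=Exp_{p_1}(t\,dF_{p_1}\gamma'(0))=Exp_{p_1}(t\gamma'(0))=\gamma(t)$ for all $t\in\Real$. Thus the full geodesic through $p_1$ and $p_2$ is a set of fixed points, proving the first assertion.

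For the second assertion I would pass to the center of gravity $p_0$, which is fixed by every isometry, and consider the linear subspace $S=\ker(dF_{p_0}-\mathrm{Id})\subset T_{p_0}M$. The equivariance relation $F\circ Exp_{p_0}=Exp_{p_0}\circ dF_{p_0}$ shows on one hand that each $w\in S$ yields a fixed point $Exp_{p_0}(w)$, and on the other hand that any fixed point $q$, joined to $p_0$ by the geodesic with initial velocity $v$, forces $v\in S$ by the first assertion (apply it to $p_0,q$ and differentiate at $p_0$), so $q\in Exp_{p_0}(S)$. Hence the fixed-point set equals $Exp_{p_0}(S)$. Because $Exp_{p_0}$ is a diffeomorphism of $T_{p_0}M$ onto $M$ carrying the linear subspace $S$ to its image, $Exp_{p_0}(S)$ is a smooth embedded submanifold; it is complete and totally geodesic, being swept out by the full geodesic lines through $p_0$ with initial direction in $S$, each fixed pointwise by the first assertion, and it degenerates to the single point $p_0$ precisely when $S=\{0\}$. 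I expect the only genuinely delicate step to be the upgrade from setwise to pointwise invariance of the geodesic in the first assertion; once that is secured, the remainder is the standard description of the fixed locus of an isometry through the $+1$-eigenspace of its differential together with the equivariance of $Exp_{p_0}$.
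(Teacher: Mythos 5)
Your proof is correct, and it supplies exactly the standard argument the paper has in mind: the paper states this lemma without proof (``It is easy to check''), and your route---uniqueness of geodesics between points of a Cartan--Hadamard manifold to fix the segment pointwise, the equivariance $F\circ Exp_{p}=Exp_{p}\circ dF_{p}$ to fix the whole geodesic line, and the identification of the fixed-point set with $Exp_{p_0}(S)$ for $S=\ker(dF_{p_0}-\mathrm{Id})$---is the natural way to verify it. No gaps; the delicate step you flag (setwise to pointwise invariance) is handled correctly via the distance argument, and could even be shortened by noting that $F\circ\gamma$ and $\gamma$ are both arc-length geodesics from $p_1$ to $p_2$ and hence coincide as parametrized curves.
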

For any closed subset $C_0$ of the zero set of the Killing vector $X$ in $M$ and any constant $r>0$, the set $S_{C_0}(r)=\{p\in M, dist_g(p, C_0)=r\}$ is invariant under the one-parameter group action induced by $X$.

Let $(r, \theta)=(r, \theta^1,...,\theta^n)$ be the polar coordinate on $T_{p_0}M$ with $(1, \theta^1,...,\theta^n)$ on the unit sphere of $T_{p_0}M$. Denote $\theta^0=r$. By the exponential map $Exp_{p_0}$, $(r, \theta)$ is considered as a polar coordinate on $M$. We can show that a Killing vector field $X$ is independent of $r$ under the coordinate $(r, \theta)$.
\begin{lem}\label{lem_vectorposition}
A Killing vector field $X$ has the form $X=\displaystyle\sum_{i=1}^n X^i(\theta)\frac{\partial}{\partial \theta^i}$ under the polar coordinate centered at $p_0$. That is, $X$ depends only on $\theta$ under the polar coordinate $(r, \theta^1,...,\theta^n)$.
\end{lem}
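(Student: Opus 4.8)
The plan is to realize $X$ as the generator of a one-parameter group of isometries and to exploit the fact, recorded just above, that $p_0$ is a common fixed point of every isometry of $(M,g)$. Since $(M,g)$ is complete, the Killing field $X$ is complete and hence generates a one-parameter group $\{\phi_t\}$ of isometries. First I would observe that each $\phi_t$ fixes $p_0$, and that an isometry fixing $p_0$ preserves the distance function $r=d_g(\cdot,p_0)$; hence $\phi_t^* r = r$ and, differentiating in $t$ at $t=0$, $X(r)=dr(X)=0$. In the polar coordinates $(r,\theta^1,\dots,\theta^n)$ this says precisely that $X$ has no $\partial/\partial r$ component, i.e. $X=\sum_{i=1}^n X^i(r,\theta)\,\partial/\partial\theta^i$. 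It then remains only to show that the coefficients $X^i$ are independent of $r$.

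For this I would pass through the exponential map. Because $\phi_t$ is an isometry fixing $p_0$, the naturality of geodesics under isometries gives $\phi_t\circ Exp_{p_0}=Exp_{p_0}\circ (d\phi_t)_{p_0}$ on $T_{p_0}M$. Each $A_t:=(d\phi_t)_{p_0}$ lies in the orthogonal group $O(T_{p_0}M)$ and $\{A_t\}$ is a one-parameter subgroup, so $A_t=e^{tA}$ for a fixed skew-symmetric endomorphism $A$ of $T_{p_0}M$. Now the polar coordinates on $M$ are, by construction, the images under $Exp_{p_0}$ of the Euclidean polar coordinates on $T_{p_0}M$: a point with coordinates $(r,\theta)$ is $Exp_{p_0}(r\,\omega(\theta))$, where $\omega(\theta)$ is the corresponding unit vector. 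Since $A_t$ is orthogonal it preserves norms, so $A_t(r\,\omega)=r\,(A_t\omega)$ with $A_t\omega$ again a unit vector. Translating through $Exp_{p_0}$, the isometry $\phi_t$ acts in polar coordinates by $r\mapsto r$ and $\omega\mapsto A_t\omega$; in particular its action on the angular variable $\theta$ depends only on $\theta$ (and on $t$), never on $r$.

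Finally I would differentiate this flow at $t=0$. The map $\theta\mapsto A_t\omega(\theta)$ is the same for every value of $r$, so its infinitesimal generator, which is exactly the angular part of $X$, has components $X^i$ that are functions of $\theta$ alone, while the radial component vanishes as already shown. This yields $X=\sum_{i=1}^n X^i(\theta)\,\partial/\partial\theta^i$, as claimed.

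I do not expect a serious obstacle here; the content reduces to two standard inputs, namely completeness of Killing fields on a complete manifold and the naturality identity $\phi_t\circ Exp_{p_0}=Exp_{p_0}\circ(d\phi_t)_{p_0}$, together with the elementary observation that an orthogonal linear map acts on Euclidean polar coordinates by fixing the radius and rotating the sphere uniformly in $r$. The one point requiring care is the clean identification of the geodesic polar coordinates on $M$ with the Euclidean polar coordinates on $T_{p_0}M$ via $Exp_{p_0}$, so that the $r$-independence of the linear orthogonal action transfers faithfully to the angular coordinates $\theta$ on $M$.
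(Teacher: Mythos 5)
Your proof is correct, and it reaches the conclusion by a genuinely different route than the paper. Both arguments start from the same two inputs recorded just above the lemma: Killing fields on the complete manifold $(M,g)$ integrate to global one-parameter isometry groups, and every isometry fixes the spherical center of gravity $p_0$, so the flow preserves each geodesic sphere $S_{p_0}(r)$ and $X$ has no $\partial/\partial r$ component. From there the paper finishes by a coordinate computation: in Gauss polar coordinates $g=dr^2+g_{ij}\,d\theta^i d\theta^j$ it computes $\Gamma^j_{0i}=\frac{1}{2}g^{jk}\partial_r g_{ik}$ and reads off from the $(0,b)$-components of the Killing equation $\nabla_aX_b+\nabla_bX_a=0$ the identity $\sum_k g_{bk}\,\partial_r X^k=0$, hence $\partial_r X^k=0$ since $(g_{bk})$ is invertible. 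You instead work globally: using $\phi_t\circ Exp_{p_0}=Exp_{p_0}\circ(d\phi_t)_{p_0}$ --- valid because $\phi_t(p_0)=p_0$ and isometries preserve geodesics --- together with the fact that $A_t=(d\phi_t)_{p_0}$ is a one-parameter group of orthogonal maps, you conclude that in polar coordinates $\phi_t(r,\theta)=(r,\Theta_t(\theta))$ with $\Theta_t$ independent of $r$, and then differentiate at $t=0$. Your route buys a stronger structural statement essentially for free: the entire flow of $X$ is the $Exp_{p_0}$-conjugate of a linear rotation group $e^{tA}$ with $A$ skew-symmetric, so $X$ is the push-forward of a linear vector field on $T_{p_0}M$; the paper's route is a short, self-contained tensor computation that needs nothing beyond the local form of the metric in polar coordinates. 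The one point of care in your argument, which you flag yourself, is that $\Theta_t$ is defined through an angular chart on the unit sphere of $T_{p_0}M$; since only the derivative at $t=0$ is needed, this is harmless.
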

\begin{proof}
 A Killing vector field $X$ on $(M, g)$ must vanish at $p_0$ and it is orthogonal to $\frac{\partial}{\partial r}$ at any point $q\in M\setminus\{p_0\}$. Therefore, $X$ has the form
 \begin{align*}
 X=\sum_{i=1}^nX^i(r,\theta)\frac{\partial}{\partial \theta^i}.
 \end{align*}
 Under $(r, \theta)$, the metric $g$ has the expression
 \begin{align*}
 g=g_{00}dr^2+\sum_{i,j=1}^ng_{ij} d \theta^i d \theta^j,
 \end{align*}
 with $g_{00}=1$ and $g_{0i}=g_{i0}=0$ for $1\leq i \leq n$. The Christoffel symbols
 \begin{align*}
 &\Gamma_{00}^a(g)=\Gamma_{0a}^0(g)=\Gamma_{a0}^0(g)=0,\,\,\,0\leq a\leq n,\\
 &\Gamma_{0i}^j(g)=\frac{1}{2}\sum_{k=1}^ng^{jk}\frac{\partial}{\partial r}g_{ik},\,\,1\leq i \leq n.
 \end{align*}
 Let $X_a=\displaystyle\sum_{b=0}^ng_{ab}X^b$ for $0\leq a\leq n$ so that $X_0=0$. The condition
 \begin{align*}
 \nabla_aX_b+\nabla_bX_a=0,\,\,\,a,\,b=0,...,n,
 \end{align*}
 with $a=0$ and $b\geq 1$ gives
 \begin{align*}
 \frac{\partial}{\partial r}X_b+\frac{\partial}{\partial \theta^b}X_0 - 2\sum_{c=0}^n\Gamma_{0b}^cX_c=0.
 \end{align*}
 That is
 \begin{align*}
 \sum_{k=1}^ng_{bk}\frac{\partial}{\partial r}X^k=0,
 \end{align*}
 for $1\leq b\leq n$. We now have
 \begin{align*}
 \frac{\partial}{\partial r}X^k=0
 \end{align*}
 so that $X^k$ is independent of $r$ for $1\leq k \leq n$.
\end{proof}

\begin{defn}\label{defn_conformallycompactEinsteinmetric}
Suppose $\overline{M}$ is a smooth compact manifold with boundary, with $M$ its interior and $\partial M$ its boundary. A smooth defining function $x$ on $\overline{M}$ is a smooth function $x$ on $\overline{M}$ such that $x>0$ in $M$, $x=0$ and $dx\neq 0$ on $\partial M$. A complete Riemannian metric $g$ on $M$ is said to be conformally compact if there exists a smooth defining function $x$ such that $x^2g$ extends by continuity to a Riemannian metric( of class at least $C^0$) on $\overline{M}$. The rescaled metric $\bar{g}=x^2g$ is called a conformal compactification of $g$. If for some smooth defining function $x$, $\bar{g}$ is in $C^k(\overline{M})$ or the Holder space $C^{k,\alpha}(\overline{M})$, we say $g$ is conformally compact of class $C^k$ or $C^{k, \alpha}$. Moreover, if $g$ is also Einstein, we call $g$ a conformally compact Einstein metric. Also, for the restricted metric $\hat{g}=\bar{g}\big|_{\partial M}$, the conformal class $(\partial M, [\hat{g}])$ is called the conformal infinity of $(M, g)$. A defining function $x$ is called a geodesic defining function about $\hat{g}$ if $\hat{g}=\bar{g}\big|_{\partial M}$ and $|dx|_{\bar{g}}=1$ in a neighborhood of the boundary.
\end{defn}
Recall that for any smooth metric $h\in[\hat{g}]$ at the conformal infinity, there exists a unique geodesic defining function $x$ about $h$ in a neighborhood of $\partial M$, see \cite{Graham}. For a conformally compact Einstein metric of $C^2$, In \cite{CDLS}, based on \cite{Graham} the authors proved the following regularity result.

\begin{thm}
Assume $\overline{M}$ is a smooth compact manifold of dimension $n+1$, $n\geq 3$, with $M$ its interior and $\partial M$ its boundary. If $g$ is a conformally compact Einstein metric of class $C^2$ on $M$ with conformal infinity $(\partial M, [\gamma])$, and $\hat{g}\in [\gamma]$ is a smooth metric on $\partial M$. Then there exists  a smooth coordinates cover of $\overline{M}$ and a smooth geodesic defining $x$ corresponding to $\hat{g}$. Under this smooth coordinates cover, the conformal compactification $\bar{g}=x^2g$ is smooth up to the boundary for $n$ odd and has the expansion
\begin{align}\label{equn_expansion1}
\bar{g}=\,dx^2+g_x=\, dx^2+\hat{g}+x^2g^{(2)}+\,(\text{even powers})\,+x^{n-1} g^{(n-1)}+ x^{n}g^{(n)}+...
\end{align}
with $g^{(k)}$ smooth symmetric $(0,2)$-tensors on $\partial M$ such that for $2k<n$, $g^{(2k)}$ can be calculated explicitly inductively using the Einstein equations and $g^{(n)}$ is a smooth trace-free nonlocal term; while for $n$ even,  $\bar{g}$ is of class $C^{n-1}$, and more precisely it is polyhomogeneous and has the expansion
\begin{align}\label{equn_expansion2}
\bar{g}=\,dx^2+g_x=\, dx^2+\hat{g}+x^2g^{(2)}+\,(\text{even powers})\,+x^n\log(x) \tilde{g}+ x^{n}g^{(n)}+...
\end{align}
with $\tilde{g}$ and $g^{(k)}$ smooth symmetric $(0,2)$-tensors on $\partial M$, such that for $2k<n$, $g^{(2k)}$ and $\tilde{g}$ can be calculated explicitly inductively using the Einstein equations, $\tilde{g}$ is trace-free and $g^{(n)}$ is a smooth nonlocal term with its trace locally determined.
\end{thm}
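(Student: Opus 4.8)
The plan is to reduce the Einstein equation to a degenerate elliptic boundary value problem in a geodesic normal gauge and then run a formal-expansion-plus-bootstrap argument. First I would fix the representative $\hat{g}\in[\gamma]$ and produce the geodesic defining function $x$ by solving the eikonal equation $|dx|_{\bar g}^2=1$ with $\bar g|_{\partial M}=\hat g$; as recalled after Definition \ref{defn_conformallycompactEinsteinmetric} (following \cite{Graham}), such an $x$ exists and is uniquely determined near $\partial M$, and it puts the compactification into the normal form $\bar g=dx^2+g_x$ with $g_x$ a one-parameter family of metrics on $\partial M$ and $g_0=\hat g$. In this gauge the Einstein condition $\mathrm{Ric}(g)=-ng$ becomes, after the conformal change $g=x^{-2}\bar g$, an evolution system for $g_x$: a second-order equation in $x$ for $\partial_x g_x$, together with the trace and divergence (Hamiltonian and momentum) constraints inherited from the $dx\,dx$ and $dx\,d\theta$ components of the Einstein tensor.

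Next I would carry out the Fefferman--Graham formal analysis of this system. Substituting a series $g_x=\hat g+x^2 g^{(2)}+\dots$ and matching powers of $x$, the recursion is governed by an indicial operator whose action on the coefficient at order $m$ is proportional to $m(m-n)$. For even $m$ with $2k<n$ this is invertible, so each $g^{(2k)}$ is determined locally and universally from $\hat g$ (a polynomial in the curvature of $\hat g$ and its covariant derivatives), and only even powers occur below order $n$. The first failure of invertibility is at $m=n$: when $n$ is odd this produces the trace-free nonlocal term $g^{(n)}$, left free by the formal recursion but constrained to be trace-free by the Hamiltonian constraint; when $n$ is even the indicial root collides with the source, forcing a logarithmic term $x^n\log x\,\tilde g$ with $\tilde g$ the locally determined trace-free obstruction tensor, after which $g^{(n)}$ appears with locally determined trace but free trace-free part. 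This formal computation accounts for the shape of both $(\ref{equn_expansion1})$ and $(\ref{equn_expansion2})$.

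The substantive step, and the main obstacle, is to show that the \emph{actual} $C^2$ metric realizes this formal expansion with the stated regularity, rather than merely admitting it as an asymptotic series. Here I would follow the strategy of \cite{CDLS}: break the diffeomorphism invariance by imposing a harmonic (Bianchi) gauge relative to a background asymptotically hyperbolic metric, so that the gauged Einstein operator becomes elliptic and uniformly degenerate in a controlled way at $\partial M$. One then studies this operator on weighted H\"older spaces adapted to the conformally compact structure, using the mapping and regularity properties of the associated Laplace-type operator (as developed in \cite{GL} and its refinements). Starting from the assumed $C^2$ bound, a bootstrap---solving for successive expansion coefficients and applying boundary elliptic regularity at each stage---upgrades the metric to $C^\infty$ up to the boundary when $n$ is odd and to polyhomogeneous of class $C^{n-1}$, carrying the single log term at order $n$, when $n$ is even.

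The key difficulties I anticipate are: (i) verifying that the harmonic gauge can be reached from only $C^2$ data and that the gauged solution still solves the original Einstein equation, which requires a gauge-propagation argument using the Bianchi identity and uniqueness of constraint propagation; and (ii) the borderline elliptic estimates at the critical weight $x^n$, where the indicial roots coincide and the polyhomogeneous---rather than smooth---structure becomes unavoidable for $n$ even. Controlling this log term and showing that no higher powers of $\log x$ appear is precisely the delicate point that distinguishes \cite{CDLS} from the higher-regularity formal result of \cite{Graham}.
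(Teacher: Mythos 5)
The paper offers no proof of this statement: it is quoted as a known result, with the existence and uniqueness of the geodesic defining function taken from \cite{Graham} and the regularity statement and expansions $(\ref{equn_expansion1})$--$(\ref{equn_expansion2})$ taken from \cite{CDLS}. Your outline --- geodesic normal form via the eikonal equation, the Fefferman--Graham formal recursion with indicial factor $m(m-n)$ producing the trace-free nonlocal term ($n$ odd) or the obstruction tensor and single $x^n\log x$ term ($n$ even), followed by a harmonic (Bianchi) gauge and uniformly degenerate elliptic bootstrap to upgrade the $C^2$ metric to smooth, respectively polyhomogeneous, regularity --- is precisely the strategy of those cited works, so it takes essentially the same approach as the source on which the paper relies.
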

For instance, with $R_{\hat{g}}$ the scalar curvature and $R_{ij}(\hat{g})$ the Ricci curvature tensor of $\hat{g}$ we have
\begin{align}
g_{ij}^{(2)}=\frac{R_{\hat{g}}}{2(n-1)(n-2)}\hat{g}_{ij}-\frac{R_{ij}(\hat{g})}{n-2}.
\end{align}

Let $(M^{n+1}, g)$ be a Cartan-Hadamard manifold which is conformally compact Einstein with conformal infinity $(\partial M, [\hat{g}])$.  Let $x$ be the
smooth geodesic defining function. Near infinity we use the local coordinates $(x^0, x^1,..., x^n)=(x, x^1, ..., x^n)=(x,y)$, with $(x^1, ..., x^n)$ local coordinates on $\partial M$. Under the local coordinates, the metric can be expressed as
\begin{align}\label{equn_compactificationmetric}
g=x^{-2}(dx^2+g_x)=x^{-2}(dx^2+\sum_{i,j\geq 1}h_{ij}(x,y)dx^idx^j).
\end{align}
We have the useful proposition:
\begin{prop}\label{prop_vector_extension}
(Proposition 4.1., \cite{Wang}) Let $X$ be a Killing vector field on $(M, g)$. Then $X$ extends to a smooth vector field on $\overline{M}$ whose restriction on $\partial M$ is a conformal Killing vector field of $(\partial M, [\hat{g}])$.
\end{prop}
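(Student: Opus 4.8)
The plan is to reduce the Killing equation on $(M,g)$ to a conformal Killing equation for the smooth compactification $\bar g = x^2 g$ and then to read off the boundary behaviour componentwise. Writing $g = x^{-2}\bar g$ and using $\mathcal L_X g = 0$, one has the exact identity
\[
\mathcal L_X \bar g = \mathcal L_X(x^2 g) = 2x\,X(x)\,g = \frac{2X(x)}{x}\,\bar g,
\]
so $X$ is a conformal Killing field of $\bar g$ with conformal factor $\omega = X(x)/x$. In the collar coordinates $(x,y^1,\dots,y^n)$ of (\ref{equn_compactificationmetric}) I would write $X = f\,\partial_x + \sum_i X^i\,\partial_{y^i}$, where $f = X(x) = X^0$. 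Since $\bar g = dx^2 + h_{ij}\,dy^idy^j$ is smooth up to $\partial M$ (here the Einstein hypothesis enters only through the regularity of the expansion (\ref{equn_expansion1}) for $n$ odd), the coefficients of the resulting system are regular up to the boundary, and the whole argument is local in this collar.

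The first key step is the $(00)$-component of $\mathcal L_X \bar g = 2\omega \bar g$, which reads $\partial_x f = \omega = f/x$. Hence $\partial_x(f/x) = 0$, so $f/x$ is independent of $x$: there is a function $c(y)$ on $\partial M$ with $f = X^0 = c(y)\,x$. This already forces the normal component to vanish to first order at the boundary and to extend, and it pins down the limiting conformal factor $\omega \to c(y)$. I emphasise that no a priori boundedness of $X$ is assumed here; the decay $X^0 = O(x)$ is a consequence of the Killing equation rather than an input. Next I would use the $(0i)$-components, which give $h_{ij}\partial_x X^j = -\partial_{y^i} f = -x\,\partial_{y^i} c$, i.e. $\partial_x X^j = -x\,h^{ji}\partial_{y^i} c = O(x)$. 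Integrating in $x$ shows each tangential component extends continuously to the boundary, $X^j(x,y) = a^j(y) + O(x^2)$ with $a^j(y) = \lim_{x\to 0} X^j(x,y)$; bootstrapping these ordinary differential equations together with the smoothness of $h_{ij}$ up to $\partial M$ upgrades this to a smooth extension of $X$ to $\overline M$, whose boundary restriction is the purely tangential field $a = \sum_j a^j(y)\,\partial_{y^j}$.

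Finally I would take the $x\to 0$ limit of the $(ij)$-components of $\mathcal L_X \bar g = \frac{2f}{x}\bar g$. Using $f = c(y)\,x$ and $x\,\partial_x h_{ij}\to 0$, the left side tends to $(\mathcal L_a \hat g)_{ij}$ while the right side tends to $2c(y)\,\hat g_{ij}$, giving $(\mathcal L_a \hat g)_{ij} = 2c(y)\,\hat g_{ij}$. This is precisely the conformal Killing equation for $a$ on $(\partial M,[\hat g])$ with conformal factor $c$, which is the desired conclusion.

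The main obstacle is the extension and regularity step rather than any single computation: the componentwise identities above hold a priori only in the interior $x>0$, and turning them into genuine regularity up to the boundary requires the asymptotic expansion of the quoted theorem (smoothness of $\bar g$ for $n$ odd, and care with the $x^n\log x$ term of (\ref{equn_expansion2}) for $n$ even) to guarantee that $h_{ij}$, $h^{ij}$, and hence the right-hand sides of the normal ordinary differential equations, are regular enough for the solutions to inherit boundary regularity. Once this is handled the conformal Killing conclusion follows from the clean boundary trace computation, and as a consistency check the flow interpretation — that the one-parameter isometries $\phi_t$ generated by $X$ extend to conformal diffeomorphisms of $\overline M$ preserving $[\hat g]$ — is recovered by exponentiating the extended field $a$.
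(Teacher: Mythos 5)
Your proposal is correct and is essentially the standard argument behind this result: the paper itself does not reprove the proposition (it cites Proposition 4.1 of Wang), but the component identities you derive for $\mathcal{L}_X\bar{g}$ are exactly the formulas $(\ref{equn_component1})$--$(\ref{equn_component3})$ recorded in the paper, and the resulting normal form $X^0=c(y)\,x$, $X^j=a^j(y)-\int_0^x t\,h^{ji}(t,y)\,\partial_{y^i}c\,dt$ is precisely the form $(\ref{vect1_vert})$--$(\ref{vect1_tant})$ that the paper, following Wang, uses in the converse construction. Your regularity caveats (smoothness of $\bar{g}$ for $n$ odd, the $x^n\log x$ terms for $n$ even, so that the extension is only finitely differentiable in that case) are the right ones, so there is nothing to flag.
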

Based on Lemma \ref{lem_stableset} and Proposition \ref{prop_vector_extension}, one easily obtains that
\begin{lem}\label{lem_zeroset}
Let $X$ be a Killing vector field on $(M, g)$. Then for any two zero points $p_1, p_2$ of $X$, the geodesic crossing $p_1$ and $p_2$ must be a set of zero points of $X$. Therefore, the set of zero points of $X$ is either the single point $p_0$ which is the {\it spherical center of gravity} of $(M, g)$, or the closure of a smooth complete sub-manifold $Exp_{p_0}(S)$ of $M$ on $\overline{M}$, where $Exp_{p_0}$ is the exponential map at $p_0$ and $S$ is a linear subspace of the tangent space $T_{p_0}M$.
\end{lem}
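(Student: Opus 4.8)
The plan is to translate the statement about zeros of the Killing field $X$ into a statement about the common fixed points of its isometry flow, and then read off both assertions from Lemma \ref{lem_stableset} together with the fact that the exponential map at $p_0$ is a diffeomorphism.

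First I would let $\{F_t\}_{t\in\Real}$ denote the one-parameter group of isometries generated by $X$; this is globally defined because $(M,g)$ is complete. The elementary observation is that a point $p$ is a zero of $X$ precisely when the integral curve of $X$ through $p$ is constant, i.e. precisely when $F_t(p)=p$ for every $t$ (if $X(p)=0$ then $p$ is an equilibrium of the flow, and conversely $X(p)=\frac{d}{dt}\big|_{t=0}F_t(p)=0$). Hence the zero set $Z(X)$ equals $\bigcap_{t}\mathrm{Fix}(F_t)$, the set of common fixed points of the flow.

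The first assertion is then immediate: if $p_1,p_2\in Z(X)$, then $p_1,p_2$ are fixed by every $F_t$, so by Lemma \ref{lem_stableset} the unique geodesic crossing $p_1$ and $p_2$ lies in $\mathrm{Fix}(F_t)$ for each $t$, and therefore in $Z(X)$. For the structure statement I would use that $p_0$, being a common fixed point of all isometries of $(M,g)$, lies in every $\mathrm{Fix}(F_t)$ and hence in $Z(X)$, and that by Lemma \ref{lem_stableset} each $\mathrm{Fix}(F_t)$ has the form $Exp_{p_0}(S_t)$ for a linear subspace $S_t\subset T_{p_0}M$ (with $S_t=\{0\}$ in the degenerate case $\mathrm{Fix}(F_t)=\{p_0\}$), all centered at the same intrinsic point $p_0$. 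Since $(M,g)$ is Cartan-Hadamard, $Exp_{p_0}$ is a diffeomorphism of $T_{p_0}M$ onto $M$, so intersections commute with it and $Z(X)=\bigcap_t Exp_{p_0}(S_t)=Exp_{p_0}\big(\bigcap_t S_t\big)=Exp_{p_0}(S)$, where $S:=\bigcap_t S_t$ is again a linear subspace. This exhibits the interior part of the zero set as a single totally geodesic submanifold through $p_0$.

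Finally, to account for the boundary I would invoke Proposition \ref{prop_vector_extension}: $X$ extends smoothly to $\overline{M}$ and restricts to a conformal Killing field of $(\partial M,[\hat g])$, so the zero set of the extended field is well defined on $\overline{M}$ and its boundary zeros are exactly the limit points on $\partial M$ of the complete submanifold $Exp_{p_0}(S)$. Thus the full zero set on $\overline{M}$ is the closure of $Exp_{p_0}(S)$, as claimed. The only step requiring care is this boundary bookkeeping, namely checking that the closure of $Exp_{p_0}(S)$ meets $\partial M$ precisely in the zeros of the induced conformal Killing field; since the interior reduces entirely to the diffeomorphism property of $Exp_{p_0}$ and Lemma \ref{lem_stableset}, the overall argument is short, consistent with the paper's assertion that the lemma is easily obtained.
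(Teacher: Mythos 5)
Your proposal is correct and follows essentially the same route as the paper, which simply cites Lemma \ref{lem_stableset} together with Proposition \ref{prop_vector_extension} and leaves the details to the reader: you identify the zero set of $X$ with the common fixed-point set of its flow, apply Lemma \ref{lem_stableset} to each $F_t$, and use injectivity of $Exp_{p_0}$ to intersect the subspaces $S_t$, then invoke Proposition \ref{prop_vector_extension} for the closure statement on $\overline{M}$. In fact your intersection argument $\bigcap_t Exp_{p_0}(S_t)=Exp_{p_0}\bigl(\bigcap_t S_t\bigr)$ supplies a rigorous step the paper glosses over, and the remaining boundary bookkeeping you flag can be settled by noting that isometries fixing $p_0$ act on $\partial M \cong UT_{p_0}M$ through their differentials at $p_0$, so the boundary zeros of the extended field are exactly the unit vectors in $S$.
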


We now choose $M$ to be the interior of the manifold with boundary $\overline{M}$ which is diffeomorphic to the closed unit ball in the Euclidean space $\mathbb{R}^{n+1}$, and $(M^{n+1}, g)$ is a Cartan-Hadamard manifold which is conformally compact Einstein, with $(\partial M, [\hat{g}])$ the conformal infinity. Let $x$ be the smooth geodesic defining function. Let $Y$ be a conformal Killing vector field on $(\partial M, \hat{g})$. We will show that $Y$ can be extended to a Killing vector field in $(M, g)$.




\section{Killing Vector Fields on Non-positively Curved Conformally Compact Einstein Manifolds}

\begin{defn}
Let $(M,g)$ be a conformally compact Einstein manifold and $x$ is a defining function. A vector field $V$ on $\overline{M}$ is asymptotically Killing if the Lie derivative $L_V g= O(x^{n-2})$ 
as $x\to 0$. That is to say, $|L_Vg|_g=O(x^n)$.
\end{defn}

In \cite{Wang}, it is proved that

\begin{thm}\label{thm_baseVector}
(Theorem 4.1.,\cite{Wang}) Let $M^{n+1}$ be the interior of a closed manifold $\overline{M}$ with boundary $\partial M$. Assume $(M^{n+1}, g)$ is a Cartan-Hadamard manifold which is conformally compact Einstein. Let $x$ be a geodesic defining function. For any asymptotic Killing vector field $V$ on $\overline{M}$, there is a Killing vector field $X$ such that $X\big|_{\partial M}=V\big|_{\partial M}$. Moreover, if $V$ is smooth up to $\partial M$, $X$ is at least of $C^{n+1}$ up to $\partial M$ with the expansion
\begin{align*}
X=\sum_{k=0}^nX^k(x,y)\frac{\partial}{\partial x^k}=\sum_{k=0}^n(\sum_{m=0}^{n+1}(X^k)^{(m)}(y)x^m+o(x^{n+1}))\frac{\partial}{\partial x^k},
\end{align*}
with $(X^k)^{(m)}(y)$ a smooth function on the boundary which can be solved explicitly inductively for $0\leq m\leq n+1$ under the local coordinate $(x^0,x^1,...,x^n)=(x,y)$ near the boundary and $(X^0)^{(0)}=0$. Here $y=(x^1,..., x^n)$ is a local coordinate on the boundary. 
\end{thm}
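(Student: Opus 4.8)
The plan is to convert the overdetermined Killing equation $L_Xg=0$ into a well-posed elliptic boundary value problem, solve it first by a formal expansion matching the prescribed boundary data $V|_{\partial M}$, and then correct the formal solution by a genuinely decaying term. I would introduce the Killing operator $\mathcal{D}X=\tfrac12 L_Xg$ and its formal adjoint, the divergence $\mathcal{D}^\ast$. Since $g$ is Einstein with $\mathrm{Ric}=-ng$, a Killing field satisfies the elliptic equation $\nabla^\ast\nabla X=\mathrm{Ric}(X)=-nX$ together with $\mathrm{div}\,X=0$; conversely, a decaying solution of this system is Killing. I would therefore set $P=\nabla^\ast\nabla-\mathrm{Ric}$, look for $X=V+Z$ with $Z\to 0$ at $\partial M$ so that automatically $X|_{\partial M}=V|_{\partial M}$, and reduce the problem to solving $PZ=-PV$ with $Z$ decaying, deferring the divergence constraint to a self-improvement step.

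Next I would construct a good approximate solution $X_0$. Writing $X=\sum_k X^k(x,y)\partial_{x^k}$ and expanding $L_Xg$ in powers of $x$ by means of the metric expansions $(\ref{equn_expansion1})$--$(\ref{equn_expansion2})$, the vanishing of $L_Xg$ becomes a hierarchy of equations for the coefficients $(X^k)^{(m)}(y)$. The indicial analysis of $\mathcal{D}$ at the conformal boundary shows that, apart from the exceptional orders $0$ and $n$, each coefficient is determined recursively from the lower ones through the Einstein equations; the requirement that $X$ restrict to a tangential conformal Killing field forces $(X^0)^{(0)}=0$, in agreement with Proposition \ref{prop_vector_extension}. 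The asymptotically Killing hypothesis $L_Vg=O(x^{n-2})$ is exactly the statement that the obstruction living at the critical order $x^n$ vanishes, so the recursion proceeds and pins down $(X^k)^{(m)}$ for all $0\le m\le n+1$. This yields $X_0$, smooth up to the boundary, with $X_0|_{\partial M}=V|_{\partial M}$ and $L_{X_0}g=O(x^N)$ for $N$ as large as the regularity of the expansion permits.

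I would then seek the exact solution as $X=X_0+Z$ and solve $PZ=-PX_0$ with $Z$ in a weighted Hölder space $x^{\delta}C^{k,\alpha}(\overline M)$ of decaying fields. Here non-positive curvature enters through injectivity: for any Killing field one has $\nabla^\ast\nabla X=\mathrm{Ric}(X)=-nX$, and for a decaying field the boundary terms vanish, so integration gives $\int_M|\nabla X|^2=-n\int_M|X|^2$, forcing $X\equiv 0$; thus $P$ has no decaying kernel, and the same Bochner pairing controls the cokernel. Combined with the Fredholm theory for the uniformly degenerate (conformally compact) operator $P$, whose indicial roots I would compute and check avoid the chosen weight $\delta$, this produces the desired isomorphism and hence a unique decaying $Z$.

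Finally I would upgrade the solution of $PX=0$ to an exact Killing field. Applying the Bianchi operator $\mathcal{D}^\ast$ to $\mathcal{D}X$ and using $PX=0$ together with the contracted second Bianchi identity on the Einstein background, the function $\mathrm{div}\,X$ is seen to satisfy a homogeneous second order elliptic equation with the same decay, so the Bochner/uniqueness argument forces $\mathrm{div}\,X=0$; then $PX=0$ reduces to $\mathcal{D}^\ast\mathcal{D}X=0$, whose pairing with $X$ gives $\int_M|\mathcal{D}X|^2=0$, i.e. $L_Xg=0$. The $C^{n+1}$ regularity and the stated polyhomogeneous expansion then follow from elliptic boundary regularity for $P$, exactly as in the regularity theory underlying $(\ref{equn_expansion1})$--$(\ref{equn_expansion2})$. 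The \emph{main obstacle} is the boundary analysis in the previous step: pinning down the indicial roots of the conformally compact operator $P$ so that the critical decay rate dictated by the asymptotically Killing condition $O(x^{n-2})$ is admissible, and then running the self-improvement argument that turns a solution of the gauge-fixed elliptic equation into an exact Killing field. Non-positive curvature is essential throughout, since it is what guarantees the global injectivity that makes $P$ an isomorphism rather than merely Fredholm.
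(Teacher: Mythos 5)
Your overall architecture --- write $X=V+Z$, solve a gauge-fixed elliptic equation $PZ=-PV$ for a decaying $Z$ via conformally compact (edge) elliptic theory, then upgrade the solution of $PX=0$ to a genuine Killing field --- is the same as in Wang's proof, which this paper does not reprove but quotes and sketches (solve $(\Delta_g-n)\tilde Z=-(\Delta_g-n)\tilde V$ with $Z_i=O(x^{s+1})$, then apply ``integration formulae and inequalities''). However, your final self-improvement step has a genuine gap. You pair $\mathcal{D}^*\mathcal{D}X=0$ with $X$ and conclude $\int_M|\mathcal{D}X|^2=0$. That integration by parts produces a boundary flux of the form $\int_{\{x=\epsilon\}}\langle\mathcal{D}X(\nu,\cdot),X\rangle\,dA_g$, and $X$ does \emph{not} decay: its boundary value is $V|_{\partial M}\neq 0$, so $|X|_g\sim x^{-1}$, while a priori one only knows $|\mathcal{D}X|_g=O(x^{s+2})$ and $dA_g\sim x^{-n}$. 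The flux is therefore of size $\epsilon^{s+1-n}$, which does not tend to zero for any admissible $s$ --- not even under Wang's original hypothesis $s=n-2$, let alone the relaxed $s>\frac n2-2$ --- and a cutoff version fails in the same way through the cross term. The correct argument must work at the level of the tensor $h=L_Xg$ itself: once $\operatorname{div}X=0$ (your argument for that step is fine, since $\operatorname{div}X=\tfrac12\operatorname{tr}_gL_Xg$ does decay), $h$ is transverse-traceless and satisfies a Lichnerowicz-type equation, and one pairs \emph{that} equation with $h$; both factors then decay like $x^{s+2}$, the boundary flux is $O(\epsilon^{2s+4-n})$, and this is exactly where the threshold $s>\frac n2-2$ quoted in the paper comes from.

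This repair exposes a second error: you have non-positive curvature doing the wrong job. Injectivity of $P=\nabla^*\nabla-\operatorname{Ric}=\nabla^*\nabla+n$ on decaying fields is automatic from the Einstein condition alone, since $P$ is a positive operator; no curvature sign enters there. Where non-positive sectional curvature is genuinely consumed is in the pairing of the equation for $h$ with $h$: a curvature term $R_{ipjq}h^{ij}h^{pq}$ appears, and its sign is controlled only by combining $K\leq 0$ with the Einstein condition (diagonalizing $h$, using $\lambda_a\lambda_b\leq\tfrac12(\lambda_a^2+\lambda_b^2)$ and $\sum_{b}R_{abab}=-n$ gives $R_{ipjq}h^{ij}h^{pq}\geq -n|h|^2$ when all sectional curvatures are $\leq 0$). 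Your divergence-free reduction never meets this term precisely because the invalid pairing with $X$ hides it. So, as written, the proposal does not close: the step that makes the theorem true, and the step where the Cartan-Hadamard hypothesis is actually used, is missing.
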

By Theorem \ref{thm_baseVector}, to show that $Y$ can be extended to a Killing vector field in $(M, g)$, we only need to extend $Y$ to an asymptotically Killing vector field on $\overline{M}$. In fact, Theorem \ref{thm_baseVector} still holds if we assume the decay rate to be $L_V g= O(x^{s})$ 
for some $\frac{n}{2}-2 < s < n-1$ in the definition of the {\em asymptotically Killing vector field} $V$. That is, $|L_Vg|_g =O(x^{s+2})$. Indeed, for this issue there are two places involved in the proof of Theorem 4.1. in \cite{Wang}:

First, to construct the Killing vector field, we need to use elliptic edge operator theory to solve a vector field $Z$ satisfying
\begin{align*}
\Delta_gZ-n Z=-(\Delta V - n V).
\end{align*}
Let $\tilde{V}=g_{ij}V^i\, dx^j$ be the $1$-form corresponding to the vector $V=V^i\,\frac{\partial}{\partial x^i}$. Locally near the boundary, denote $\tilde{V}=V_i \,dx^i$. For any constant $-1 <s<n-1$, if $L_Vg=O(x^s)$, 
since $Ric_g=-ng$, we have
\begin{align*}
(\Delta_g - n)V_i =\nabla^p\nabla_pV_i+\nabla^p\nabla_iV_p- \nabla_i\text{div}_gV=(\delta L_Vg)_i- \nabla_i\text{div}_gV = O(x^{s+1}).
\end{align*}
Here we have use the fact $\text{div}_gV=\frac{1}{2}g^{ij}L_Vg_{ij}$. Then by elliptic edge operator theory on conformally compact manifolds with non-positive sectional curvature e.g., see \cite{Lee} \cite{Mazzeo1}, there exists a $1$-form $\tilde{Z}$ solving the equation
\begin{align*}
(\Delta_g-n)\tilde{Z}=(\Delta_g - n)\tilde{V},
\end{align*}
where $\tilde{Z}=Z_i\,dx^i$ with $Z_i=O(x^{s+1})$. Here $Z_i$ has smooth expansion about $x$ with possible $log(x)$ terms since the order $x^n\log(x)$( it is polyhomogeneous), and so it is of at least $C^{n-1, \alpha}$ up to the boundary for $0 < \alpha < 1$ with smooth coefficients of $x^k$ for $k\leq n-1$ and of the possible term $x^n\log(x)$ which can be solved explicitly. Let $Z=g^{ij}Z_i\,\frac{\partial}{\partial x^j}$ be the vector field corresponding to $\tilde{Z}$, then $Z$ is at least of $C^{n+1, \alpha}(\overline{M})$.

Second, to show the vector field $X=V+Z$ is the Killing vector field in need, the author used some integration formulae and inequalities, which only requires that $s>\frac{n}{2}-2$.

Therefore, for a given conformal Killing vector field $Y$ on $(\partial M, [\hat{g}])$, in order to find a Killing vector field $X$ on $(M, g)$ so that $X\big|_{\partial M}=Y$, we only need to construct a vector field $Z$ on $\overline{M}$ such that $Z\big|_{\partial M}=Y$ and the Lie derivative
\begin{align}\label{decay_indices}
L_Zg=O(x^s)
\end{align}
with some $s>\frac{n}{2}-2$.

Let $\hat{g}$ be a representation of the conformal infinity and $x$ be the geodesic defining function about $\hat{g}$. Let $Z$ be a smooth vector field on $\overline{M}$ such that $Z\big|_{\partial M} = Y$. Assume that $Z$ is expressed as
\begin{align}\label{equn_vector1}
Z=a(x,y)\frac{\partial}{\partial x}+ \sum_{i\geq 1}b^i(x,y)\frac{\partial}{\partial x^i}
\end{align}
in the coordinate $(x, y)=(x,x^1,...,x^n)$ near $\partial M$. Therefore,
\begin{align*}
&a(0,y)=0,\\
&Y =Y^k \frac{\partial}{\partial x^k}=b^k(0, y) \frac{\partial}{\partial x^k}.
\end{align*}
As in \cite{Wang}, a direct calculation gives the formulae
\begin{align}
&\label{equn_component1}L_Zg(\frac{\partial}{\partial x}, \frac{\partial}{\partial x})=2x^{-2}(\frac{\partial}{\partial x}a(x,y) -\frac{a(x,y)}{x}),\\
&\label{equn_component2}L_Zg(\frac{\partial}{\partial x}, \frac{\partial}{\partial x^i})=x^{-2}(\frac{\partial}{\partial x}b^j(x,y)h_{ij}+ \frac{\partial a(x,y)}{\partial x^i}),\\
&\label{equn_component3}L_Zg(\frac{\partial}{\partial x^i}, \frac{\partial}{\partial x^j})=(\frac{\partial b^k}{\partial x^i}+b^s\Gamma_{is}^k)\frac{h_{kj}}{x^2}+(\frac{\partial b^k}{\partial x^j}+ b^s \Gamma_{js}^k)\frac{h_{ki}}{x^2}-\frac{2 a}{x^3}h_{ij}+\frac{a(x,y)}{x^2}\frac{\partial h_{ij}}{\partial x},
\end{align}
where $\Gamma_{ij}^k$ is the Christoffel symbol of the metric $g_x=h_{ij} dx^idx^j$ in $(\ref{equn_compactificationmetric})$. We use the form of $Z$ in Proposition 4.1 in \cite{Wang} i.e., let
\begin{align}
&\label{vect1_vert}a(x,y)=x\,a_0(y),\\
&\label{vect1_tant}b^k(x,y)=b^k(0, y)-\int_0^x\sum_{i\geq 1}\,t\,\frac{\partial a_0(y)}{\partial x^i}\,h^{ik}(t, y) dt,
\end{align}
where $a_0(y)=\frac{1}{n}\text{div}_{\hat{g}}Y$. It is easy to check that $(\ref{equn_component1})$ and $(\ref{equn_component2})$ vanish identically. So we only need to handle $(\ref{equn_component3})$. We will use the expansion $(\ref{equn_expansion1})$ and $(\ref{equn_expansion2})$ of the metric $g_x$ near the boundary and that
\begin{align*}
L_Y\hat{g}=\frac{\text{div}_{\hat{g}}Y}{n}\hat{g}
\end{align*}
to show that the right hand side of $(\ref{equn_component3})$ is of order $O(x^{n-2})$.


By substituting $(\ref{vect1_vert})$ and $(\ref{vect1_tant})$ to $(\ref{equn_component3})$ we have
\begin{align*}
&L_Zg(\frac{\partial}{\partial x^i}, \frac{\partial}{\partial x^j})\\
=\,&\big[\,\frac{\partial b^k(0,y)}{\partial x^i}-\int_0^x \frac{\partial}{\partial x^i}(\frac{\partial a_0(y)}{\partial x^s}\,h^{sk}(t,y))\,t\,dt+\big(b^s(0,y)- \int_0^x \frac{\partial a_0(y)}{\partial x^m}\,h^{ms}(t,y)\,t\,dt\big)\,\Gamma_{is}^k(g_x)\,\big]\frac{h_{kj}}{x^2}\\
+\,&\big[\,\frac{\partial b^k(0,y)}{\partial x^j}-\int_0^x \frac{\partial}{\partial x^j}(\frac{\partial a_0(y)}{\partial x^s}\,h^{sk}(t,y))\,t\,dt+\big(b^s(0,y)- \int_0^x\frac{\partial a_0(y)}{\partial x^m}\,h^{ms}(t,y)\,t\,dt\big)\,\Gamma_{js}^k(g_x)\,\big]\frac{h_{ki}}{x^2}\\
-\,&\frac{2 a_0(y)}{x^2}h_{ij}+\frac{a_0(y)}{x}\frac{\partial h_{ij}}{\partial x}.
\end{align*}
Then substituting the expansion $(\ref{equn_expansion1})$ and $(\ref{equn_expansion2})$ of the metric $g_x$ we have
\begin{align*}
&L_Zg(\frac{\partial}{\partial x^i}, \frac{\partial}{\partial x^j})\\
&=x^{-2}(\nabla_i^{\hat{g}}b^k(0,y)\hat{g}_{kj}+\nabla_j^{\hat{g}}b^k(0,y)\hat{g}_{ki}- 2a_0(y)\hat{g}_{ij})\\
&+\nabla_i^{\hat{g}}b^k(0,y)g^{(2)}_{kj}+ \nabla_j^{\hat{g}}b^k(0,y)g^{(2)}_{ki}-\nabla_i^{\hat{g}}\nabla_j^{\hat{g}}a_0(y)+b^s(0,y)\nabla_s^{\hat{g}}g_{ij}^{(2)}+O(x)\\ 
&=x^{-2}L_Y(\hat{g}(\frac{\partial}{\partial x^i}, \frac{\partial}{\partial x^j}))+\nabla_i^{\hat{g}}b^k(0,y)g^{(2)}_{kj}+ \nabla_j^{\hat{g}}b^k(0,y)g^{(2)}_{ki}-\nabla_i^{\hat{g}}\nabla_j^{\hat{g}}a_0(y)+b^s(0,y)\nabla_s^{\hat{g}}g_{ij}^{(2)}+O(x)\\
&=\nabla_i^{\hat{g}}b^k(0,y)g^{(2)}_{kj}+ \nabla_j^{\hat{g}}b^k(0,y)g^{(2)}_{ki}-\nabla_i^{\hat{g}}\nabla_j^{\hat{g}}a_0(y)+b^s(0,y)\nabla_s^{\hat{g}}g_{ij}^{(2)}+O(x)\\
&=\frac{1}{n-2}[\frac{R_{\hat{g}}}{2(n-1)}L_Y\hat{g}_{ij}+\frac{L_YR_{\hat{g}}}{2(n-1)}g_{ij}-L_YR_{ij}(\hat{g})]+O(x).
\end{align*}
Here we have used that $Y$ is a conformal Killing vector field. To simplify the calculation, we use the following theorem, see \cite{Obata1} etc.

\begin{thm}\label{thm_isometry_rigidity}
(Obata, \cite{Obata1}) Let $(N, g)$ be a closed smooth Riemannian manifold of dimension $n$, which is not conformally equivalent to the round sphere. There exists a smooth metric $h\in [g]$ so that all smooth conformal Killing vector fields $X$ on $(N, g)$ are Killing vector fields on $(M, h)$.
\end{thm}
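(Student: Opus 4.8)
The plan is to produce the metric $h$ by averaging a representative of the conformal class over the conformal group, with the whole argument hinging on that group being compact — which is precisely where the hypothesis that $(N,g)$ is not conformally the round sphere enters. First I would recast the conclusion group-theoretically. A smooth conformal Killing vector field $X$ generates a one-parameter flow $\{\psi_t\}$ of conformal diffeomorphisms of $(N,[g])$, and $X$ is a Killing field of a metric $h\in[g]$ if and only if each $\psi_t$ is an isometry of $h$. Since the conformal Killing fields are exactly the Lie algebra of the conformal group $G=\mathrm{Conf}(N,[g])$ and generate its identity component, it suffices to find a single metric $h\in[g]$ invariant under $G$; such an $h$ automatically makes every conformal flow an isometry, hence every conformal Killing field a Killing field.

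The averaging step is then routine, \emph{provided} $G$ is compact. Fix any reference metric $g_0\in[g]$. Since each $\psi\in G$ is conformal, $\psi^*g_0=e^{2u_\psi}g_0$ for a smooth function $u_\psi$, and the chain rule yields the cocycle identity $u_{\psi\phi}=(u_\psi\circ\phi)+u_\phi$. Let $\mu$ be the normalized bi-invariant Haar measure on the compact group $G$ and set $\bar{u}=\int_G u_\psi\,d\mu(\psi)$. Using right-invariance of $\mu$ one computes $\bar{u}\circ\phi=\int_G(u_{\psi\phi}-u_\phi)\,d\mu(\psi)=\bar{u}-u_\phi$, that is $(\bar{u}\circ\phi)+u_\phi=\bar{u}$ for every $\phi\in G$. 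Setting $h=e^{2\bar{u}}g_0$, this is exactly the assertion $\phi^*h=(e^{2\bar{u}}\circ\phi)\,e^{2u_\phi}g_0=e^{2\bar{u}}g_0=h$, so $h$ is $G$-invariant. Combined with the reduction above, every conformal Killing field of $(N,g)$ is a Killing field of $(N,h)$.

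The heart of the matter, and the main obstacle, is to prove that $G$ is compact, which is the content of the Ferrand–Obata theorem (cf. \cite{Obata1}): the conformal group of a closed Riemannian manifold is compact unless the manifold is conformally diffeomorphic to the round sphere, whose conformal group is the non-compact Möbius group. I would argue by contradiction, choosing $\psi_k\in G$ with no convergent subsequence and studying them as a family of conformal maps. Using the conformal invariance of suitable geometric quantities together with a normal-families and equicontinuity argument, one shows that such a sequence must concentrate: there exist points $p,q\in N$ with $\psi_k\to q$ locally uniformly on $N\setminus\{p\}$. Transplanting the metric through this blow-up forces $(N,[g])$ to be conformally flat and to inherit the rigidity of the round sphere, contradicting the hypothesis. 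The delicate analytic estimates underlying this concentration phenomenon are the genuinely hard part; granting them, compactness of $G$ follows and the averaging of the previous paragraph completes the proof.
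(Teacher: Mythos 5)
The first thing to note is that the paper does not prove this statement at all: it is quoted as Obata's theorem, with \cite{Obata1} as the reference, so there is no internal proof to compare yours against; the comparison must be with the classical literature. Judged on that basis, your proposal splits the theorem into (a) compactness of the conformal group $G=\mathrm{Conf}(N,[g])$ when $(N,g)$ is not conformally the round sphere, and (b) a Haar-averaging argument producing a $G$-invariant metric in $[g]$. Part (b) is carried out correctly and completely: the cocycle identity $u_{\psi\phi}=u_\psi\circ\phi+u_\phi$, the computation $\bar u\circ\phi=\bar u-u_\phi$ via right-invariance of the Haar measure, and the conclusion $\phi^*h=h$ for $h=e^{2\bar u}g_0$ are all right (one could equally average the metrics $\psi^*g_0$ themselves, since each is a pointwise conformal multiple of $g_0$, so the average stays in the class and is manifestly invariant). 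The reduction from conformal Killing fields to invariance under the identity component of $G$ is also fine, since on a closed manifold conformal fields are complete and generate that component.

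The gap is in part (a). The compactness of $G$ for manifolds not conformally the sphere is not a lemma one can dispatch with a ``normal families and equicontinuity'' sketch; it is precisely the Ferrand--Obata theorem, which is logically equivalent, modulo your own averaging step, to the statement being proved: an invariant metric $h$ gives $G\subset\mathrm{Isom}(N,h)$, hence $G$ compact, and conversely compactness gives an invariant metric by averaging. So as a self-contained argument your proposal is circular at its core --- it reduces the cited theorem to an equivalent formulation of itself and leaves the genuinely hard analytic content unproven (Obata's own argument for the connected group is Lie-theoretic and curvature-based; the concentration picture you describe is closer to Lelong-Ferrand's and Schoen's proofs for the full group). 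If, on the other hand, you are permitted to cite the Ferrand--Obata compactness theorem as a black box --- which is defensible, since the paper itself simply cites \cite{Obata1} for the entire statement --- then your argument is a correct and standard derivation, and the averaging paragraph is a useful bridge that the paper leaves implicit.
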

By Theorem \ref{thm_isometry_rigidity}, we choose $\hat{g}$ the smooth metric in the conformal infinity so that $Y$ is a Killing vector field on $(\partial M, \hat{g})$. Let $x$ be the corresponding geodesic defining function. Then $a_0(y)=0$ on $\partial M$. Therefore,
\begin{align*}
L_Zg(\frac{\partial}{\partial x^i}, \frac{\partial}{\partial x^j})
=\,\big[\,\frac{\partial b^k(0,y)}{\partial x^i} + b^s(0,y)\,\Gamma_{is}^k(g_x)\,\big]\frac{h_{kj}}{x^2}
+\,\big[\,\frac{\partial b^k(0,y)}{\partial x^j}+\, b^s(0,y)\,\Gamma_{js}^k(g_x)\,\big]\frac{h_{ki}}{x^2}.
\end{align*}
Direct calculation leads to
\begin{align*}
&L_Y\Gamma_{ij}^k(\hat{g})=\nabla_i^{\hat{g}}\nabla^{\hat{g}}_jY^k+R^k_{jim}(\hat{g})Y^m=0,\\
&L_YR_{\hat{g}}=0,\,\,L_YR_{ij}(\hat{g})=0,\\
&L_YR^i_{jkl}(\hat{g})=0,
\end{align*}
and similarly Lie derivatives $L_Y$ of covariant derivatives $\nabla_{\hat{g}}^{(k)}$ on the intrinsic curvature tensors of any order $k$ vanish on $(\partial M, \hat{g})$, see \cite{Yano1}. Therefore,
\begin{align*}
L_Zg(\frac{\partial}{\partial x^i}, \frac{\partial}{\partial x^j})=O(x),
\end{align*}
and
\begin{align*}
&L_Yg_{ij}^{(2k)}=0,\,\,\,\text{for}\,\,\,2\leq 2k\leq n-1,\\
&L_Y\tilde{g}_{ij}=0,
\end{align*}
since the coefficients $g^{(2k)}$ and $\tilde{g}$ in the expansion $(\ref{equn_expansion1})$ and $(\ref{equn_expansion2})$ are covariant derivative terms of the intrinsic curvature tensors in $(\partial M, \hat{g})$ for $2\leq 2k \leq n-1$.

Using the expression
\begin{align*}
\Gamma_{is}^k(g_x)=\frac{1}{2}h^{kp}(\frac{\partial}{\partial x^i}h_{ps}+\frac{\partial}{\partial x^s}h_{ip}-\frac{\partial}{\partial x^p}h_{is}),
\end{align*}
 the expansion $(\ref{equn_expansion1})$, $(\ref{equn_expansion2})$ with $g_{ij}^{(0)}=\hat{g}_{ij}$, and the expansion of the inverse matrices $h^{ij}$ of $h_{ij}$
\begin{align*}
&h^{ij}=\bar{g}_{(0)}^{ij}+x^2\bar{g}_{(2)}^{ij}+(\text{even terms})+x^{n-1}\bar{g}_{(n-1)}^{ij}+O(x^n),\,\,\,\text{for}\,\,n\,\,\text{odd},\\
&h^{ij}=\bar{g}_{(0)}^{ij}+x^2\bar{g}_{(2)}^{ij}+(\text{even terms})+x^{n-2}\bar{g}_{(n-2)}^{ij}+\bar{\tilde{g}}^{ij}x^n\log(x)+O(x^n),\,\,\,\text{for}\,\,n\,\,\text{even},
\end{align*}
it is easy to calculate that  for $n\geq 3$,
\begin{align*}
L_Zg(\frac{\partial}{\partial x^i}, \frac{\partial}{\partial x^j})&=\,\sum_{1 \leq k <\frac{n}{2}}x^{2k} [L_Yg_{ij}^{(2k)}+Y^lT_{lij}(k,n)]+O(x^n)\\
&=\sum_{1 \leq k <\frac{n}{2}}x^{2k}\,Y^lT_{lij}(k,n)+O(x^n),
\end{align*}
for $n$ odd, while
\begin{align*}
L_Zg(\frac{\partial}{\partial x^i}, \frac{\partial}{\partial x^j})&=\, \sum_{1 \leq k <\frac{n}{2}}x^{2k}[L_Yg_{ij}^{(2k)}+Y^lT_{lij}(k,n)]+x^n\log(x)\, L_Y\tilde{g}_{ij}+O(x^n)\\
&=\sum_{1 \leq k <\frac{n}{2}}x^{2k}\,Y^lT_{lij}(k,n)+O(x^n),
\end{align*}
for $n$ even, where $T_{lij}(k,n)$ are $(0,3)$-tensors consisted by covariant derivatives of curvature tensors of $\hat{g}$ of order up to $2k-1$, which are determined by the coefficients of lower powers of $x$ in the expansion of $g_x$.
Note that by direct calculations, for $2\leq 2k\leq n-1$,
\begin{align*}
&T_{lij}(k,n)=g_{jp}^{(2k)}\Gamma_{il}^p(\hat{g})+g_{ip}^{(2k)}\Gamma_{jl}^p(\hat{g})+\hat{g}_{jm}\bar{g}_{(2k)}^{mq}\hat{g}_{pq}\Gamma_{il}^p(\hat{g})+\hat{g}_{im}\bar{g}_{(2k)}^{mq}\hat{g}_{pq}\Gamma_{jl}^p(\hat{g})\\
&+\frac{1}{2}\sum_{\substack{a+b+c=k,\\
k-1\geq a,b,c\geq 0}}[g^{(2c)}_{jm}\bar{g}_{(2a)}^{mq}(\frac{\partial}{\partial x^i}g_{ql}^{(2b)}+\frac{\partial}{\partial x^l }g_{iq}^{(2b)}-\frac{\partial}{\partial x^q }g_{il}^{(2b)})+g^{(2c)}_{im}\bar{g}_{(2a)}^{mq}(\frac{\partial}{\partial x^j}g_{ql}^{(2b)}+\frac{\partial}{\partial x^l }g_{jq}^{(2b)}-\frac{\partial}{\partial x^q }g_{jl}^{(2b)})]\\
&=\sum_{b=0}^{k-1}[A_j^q(b, k)(\frac{\partial}{\partial x^i}g_{ql}^{(2b)}+\frac{\partial}{\partial x^l }g_{iq}^{(2b)}-\frac{\partial}{\partial x^q }g_{il}^{(2b)})+A_i^q(b,k)(\frac{\partial}{\partial x^j}g_{ql}^{(2b)}+\frac{\partial}{\partial x^l }g_{jq}^{(2b)}-\frac{\partial}{\partial x^q }g_{jl}^{(2b)})],
\end{align*}
where
\begin{align*}
&A_j^q(b,k)=\sum_{\substack{a+c=k-b,\\
k-1\geq a,c\geq 0}}g^{(2c)}_{jm}\bar{g}_{(2a)}^{mq}=0,\\
&A_i^q(b,k)=\sum_{\substack{a+c=k-b,\\
k-1\geq a,c\geq 0}}g^{(2c)}_{im}\bar{g}_{(2a)}^{mq}=0,
\end{align*}
by the definition of $\bar{g}_{(2a)}^{mq}$. Therefore,
\begin{align*}
T_{lij}(k,n)=0,
\end{align*}
for $2\leq 2k \leq n-1$ so that
\begin{align*}
L_Zg(\frac{\partial}{\partial x^i}, \frac{\partial}{\partial x^j})=O(x^{n-2}),
\end{align*}
for $1\leq i,j\leq n$, for any $n\geq 3$. Therefore, by Theorem \ref{thm_baseVector}, we obtain the following existence theorem of a Killing vector field on $(M,g)$.

\begin{thm}\label{thm_symmetryrealization}
 Let $\overline{M}$ be an $(n+1)$-dimensional compact smooth manifold with boundary $\partial M$ and $n\geq 3$. Assume that $(M, g)$ is a smooth conformally compact Einstein manifold with non-positive sectional curvature and a smooth conformal infinity $(\partial M, [\hat{g}])$. Then a conformal Killing vector field $Y$ extends to a vector field $X$ on $\overline{M}$ of $C^{n+1}$ up to $\partial M$ so that $X$ is a Killing vector field on $(M, g)$.
\end{thm}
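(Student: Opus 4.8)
The plan is to reduce everything to Theorem~\ref{thm_baseVector}: since that theorem, in the sharpened form discussed above (valid for a decay exponent $s>\frac{n}{2}-2$), converts an asymptotically Killing field into a genuine Killing field $X$ with $X|_{\partial M}$ prescribed, it suffices to produce a single vector field $Z$ on $\overline{M}$ with $Z|_{\partial M}=Y$ and $L_Z g=O(x^{n-2})$. Thus the entire problem becomes the construction and estimation of one explicit extension $Z$, after which the analytic heavy lifting is outsourced to Theorem~\ref{thm_baseVector}.

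First I would fix the gauge. Unless $(\partial M,[\hat{g}])$ is the conformal class of the round sphere---in which case $(M,g)$ is hyperbolic space and $Y$, as a conformal Killing field of the round $S^n$, extends through the ambient isometries of $H^{n+1}$---Theorem~\ref{thm_isometry_rigidity} lets me choose a representative $\hat{g}\in[\hat{g}]$ for which the given conformal Killing field $Y$ is actually a Killing field of $(\partial M,\hat{g})$. I take $x$ to be the geodesic defining function associated with this $\hat{g}$. Then $a_0(y)=\frac1n\,\mathrm{div}_{\hat{g}}Y=0$, so the ansatz $(\ref{equn_vector1})$ with $(\ref{vect1_vert})$ and $(\ref{vect1_tant})$ collapses to the $x$-independent horizontal field $Z=\sum_{i\ge1}Y^i\frac{\partial}{\partial x^i}$ with vanishing normal component. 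For this $Z$ the components $(\ref{equn_component1})$ and $(\ref{equn_component2})$ vanish identically, so the whole estimate rests on the tangential component $(\ref{equn_component3})$.

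The core of the argument, and the main obstacle, is then to show that $(\ref{equn_component3})$ is $O(x^{n-2})$. The idea is to insert the Fefferman--Graham expansion $(\ref{equn_expansion1})$/$(\ref{equn_expansion2})$ into $(\ref{equn_component3})$ and collect powers of $x$. Two features make this work. First, for $2\le 2k\le n-1$ each coefficient $g^{(2k)}$, and the log-coefficient $\tilde{g}$ in the even case, is a universal expression in the intrinsic curvature of $\hat{g}$ and its covariant derivatives; since $Y$ is Killing on $(\partial M,\hat{g})$, its flow preserves $\hat{g}$ and hence all of these curvature quantities, so $L_Y g^{(2k)}=0$ and $L_Y\tilde{g}=0$ by the standard fact that Lie differentiation along a Killing field annihilates curvature and its covariant derivatives (see \cite{Yano1}). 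This kills every ``$L_Y$'' term in the expansion. Second, the surviving contributions reorganize into the $(0,3)$-tensors $T_{lij}(k,n)$, built from the $g^{(2k)}$, their tangential derivatives, and the inverse-metric coefficients $\bar{g}_{(2a)}^{mq}$, and the task is to check $T_{lij}(k,n)=0$ for $2\le 2k\le n-1$. This in turn reduces to the vanishing of the partial convolutions $A_i^q(b,k)=\sum_{a+c=k-b}g^{(2c)}_{im}\bar{g}^{mq}_{(2a)}$, which I expect to obtain from the inverse-metric relation $\sum_{a+c=m}g^{(2c)}_{im}\bar{g}^{mq}_{(2a)}=\delta_i^q\,\delta_{m0}$ coupling the expansions of $h_{ij}$ and $h^{ij}$; care is needed because the index constraints in $T$ truncate these sums, and the bookkeeping of which boundary terms of the convolution are absorbed into the Christoffel contributions $g^{(2k)}_{jp}\Gamma^p_{il}(\hat{g})+\cdots$ is exactly where I expect the delicate part to lie. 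Granting $T_{lij}(k,n)=0$ in that range, one gets $L_Z g=O(x^{n-2})$, i.e. $s=n-2>\frac{n}{2}-2$ for $n\ge3$, so the sharpened Theorem~\ref{thm_baseVector} yields a Killing field $X$ on $(M,g)$, of class $C^{n+1}$ up to $\partial M$, with $X|_{\partial M}=Y$, as claimed.
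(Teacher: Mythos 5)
Your proposal follows essentially the same route as the paper: reduce to the sharpened Theorem~\ref{thm_baseVector} with decay exponent $s=n-2>\frac{n}{2}-2$, use Obata's theorem (Theorem~\ref{thm_isometry_rigidity}) to choose $\hat{g}$ so that $Y$ is genuinely Killing on the boundary (making $a_0=0$ in the ansatz $(\ref{equn_vector1})$, $(\ref{vect1_vert})$, $(\ref{vect1_tant})$), kill the $L_Y g^{(2k)}$ and $L_Y\tilde g$ terms because these coefficients are universal curvature expressions preserved by the Killing flow, and then verify $T_{lij}(k,n)=0$ via the inverse-metric convolution identities $\sum_{a+c=m}g^{(2c)}_{im}\bar g^{mq}_{(2a)}=0$ for $m\geq 1$ with the Christoffel terms absorbing the truncated boundary contributions---exactly the computation the paper carries out. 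The only difference is cosmetic and in your favor: you explicitly dispose of the round-sphere conformal class (where Obata's theorem does not apply) via the rigidity results cited in the paper, a case the paper's own proof passes over silently.
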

Note that by the natural homeomorphism map $Exp^{+\infty}_{p_0}: UT_{p_0}M \to \partial M$ from the unit tangent sphere $UT_{p_0}M$ at $p_0$ to the infinity $\partial M$ and Lemma \ref{lem_vectorposition} or Lemma \ref{lem_stableset}, such an extension is unique. By Lemma \ref{lem_stableset} and Theorem \ref{thm_symmetryrealization}, we have
\begin{cor}
For any smooth metric $h$ on $S^n$ which is close enough to the round metric $g_0$ in $C^{2,\alpha}$, for any conformal Killing vector field $Y$ on $(M, h)$, the zero set of $Y$ must be the intersection of $\partial M$ and the closure of the submanifold $Exp_{p_0}(S)$ of $C^{n+1}$ in the closed Euclidean ball $\bar{B}_1(0)$, where $Exp_{p_0}$ is the exponential map at the spherical center of gravity $p_0$ of the conformally compact Einstein manifold $(B_1(0), g)$ with non-positive curvature, solved in \cite{GL}. In particular, if the zero set of the conformal Killing vector field $Y$ is non-empty, then it is either a two points set or a connected sub-manifold of dimension no less than one on $S^n$. Moreover, by Obata's Theorem and Theorem 34 in \cite{petersen}, The zero set is totally geodesic and of even codimension on $S^n$ under some metric $\tilde{h}\in[h]$.
\end{cor}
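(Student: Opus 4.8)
The plan is to move the question to the interior via the Killing extension, apply the structural description of the zero set in Lemma \ref{lem_zeroset}, and then transport the answer back to $\partial M$ through the sphere-at-infinity homeomorphism $Exp^{+\infty}_{p_0}$. First I would fix the filling: by \cite{GL}, for $h$ close to $g_0$ in $C^{2,\alpha}$ there is a conformally compact Einstein metric $g$ on $B_1(0)$ with conformal infinity $(S^n,[h])$, and by \cite{LiQingShi} this $g$ is negatively curved and simply connected once $h$ is close enough to $g_0$; hence $(B_1(0),g)$ is Cartan--Hadamard and the hypotheses of Theorem \ref{thm_symmetryrealization} and Lemma \ref{lem_zeroset} are satisfied. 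Given a conformal Killing field $Y$ of $(\partial M,[h])$, Theorem \ref{thm_symmetryrealization} extends it to a Killing field $X$ on $(M,g)$, of class $C^{n+1}$ up to $\partial M$, with $X|_{\partial M}=Y$ tangent to $\partial M$ (since $a(0,y)=0$ in $(\ref{equn_vector1})$); by Lemma \ref{lem_zeroset} the zero set of $X$ in $\overline M$ is $\overline{Exp_{p_0}(S)}$ for a linear subspace $S\subseteq T_{p_0}M$, with $S=\{0\}$ corresponding to the single point $p_0$.

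The core step is the identity $\{Y=0\}=\partial M\cap\overline{Exp_{p_0}(S)}=Exp^{+\infty}_{p_0}(S\cap UT_{p_0}M)$. Let $\phi_t$ be the isometry flow generated by $X$; its $C^{n+1}$ regularity up to $\partial M$ (Theorem \ref{thm_baseVector}) makes it a flow of diffeomorphisms of $\overline M$ preserving $\partial M$, with boundary flow generated by $Y=X|_{\partial M}$. Since $p_0$ is the common fixed point of all isometries (Lemma \ref{lem_stableset}), $\phi_t(p_0)=p_0$, so $\phi_t$ maps each geodesic ray $\gamma_\theta(r)=Exp_{p_0}(r\theta)$ to $\gamma_{d\phi_t\theta}(r)$ and therefore satisfies the equivariance $\phi_t|_{\partial M}\circ Exp^{+\infty}_{p_0}=Exp^{+\infty}_{p_0}\circ d\phi_t|_{p_0}$. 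For $q=Exp^{+\infty}_{p_0}(\theta)$ I would then run the chain of equivalences: $Y(q)=0$ iff $q$ is fixed by the boundary flow, iff $\theta$ is fixed by $d\phi_t|_{p_0}$ for all $t$, iff the ray $\gamma_\theta$ is pointwise fixed by $\phi_t$, iff $X$ vanishes identically along $\gamma_\theta$ (equivalently, by Lemma \ref{lem_vectorposition}, its components $X^i(\theta)$, being independent of $r$, vanish), iff $\theta\in S$. The reverse inclusion $\supseteq$ is the easy direction and also follows from continuity of $X$ up to $\partial M$, limits of interior zeros being boundary zeros.

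Writing $k=\dim S$, the set $S\cap UT_{p_0}M$ is a $(k-1)$-sphere carried homeomorphically onto $\{Y=0\}$ by $Exp^{+\infty}_{p_0}$: for $k=0$ the zero set is empty, for $k=1$ it is two points, and for $k\geq 2$ it is a connected submanifold of $S^n$ of dimension $k-1\geq 1$, which is exactly the asserted dichotomy. For the last sentence, when $h$ is not conformally equivalent to the round metric, Theorem \ref{thm_isometry_rigidity} furnishes $\tilde h\in[h]$ for which $Y$ is a genuine Killing field, and Theorem 34 of \cite{petersen} then realizes $\{Y=0\}$ as a totally geodesic submanifold of even codimension (the latter because the linearization of a Killing field at a zero is skew-symmetric, hence of even rank).

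The main obstacle is the chain of equivalences in the second paragraph: one must verify that a boundary fixed point of the conformal flow pulls back to a \emph{pointwise}-fixed geodesic ray, which hinges on $p_0$ being a common fixed point of all isometries and on the equivariance of $Exp^{+\infty}_{p_0}$, and one must justify extending $\phi_t$ to $\overline M$ from the $C^{n+1}$ regularity of $X$ given by Theorem \ref{thm_baseVector}. Everything else is bookkeeping with the dimension of $S$ and a citation to the classical structure of zero sets of Killing fields.
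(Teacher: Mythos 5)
Your proposal is correct and takes essentially the same route as the paper, which derives the corollary directly from Lemma \ref{lem_stableset}, Lemma \ref{lem_zeroset}, Theorem \ref{thm_symmetryrealization}, and the homeomorphism $Exp^{+\infty}_{p_0}$ (together with the \cite{GL} and \cite{LiQingShi} input and the Obata/\cite{petersen} citation for the final claim). Your write-up simply fills in the details the paper leaves implicit, notably the flow-equivariance argument identifying $\{Y=0\}$ with $Exp^{+\infty}_{p_0}(S\cap UT_{p_0}M)$ and the dimension count on $S$.
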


For any fixed $q=Exp^{+\infty}_{p_0}(v)\in \partial M$ corresponding to a unit vector $v\in UT_{p_0}M$ under the natural homeomorphism map $Exp^{+\infty}_{p_0}$, let $Y_1,...,Y_k$ be $k$ smooth conformal Killing vector fields on $(\partial M, [\hat{g}])$ which are linearly independent in $T_q\partial M$. They extend to the Killing vector fields $X_1,...,\,X_k$ in $(M, g)$, which are linearly independent at $Exp_{p_0}(tv)$ for $t>0$. Now assume $Y_1,...,Y_{n+k}$ are $(n+k)$ conformal Killing vector fields on $\partial M$ which form a linear basis of $T_q\partial M$ at each point $q \in \partial M$. Then they extend to $(n+k)$ Killing vector fields $X_1,...,\,X_{n+k}$ in $(M, g)$,
which form a linear basis of $T_pS_{p_0}(r)$ for $r>0$, with $p\in S_{p_0}(r)$ and $S_{p_0}(r)$ the $r$-geodesic sphere centered at $p_0$. By Theorem \ref{thm_isometry_rigidity}, we choose a smooth representation $\hat{g}$ of the conformal infinity under which $Y_1,...,\,Y_{n+k}$ are Killing vector fields. Let $x$ be the smooth geodesic defining function for this special metric $\hat{g}$. We will show that $x=C e^{-r}$ for some constant $C>0$ where $r$ is the distance function in $(M, g)$ to the spherical center of gravity $p_0$ of $(M, g)$.
\begin{thm}\label{thm_definingfunction}
Let $\overline{M}$ be an $(n+1)$-dimensional simply connected compact smooth manifold with boundary $\partial M$ and $n\geq 3$. Assume that $(M, g)$ is a smooth conformally compact Einstein manifold with non-positive sectional curvature and a smooth conformal infinity $(\partial M, [\hat{g}])$. Assume $p_0$ is the spherical center of gravity of $(M, g)$ and $r$ is the distance function to $p_0$ in $(M, g)$. Let $Y_1,...,Y_{n+k}$ be $(n+k)$ conformal Killing vector fields on $\partial M$ which form a linear basis of $T_q\partial M$ at each point $q \in \partial M$. Assume $\hat{g}$ is the representation in the conformal infinity under which $Y_1,...,Y_{n+k}$ are Killing vector fields. Let $x$ be the geodesic defining function for $\hat{g}$. Then $C^{\frac{1}{2}}x=e^{-r}$, for some constant $C>0$ and $0\leq x \leq C^{-\frac{1}{2}}$. In another word, let
\begin{align*}
g=dr^2+g_r.
\end{align*}
Then there exists a constant $C>0$ so that $\displaystyle\lim_{r\to+\infty}e^{-2r}g_r=C \hat{g}$ on $\partial M$.
\end{thm}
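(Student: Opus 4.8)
The plan is to show that, under the chosen homogeneous representative $\hat{g}$, both the distance function $r$ and the geodesic defining function $x$ are invariant under the isometry group they generate, that this invariance forces $x$ to depend on $r$ alone, and finally that the unit-gradient property of a geodesic defining function pins down this dependence to be exponential. First I would extend each $Y_a$ to a Killing vector field $X_a$ on $(M,g)$ by Theorem \ref{thm_symmetryrealization}, and let $G$ be the group of isometries generated by the flows $\phi^a_t$ of $X_1,\dots,X_{n+k}$. By Lemma \ref{lem_vectorposition} every $X_a$ vanishes at the spherical center of gravity $p_0$, so $p_0$ is a common fixed point of $G$ and hence $r=d_g(\cdot,p_0)$ is $G$-invariant. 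I would then argue that $x$ is $G$-invariant as well: each $\phi^a_t$ is a $g$-isometry whose boundary restriction is the flow of $Y_a$, which is a $\hat{g}$-isometry by our choice of $\hat{g}$; therefore $(\phi^a_t)^*x$ is again a geodesic defining function for $\hat{g}$, since it is a defining function with $|\nabla^g\log((\phi^a_t)^*x)|_g=1$ near $\partial M$ and $((\phi^a_t)^*x)^2 g\big|_{\partial M}=\hat{g}$. By the uniqueness of the geodesic defining function recalled after Definition \ref{defn_conformallycompactEinsteinmetric}, this gives $(\phi^a_t)^*x=x$.

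Next I would use that $Y_1,\dots,Y_{n+k}$ span $T_q\partial M$ at every $q$, so that, as noted just before the statement and by Lemma \ref{lem_vectorposition} (the $X_a$ are $r$-independent in the polar coordinates about $p_0$), the vectors $X_1,\dots,X_{n+k}$ span $T_p S_{p_0}(r)$ at every $p\in S_{p_0}(r)$ and every $r>0$. Hence each $G$-orbit is open in the connected geodesic sphere $S_{p_0}(r)$, and being also closed it must be all of $S_{p_0}(r)$; that is, $G$ acts transitively on each geodesic sphere. Since $x$ is $G$-invariant and therefore constant on $G$-orbits, $x$ is constant on each $S_{p_0}(r)$, so there is a function $f$ with $x=f(r)$.

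Finally I would combine the two unit-gradient facts. A geodesic defining function satisfies $|\nabla^g\log x|_g=1$, equivalently $|\nabla^g x|_g=x$, near $\partial M$, while $|\nabla^g r|_g=1$. Writing $x=f(r)$ gives $|\nabla^g x|_g=|f'(r)|=f(r)$; since $x>0$ and $x\to 0$ as $r\to\infty$ we have $f'<0$, so $f'=-f$ and $f(r)=Ae^{-r}$ for a constant $A>0$. Setting $A=C^{-\frac{1}{2}}$ yields $C^{\frac{1}{2}}x=e^{-r}$, with $x$ ranging over $[0,C^{-\frac{1}{2}}]$ as $r$ ranges over $[0,+\infty]$. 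For the last assertion, from $dx=-x\,dr$ one gets $\bar{g}=x^2 g=x^2\,dr^2+g_x$, hence $g=dr^2+x^{-2}g_x$, so that $g_r=x^{-2}g_x$ and $e^{-2r}g_r=e^{-2r}x^{-2}g_x=C\,g_x\to C\hat{g}$ as $r\to\infty$.

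The main obstacle is the passage from the symmetry of the conformal infinity to the statement that $x$ is a radial function: concretely, proving the $G$-invariance of the geodesic defining function, which ties the unique bulk defining function to the boundary isometries, together with the transitivity of $G$ on each geodesic sphere, which requires that the extended Killing fields still span the tangent space of the spheres. Once these two facts are in place, the exponential relation $C^{\frac{1}{2}}x=e^{-r}$ follows from the one-line ODE above.
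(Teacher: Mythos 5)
Your proposal is correct, but it routes the heart of the argument differently from the paper. Both proofs share the same skeleton: extend $Y_1,\dots,Y_{n+k}$ to Killing fields $X_1,\dots,X_{n+k}$ by Theorem \ref{thm_symmetryrealization}, use the fact (stated just before the theorem) that these span $T_pS_{p_0}(r)$ so that the generated group acts transitively on each geodesic sphere, and invoke uniqueness of geodesic defining functions, i.e.\ of $C^2$ solutions of the non-characteristic first-order equation $|d\rho|_{\rho^2g}=1$ with prescribed boundary metric. The difference is which pair of functions the uniqueness is applied to. The paper works on $\rho=e^{-r}$: its longest and most technical step is the graph/arc-length argument (the unit normal $\vec n$, the parameter $s$, the expansion $x=s+C_1s^2+C_2s^3+o(s^3)$ along $\gamma_q$) showing that $\rho$ is $C^2$ up to $\partial M$; it then identifies $(\rho^2g)\big|_{\partial M}=C\hat g$ using invariance under the $X_a$ and the fact that a homogeneous metric in a conformal class is unique up to a constant factor, and finally matches $\rho$ with $C^{\frac12}x$ by uniqueness of the geodesic defining function about $C\hat g$. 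You instead work on $x$: you apply the same uniqueness principle to the pair $x$ and $x\circ\phi^a_t$ (both geodesic defining functions about $\hat g$, since $\phi^a_t$ is a $g$-isometry whose boundary restriction preserves $\hat g$), deduce that $x$ is invariant under the group, hence constant on geodesic spheres and of the form $f(r)$, after which the eikonal condition $|dx|_g=x$ collapses to the ODE $|f'|=f$ and gives $f=Ae^{-r}$. Your route entirely avoids the boundary-regularity analysis of $e^{-r}$, which is the paper's main technical burden; what the paper's route buys in exchange is the $C^2$ regularity of the compactification $e^{-2r}g$ up to the boundary as an explicit byproduct (relevant to the Remark following the theorem). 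Two points you should make explicit to close your argument: first, the invariance $(\phi^a_t)^*x=x$ holds a priori only on a collar neighborhood $U$ of $\partial M$, where the uniqueness of the geodesic defining function applies, so you obtain $x=f(r)$ only for large $r$ — this suffices because $\overline M\setminus U$ is a compact subset of $M$, hence $S_{p_0}(r)\subset U$ for all large $r$, and the geodesic defining function is in any case only canonically defined near $\partial M$; second, the flows $\phi^a_t$ do extend to $C^{n+1}$ diffeomorphisms of $\overline M$ preserving $\partial M$, because the $X_a$ are $C^{n+1}$ up to the boundary by Theorem \ref{thm_baseVector} and restrict along $\partial M$ to the $Y_a$, which are tangent to it.
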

\begin{proof}
Let $X_1,...,X_{n+k}$ be the Killing vector fields in $(M, g)$ extended by $Y_1,...,Y_{n+k}$, which are of $C^{n+1}$ up to the boundary $\partial M$. The $1$-parameter Lie groups of $Y_1,...,Y_{n+k}$ generate a subgroup of the isometry on $\partial M$ under the action of which the orbit of each point on $\partial M$ covers $\partial M$. The same holds for $X_1,...,X_{n+k}$ on the geodesic sphere $S_{p_0}(r)$ centered at $p_0$ of radius $r>0$.

For any fixed $q\in \partial M$, let $(x^1,...,x^n)$ be a local coordinate in a neighborhood of $q$. Extending the functions $x^1,...,x^n$ in a small neighborhood $U$ of $q$ in $\overline{M}$ so that they are constants on each integral curve of $\frac{\partial}{\partial x}$ starting from the point on $\partial M$. We choose $(x^0, x^1,..., x^n)=(x, x^1,..., x^n)$ to be the local coordinate in $U$. Then the metric $g$ has the expression
\begin{align*}
g=x^{-2}\bar{g}=x^{-2}(dx^2+g_x).
\end{align*}
For $r$ large, by the argument in \cite{DJ} and \cite{LiQingShi}, $S_{p_0}(r)$ is a graph on $\partial M$ and moreover, the angle between the normal vector $\frac{\partial}{\partial r}$ of $S_{p_0}(r)$ and $-\frac{\partial}{\partial x}$ at points on $S_{p_0}(r)$ goes to zero uniformly as $r \to +\infty$. Without loss of generality, assume that $Y_1,...,Y_n$ are linearly independent at $q$. By continuity, we can choose the neighborhood $U$ small so that $X_1,..., X_n$ are linearly independent at each point in $U$. Consider $S_{p_0}(r)\bigcap U$ as the integral sub-manifold of the linear space generated by $X_1,...,X_n$. By the regularity of $X_1,...,X_n$, $\{S_{p_0}(r), r\,\,\text{large}\}$ is a family of graphs of at least $C^{n+1}$ on $\partial M$ in $(U, (x, x^1,...,x^n))$. 

Let $\vec{n}$ be the unit inner normal vector of $S_{p_0}(r)$ in $(\overline{M}, \bar{g})$ with $\partial M=S_{p_0}(+\infty)$, which is orthogonal to $X_1,...,X_n$ with regularity at least of $C^{n+1}$ in $U$. Denote $\gamma_{q}=\gamma_q(x)$ to be the integral curve of $\vec{n}$ starting from the fixed point $q \in \partial M$. Now we define a function $s=\varphi$ on $V$ so that along $\gamma_q$ the function $\varphi$ equals to the length function of $\gamma_q$ starting from the chosen point $q\in \partial M$, $\varphi=0$ on $\partial M$ and assume each $S_{p_0}(r)$ to be a level set of $\varphi$ in $U$ for $r>0$ large. It is clear that on each $S_{p_0}(r)$ in $U$, $\varphi$ equals to the length of $\gamma_q$ starting from $q$ to the intersection of $\gamma_q$ and $S_{p_0}(r)$ in $(\overline{M},\bar{g})$. By regularity of $X_1,..., X_n$, $\varphi$ is of $C^{n+1}$ in $U$. Since $r$ and $\varphi$ share the same level sets on $U\setminus \partial M$, we consider $r=f(\varphi)=f(s)$ with the function $f\in C^{n+1}$ on $U \setminus \partial M$ by smoothness of $r$ and the Killing vector fields in $M$. Therefore,
\begin{align*}
\nabla^gr=f'(s)\nabla^gs.
\end{align*}
Restricted on $\gamma_{q}$,
\begin{align*}
&<\frac{\partial}{\partial r}, \frac{\partial}{\partial r}>_g=1,\\
&<\frac{\partial}{\partial s}, \frac{\partial}{\partial s}>_{x^2g}=1,
\end{align*}
so that
\begin{align*}
x\frac{\partial}{\partial s}r=-1
\end{align*}
when restricted on $\gamma_q \subseteq U$. By the regularity of $\vec{n}$, we have the expansion
\begin{align*}
x=s+C_1s^2+C_2s^3+o(s^3)
\end{align*}
with some constants $C_1,\,C_2$, along $\gamma_q$. Let $\rho=e^{-r}$. Then when restricted on $\gamma_q$,
\begin{align*}
&\frac{d r}{ds}=-\frac{1}{s+C_1s^2+C_2s^3+o(s^3)},\\
&\rho=e^{-r_0+\int_{s_0}^s\frac{1}{s+C_1s^2+C_2s^3+o(s^3)}ds}=se^{F(s)},
\end{align*}
for some $F(s) \in C^2([0,s_0))$ with some $r_0>0$ and $s_0>0$ small. Therefore, $\rho=e^{-r}$ is at least of $C^2$ in a small neighborhood of $q$ in $\overline{M}$ for the fixed point $q\in \partial M$. 
By the arbitrary choice of $q\in \partial M$, $\rho=e^{-r}$ is of $C^2$ in a neighborhood of $\partial M$. 
Therefore,
\begin{align*}
\rho^2g=\frac{\rho^2}{x^2}\,x^2g
\end{align*}
can extend to a $C^2$ metric up to $\partial M$. Moreover, by continuity $X_1,..., X_{n+k}$ are still Killing vector fields of $\rho^2g$ up to the boundary. Therefore, the restriction of $\rho^2g$ on $\partial M$ is a homogeneous metric in $[\hat{g}]$ which is $C\hat{g}$ for some constant $C>0$. 
In summary, $\rho$ is of $C^2$ up to the boundary and it satisfies
\begin{align}
&\rho=0,\,\,\text{on}\,\,\partial M,\,\,\,\rho>0,\,\,\text{in}\,\,\,M,\\
&\label{equn_definingfunction}|d \rho|_{\rho^2g}=|dr|_{g}=1,\,\,\,\text{in a neighborhood of}\,\,\partial M,\\
&\lim_{\rho\to 0}(\rho^2g)\big|_{\partial M}=\,C\,\lim_{x\to 0}(x^2g)\big|_{\partial M}.
\end{align}
As discussed in \cite{Graham}, this is an initial value problem of the first order nonlinear differential equation $(\ref{equn_definingfunction})$ which is non-degenerate with $\partial M$ non-characteristic. There exists a unique $C^2$ solution to this initial value problem. But both $C^{\frac{1}{2}}x$ and $\rho$ are solutions to this problem. Therefore
\begin{align*}
C^{\frac{1}{2}}x=\rho=e^{-r},
\end{align*}
for $0 \leq x \leq C^{-\frac{1}{2}}$.

To end the proof, we have that the coordinates charts $(x,x^1,...,x^n)$ form a smooth coordinate cover on $\overline{M}\setminus \{p_0\}$. For convenience, we denote $\hat{g}=\displaystyle\lim_{r\to+\infty}(e^{-2r}g)\big|_{\partial M}$ and still denote $x$ to be the corresponding geodesic defining function of $\hat{g}$. Therefore, $0\leq x\leq 1$ on $\overline{M}$. We can pick up $(x, \theta^1,...,\theta^n)$ to be coordinate charts on $\overline{M}\setminus \{p_0\}$ so that the Killing vector $X_m=X_m^i(\theta)\frac{\partial}{\partial \theta^i}$ must be smooth up to the boundary for $1\leq m\leq n+k$, where $(r, \theta^1,...,\theta^n)$ is the polar coordinates on $(M, g)$ under the exponential map at $p_0$.
\end{proof}

\begin{Remark}
It is interesting to know how weak the regularity of the solutions is required in order that the unique existence of solutions to the initial value problem of the non-linear equation $(\ref{equn_definingfunction})$ still holds. If it is true for general compactification $\bar{g}=e^{-2r}g$ in Lipschitz sense and weak $W^{2,p}$ sense for any $p>1$ large with the representation $\hat{g}=\bar{g}-dx^2$ of the conformal infinity of certain regularity, then there is a possibility that the conclusion of Theorem \ref{thm_definingfunction} still holds on such conformally compact Einstein manifolds.
\end{Remark}

\section{Conformally Compact Einstein Manifolds with Homogeneous Conformal Infinity}\label{section_CCEinsteinHCITY}

Now we consider homogeneous metrics on the sphere $S^n$.
\begin{defn}
Let $(M, g)$ be a smooth Riemannian manifold. $M$ is called a homogeneous space if there is a (Lie) group $G$ of self-diffeomorphism of $(M,g)$ which acts
transitively. That is, for any $p,q \in M$, there exists $g\in G$ so that $p=gq$. If moreover, the action of each element on $G$ gives an isometry transformation on $M$, we call $(M, g)$ a Riemannian homogeneous space. $M$ is then diffeomorphic to $G/H$, where $H$ is the isotropy group of some point in $M$.
\end{defn}
In this paper, for a homogeneous space we always mean a Riemannian homogeneous space. Under the assumption in Theorem \ref{thm_definingfunction}, $\hat{g}=\displaystyle\lim_{r\to+\infty}(e^{-2r}g)\big|_{\partial M}$ is a homogeneous metric on $\partial M$ and $x=e^{-r}$ is the corresponding smooth geodesic defining function with $0\leq x\leq 1$ on $\overline{M}$. Fix a point $q\in \partial M$, with the coordinate $(0, \theta_0)=(0, \theta_0^1,...,\theta_0^n)$ under $(x,\theta^1,...,\theta^n)$. Without loss of generality, assume the Killing vector fields $Y_1,..., Y_n$ are linearly independent in a neighborhood of $\theta_0$ on $S^n$. Note that the extended Killing vector field $X_k=\displaystyle\sum_{m=1}^nX_k^m\frac{\partial}{\partial \theta^m}=\sum_{m=1}^nY_k^m\frac{\partial}{\partial \theta^m}$ with $X_k^m$ independent of $x$. Under the polar coordinates $(r, \theta)=(r, \theta^1,...,\theta^n)$,
\begin{align*}
g =dr^2+g_r = dr^2 + \sum_{i,j=1}^ng_{ij}d\theta^id \theta^j.
\end{align*}
Then
\begin{align*}
&\frac{\partial}{\partial \theta^i}(X_k^mg_{mj})+\frac{\partial}{\partial \theta^j}(X_k^mg_{mi})-2\Gamma_{ij}^p(g)X_k^mg_{mp}=0,\,\,\,\text{which is}\\
&\frac{\partial}{\partial \theta^i}X_k^pg_{pj}+\frac{\partial}{\partial \theta^j }X_k^pg_{pi}+X_k^q\frac{\partial}{\partial \theta^q}g_{ij}=0.
\end{align*}
Define the inverse of the matrix with elements $X_k^j$ for $1\leq k, j \leq n$ as
\begin{align*}
\left(\begin{matrix}&&&\\ &Z_i^j&&\\ &&& \end{matrix}\right)=\left(\begin{matrix}&&&\\ &X_i^j&&\\ &&& \end{matrix}\right)^{-1}.
\end{align*}
We denote
\begin{align*}
C_{ij}^p=Z_i^k\frac{\partial}{\partial \theta^j}X_k^p,
\end{align*}
and
\begin{align}\label{equn_twotensor}
T_{ij}^p = - T_{ji}^p=C_{ij}^p- C_{ji}^p=Z_i^kZ_j^m[X_m, X_k]^p,
\end{align}
with $[X_m, X_k]$ Lie bracket of the vectors $X_m$ and $X_k$.
Note that $C_{ij}^p$ and $T_{ij}^p$ are independent of $r$ and the metric. Then
\begin{align}
&\label{equn_tangentspace}\frac{\partial}{\partial \theta^q}g_{ij}=-C_{qi}^mg_{mj}-C_{qj}^mg_{mi},\\
&\Gamma_{ij}^p(g_r)=\frac{1}{2}[-(C_{ij}^p+C_{ji}^p)+g^{pq}(-C_{iq}^mg_{mj}-C_{jq}^mg_{mi}+C_{qi}^mg_{mj}+C_{qj}^mg_{mi})].
\end{align}
By the differential equations $(\ref{equn_tangentspace})$, there exists $E_i^j(\theta)$ such that
\begin{align*}
g_{ij}(r, \theta)=E_i^p(\theta)g_{pq}(r, \theta_0)E_j^q(\theta),
\end{align*}
for $\theta$ in a neighborhood of $\theta_0$. Direct calculation yields, that for $i,j\geq 1$ the Ricci tensor is
\begin{align}
R_{ij}(g_r)&\label{equn_Riccitensor}=\frac{1}{2}(\frac{\partial}{\partial \theta^p}T_{ij}^p+C_{ip}^kT_{kj}^p+C_{kj}^pT_{ip}^k+C_{kp}^pT_{ji}^k)-\frac{1}{2}g^{pq}(\frac{\partial}{\partial \theta^p}T_{iq}^m+C_{ip}^kT_{kq}^m+C_{pq}^kT_{ik}^m-C_{pk}^mT_{iq}^k)g_{mj}\\
&-\frac{1}{2}g^{pq}(\frac{\partial}{\partial \theta^p}T_{jq}^m+C_{jp}^kT_{kq}^m+C_{pq}^kT_{jk}^m-C_{pk}^mT_{jq}^k)g_{mi}+\frac{1}{4}T_{pi}^kT_{kj}^p-\frac{1}{4}g^{pq}T_{pi}^kT_{kq}^mg_{mj}\notag\\
&-\frac{1}{4}g^{pq}T_{pj}^kT_{kq}^mg_{mi}-\frac{1}{2}g^{kq}T_{pk}^pT_{iq}^mg_{mj}-\frac{1}{2}g^{kq}T_{pk}^pT_{jq}^mg_{mi}+\frac{1}{4}g^{pq}T_{pj}^kT_{iq}^mg_{km}+\frac{1}{4}g^{pq}T_{pi}^kT_{jq}^mg_{km}\notag\\
&-\frac{1}{4}g^{pl}(T_{jl}^mg_{mk}+T_{kl}^mg_{mj})g^{kq}(T_{pq}^mg_{mi}+ T_{iq}^mg_{mp})\notag.
\end{align}
Note that the Einstein equations are
\begin{align*}
R_{00}(g)=-n,\,\,R_{0i}(g)=0,\,\,R_{ij}(g)=-ng_{ij},
\end{align*}
for $i,j\geq 1$. Under local coordinates, the system becomes
\begin{align*}
&g^{pq}\frac{\partial^2}{\partial r^2}g_{pq}-\frac{1}{2}g^{pk}\frac{\partial}{\partial r}g_{km}g^{mq}\frac{\partial}{\partial r}g_{qp}=2n,\\ 
&C_{pq}^pg^{qk}\frac{\partial}{\partial r}g_{ki}+C_{iq}^mg^{pq}\frac{\partial}{\partial r}g_{mp}-C_{qp}^pg^{qk}\frac{\partial}{\partial r}g_{ki}-C_{pi}^kg^{pq}\frac{\partial}{\partial r}g_{kq}=0,\,\,\,\,i\geq 1,\\
&\frac{\partial^2}{\partial r^2}g_{ij}-2R_{ij}(g_r)+\frac{1}{2}g^{pq}\frac{\partial}{\partial r}g_{pq}\frac{\partial}{\partial r}g_{ij}-g^{pq}\frac{\partial}{\partial r}g_{pi}\frac{\partial}{\partial r}g_{qj}=2ng_{ij},\,\,\,\,i,j\geq 1.
\end{align*}

Since $g_r$ is a homogeneous metric on $S_{p_0}(r)$, we only need to consider the Einstein equations at $(r, \theta_0)$ for $r>0$. Let
\begin{align*}
g_r=\sinh^2(r)\bar{h}=\frac{x^{-2}(1-x^2)^2}{4}\bar{h}_{ij}d\theta^id\theta^j.
\end{align*}
At $\theta=\theta_0$, the Einstein equations becomes
\begin{align}
&\label{equn_EinsteinODEs1}\frac{d}{d x}(x(1-x^2)\bar{h}^{pq}\frac{\partial}{\partial x}\bar{h}_{pq})+\frac{1}{2}x(1-x^2)\bar{h}^{pk}\frac{d}{d x}\bar{h}_{km}\bar{h}^{mq}\frac{d}{d x}\bar{h}_{qp}-2\bar{h}^{pq}\frac{d}{d x} \bar{h}_{pq}= 0,\\ 
&\label{equn_EinsteinODEs2}C_{pq}^p\bar{h}^{qk}\frac{d}{d x}\bar{h}_{ki}+C_{iq}^m\bar{h}^{pq}\frac{d}{d x}\bar{h}_{mp}-C_{qp}^p\bar{h}^{qk}\frac{d}{d x}\bar{h}_{ki}-C_{pi}^k\bar{h}^{pq}\frac{d}{d x}\bar{h}_{kq}=0,\\ \notag \\
&\label{equn_EinsteinODEs3}-\frac{1}{8}x(1-x^2)^2\frac{d^2}{d x^2}\bar{h}_{ij}+\frac{1}{8}[(n-1)+(1+n)x^2]\,(1-x^2)\frac{d}{d x}\bar{h}_{ij}+\frac{x(1-x^2)^2}{8}\bar{h}^{pq}\frac{d}{d x}\bar{h}_{pi}\frac{d}{d x}\bar{h}_{qj}\\
&+\frac{1}{8}(1+x^2)(1-x^2)\bar{h}^{pq}\frac{d}{d x}\bar{h}_{pq}\bar{h}_{ij}-\frac{1}{16}x(1-x^2)^2\bar{h}^{pq}\frac{d}{d x}\bar{h}_{pq}\frac{d}{d x}\bar{h}_{ij}+(1-n)x\bar{h}_{ij}+xR_{ij}(\bar{h})=0,\notag
\end{align}
for $1\leq i,j\leq n$, with $C_{ij}^p(\theta_0)$ independent of $x$, and $R_{ij}(\bar{h})=R_{ij}(g_r)$ with formula $(\ref{equn_Riccitensor})$. This is a system of ordinary differential equations for $\bar{h}_{ij}$ on $x\in [0,1]$, with the boundary conditions
\begin{align}\label{equn_boundaryvalue1}
\bar{h}_{ij}(0)\in [\hat{g}_{ij}(\theta_0)],\,\,\bar{h}_{ij}(1)=g^0_{ij},\,\,\frac{d}{d x}\bar{h}_{ij}(0)=0,\,\,\frac{d}{d x}\bar{h}_{ij}(1)=0,
\end{align}
with $g^0$ the round metric on $S^n$. Note that the homogeneous metric in $[\hat{g}]$ is unique up to a constant multiplier, and then the initial data of $\bar{h}_{ij}(0)$ is determined up to a constant multiplier. Since the metric is left invariant on each $S_{p_0}(r)$, once $\bar{h}_{ij}(x)=\bar{h}_{ij}(x,\theta_0)$ is determined on $0\leq x\leq 1$, the Einstein metric
\begin{align*}
g=dr^2+g_r=x^{-2}(dx^2+\frac{(1-x^2)^2}{4}\bar{h})
\end{align*}
is determined completely. Therefore, uniqueness of the non-positively curved conformally compact Einstein metric, with the prescribed conformal infinity with a homogeneous representation, is equivalent to uniqueness of the solution $\bar{h}_{ij}=\bar{h}_{ij}(x)$ to the system $(\ref{equn_EinsteinODEs1})-(\ref{equn_EinsteinODEs3})$ with the boundary data $(\ref{equn_boundaryvalue1})$ so that $\bar{h}_{ij} \in C^{n-1}([0,1])\bigcap C^{\infty}((0,1])$. We will simplify the system of ordinary differential equations when $(\partial M, \hat{g})$ is a generalized Berger sphere and consider uniqueness of the solution to this boundary value problem. Higher dimensional case will be discussed else where.

There is a nice description on the homogeneous metrics on $S^n$ in \cite{Ziller1}. On the sphere $S^3$, we identify $S^3$ and the Lie group $\text{SU}(2)$ by the map
\begin{align*}
(z, w)\in S^3 \subseteq \mathbb{C}^2 \,\mapsto\,\,\left(\begin{matrix}z&w&\\ -\bar{w}&\bar{z}& \end{matrix}\right)\in \text{SU}(2).
\end{align*}
Let $\{Y_1,\,Y_2,\,Y_3\}$ be the basis of the Lie algebra $su(2)$ of $\text{SU}(2)$ given by
\begin{align*}
Y_1=\left(\begin{matrix}i&0&\\ 0&-i& \end{matrix}\right),\,Y_2=\left(\begin{matrix}0&1&\\ -1& 0& \end{matrix}\right),\,\,Y_3 =\left(\begin{matrix}0&i&\\ i&0& \end{matrix}\right),
\end{align*}
so that
\begin{align}\label{equn_symmetry1}
[Y_i,Y_j]=2\varepsilon_{ijk}Y_k,\,\,\,i,j,k=1,2,3,
\end{align}
with $\varepsilon_{ijk}=1$ when $(i-j)(j-k)(k-i)>0$; $\varepsilon_{ijk}=-1$ when $(i-j)(j-k)(k-i)<0$; and $\varepsilon_{ijk}=0$ otherwise. Note that for a triple $(\tilde{Y}_1, \tilde{Y}_2, \tilde{Y}_3)$ in $su(2)$, the identities $(\ref{equn_symmetry1})$ still hold if and only if $(\tilde{Y}_1, \tilde{Y}_2, \tilde{Y}_3)=(Y_1, Y_2, Y_3)C$ with $C\in \text{SO}(3)$. Consider the Riemannian manifold $(S^3, \hat{g})$ in which $Y_1,\,Y_2,\,Y_3$ are three linearly independent right invariant Killing vector fields, with three corresponding left invariant $1$-forms $\sigma_1,\,\sigma_2,\,\sigma_3$ and $\hat{g}$ is a left invariant metric. We can choose the triple $(Y_1,Y_2,Y_3)$ so that $(\ref{equn_symmetry1})$ holds and $\hat{g}$ has the form
\begin{align*}
\hat{g}=\lambda_1\sigma_1^2+\lambda_2\sigma_2^2+\lambda_3\sigma_3^2,
\end{align*}
with $\lambda_1,\,\lambda_2,\, \lambda_3$ three positive numbers. Such a metric $\hat{g}$ is called a {\em generalized Berger metric}. If two of the numbers coincide, for instance, $\lambda_2=\lambda_3$, we call $\hat{g}$ a {\em Berger metric} on $S^3$. For a Berger metric, there exists a fourth Killing vector field $Y_4$ on $S^3$ which is left invariant.

Let $\overline{M}=\overline{B}_1(0)$ be diffeomorphic to the closed unit ball in the Euclidean space $\mathbb{R}^4$, with boundary $S^3$. Assume that $(M, g)$ is a smooth conformally compact Einstein manifold with non-positive sectional curvature and a smooth conformal infinity $(S^3, [\hat{g}])$, where $(S^3, \hat{g})$ is a Berger sphere. Let $Y_1,\,Y_2,\,Y_3$ be three right invariant Killing vector fields on $(S^3, \hat{g})$ as above. As in Theorem \ref{thm_definingfunction}, we can choose $\hat{g}=\displaystyle\lim_{r\to+\infty}(e^{-2r}g)\big|_{\partial M}$ and $x=e^{-r}$ is the corresponding geodesic defining function. Then $Y_1,\,Y_2,\,Y_3$ extend to three Killing vector fields $X_1=X_1^k(\theta)\frac{\partial}{\partial \theta^k},\,X_2=X_2^k(\theta)\frac{\partial}{\partial \theta^k},\,X_3=X_3^k(\theta)\frac{\partial}{\partial \theta^k}$ in $(M, g)$ with $(x, \theta^1,\theta^2,\theta^3)$ coordinate charts on $M\setminus \{p_0\}$ and $p_0$ the spherical center of gravity. Let $\sigma_1,\,\sigma_2,\,\sigma_3$ be the corresponding left invariant $1$-forms on $S^3$.

Therefore, $X_1, X_2, X_3$ are the right invariant Killing vector fields on $S_{p_0}(r)$ for all $r>0$ and $(\ref{equn_symmetry1})$ holds for $X_1,\,X_2,\,X_3$. Let
\begin{align*}
g=x^{-2}(dx^2+g_x),
\end{align*}
where $x=e^{-r}$. Then for $0\leq x <1$,
\begin{align*}
g_x=\frac{(1-x^2)^2}{4}\bar{h}=\lambda_1(x)\tilde{\sigma}_1^2+\lambda_2(x)\tilde{\sigma}_2^2+\lambda_3(x)\tilde{\sigma}_3^2,
\end{align*}
with the coefficients $\lambda_1,\,\lambda_2,\,\lambda_3$ depending on $x$ and the left invariant $1$-forms $\tilde{\sigma}_1,\tilde{\sigma}_2,\tilde{\sigma}_3$ corresponding to the triple $(\tilde{X}_1,\tilde{X}_2,\tilde{X}_3)=(X_1, X_2, X_3)C$ on each $S_{p_0}(r)$, where $C\in \text{SO}(3)$ depends on $x$. To show the uniqueness of conformally compact Einstein metrics with the prescribed conformal infinity, we
only need to show uniqueness of the metrics $\bar{h}(x)=\bar{h}(x, \theta_0)$ at the fixed point $\theta_0\in S^3$ along $x\in[0,1]$. 
For convenience of calculation of the curvature terms in the Einstein equations, we will use local coordinates.

We now computer $C_{ij}^p$ in $(\ref{equn_Riccitensor})$. 
Consider $S^3$ as the unit sphere in $\mathbb{R}^4$ with the coordinate $(x,y,u,v)$. Then on $S^3$, $x^2+y^2+u^2+v^2=1$. Up to the natural $\text{SO}(3)$ action, the three right invariant Killing vector fields on generalized Berger sphere $S^3$ are
\begin{align*}
X_1=(-y,x,v,u),\,\,X_2=(u,v,-x,-y),\,\,X_2=(v,-u,y,-x).
\end{align*}
Without loss of generality, the fixed point $\theta_0$ we choose on $S^3$ is $(1,0,0,0)$ under this coordinate. In order to choose a local coordinate $(\theta^1, \theta^2, \theta^3)$ in a neighborhood of $\theta_0$ on $S^3$ so that $X_i(\theta_0)=\frac{\partial}{\partial \theta^i}$ for $i=1,2,3$, we choose the coordinate
\begin{align}\label{equn_base}
\theta^1=y,\,\theta^2=-u,\,\theta^3=-v,
\end{align}
in a neighborhood of $\theta_0$. Therefore,
\begin{align*}
&X_1=\sqrt{1-(\theta^1)^2-(\theta^2)^2-(\theta^3)^2}\frac{\partial}{\partial \theta^1}+\theta^3\frac{\partial}{\partial \theta^2}-\theta^2\frac{\partial}{\partial \theta^3},\\
&X_2=-\theta^3\frac{\partial}{\partial \theta^1}+\sqrt{1-(\theta^1)^2-(\theta^2)^2-(\theta^3)^2}\frac{\partial}{\partial \theta^2}+\theta^1\frac{\partial}{\partial \theta^3},\\
&X_3=\theta^2\frac{\partial}{\partial \theta^1}-\theta^1\frac{\partial}{\partial \theta^2}+\sqrt{1-(\theta^1)^2-(\theta^2)^2-(\theta^3)^2}\frac{\partial}{\partial \theta^3}.
\end{align*}
In particular, at $\theta_0$, the element $Z_i^j$ of inverse of the matrix $\big[X_i^j\big]$ has $Z_i^j=\delta_i^j$ under $(\theta^1, \theta^2, \theta^3)$. Therefore, at $\theta_0$, $C_{ij}^k= \varepsilon_{jik}$ and $T_{ij}^k=2\varepsilon_{jik}$ with $\varepsilon_{jik}$ defined as in $(\ref{equn_symmetry1})$. Note that $C_{ij}^k$ is independent of the metric but depends only on the relationship $(\ref{equn_symmetry1})$. By $(\ref{equn_twotensor})$ and $(\ref{equn_symmetry1})$, we have that
\begin{align*}
T_{ij}^p=2Z_i^kZ_j^bX_a^p\varepsilon_{bka}.
\end{align*}
Therefore,
\begin{align*}
\frac{\partial}{\partial \theta^m}T_{ij}^p&=\,-2 \varepsilon_{cba}(-Z_i^d\frac{\partial}{\partial \theta^m}X_d^q\,Z_q^cZ_j^bX_a^p-Z_i^cZ_j^d\frac{\partial}{\partial \theta^m}X_d^q\,Z_q^bX_a^p+Z_i^cZ_j^b\frac{\partial}{\partial \theta^m}X_a^p)\\
&=\,-2 \varepsilon_{cba}(-C_{im}^q\,Z_q^cZ_j^bZ_a^p-Z_i^cC_{jm}^q\,Z_q^bX_a^p+Z_i^cZ_j^bX_a^qC_{qm}^p)\\
&=\,2\varepsilon_{qjp}C_{im}^q+ 2 \varepsilon_{iqp}C_{jm}^q- 2\varepsilon_{ijq}C_{qm}^p\\
&=\,2\sum_{q=1}^3(\varepsilon_{qjp}\varepsilon_{miq}+ \varepsilon_{iqp}\varepsilon_{mjq}- \varepsilon_{ijq}\varepsilon_{mqp}).
\end{align*}
at the point $\theta_0\in S^3$. Substituting all these data to the expression $(\ref{equn_Riccitensor})$ of the Ricci curvature tensor of $\bar{h}$, we have that $R_{ij}(\bar{h})$ vanishes identically for $i\neq j$.
\begin{lem}\label{lem_symmetry_Threedimensional}
Under the polar coordinate $(x, \theta^1, \theta^2, \theta^3)$ with $(\theta^1, \theta^2, \theta^3)$ chosen in $(\ref{equn_base})$ and $x$ the geodesic defining function about $\hat{g}$, the metric satisfies
\begin{align*}
\bar{h}=I_1(x)d(\theta^1)^2+I_2(x)d(\theta^2)^2+I_3(x)d(\theta^3)^2,
\end{align*}
with some positive functions $I_1,\,I_2,\,I_3$ at the point $(x, \theta_0)$ for $0\leq x \leq1$ where $I_i(1)=1$, $i=1,2,3$.
\end{lem}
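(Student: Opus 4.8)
My plan is to obtain the diagonal form of $\bar h$ from the extra rotational symmetry a Berger metric carries, rather than by wrestling with the off-diagonal Einstein equations directly. Because $\lambda_2=\lambda_3$, the metric $\hat g$ admits, besides the right-invariant Killing fields $Y_1,Y_2,Y_3$, a fourth Killing field $Y_4$ on $(S^3,\hat g)$, namely the left-invariant field dual to $\sigma_1$, which generates the right $U(1)$-action $\exp(tY_1)$. I would first record, from the explicit vector fields and the chart $(\ref{equn_base})$, that $Y_4$ and $Y_1$ coincide at the base point, $Y_4(\theta_0)=Y_1(\theta_0)=\partial/\partial\theta^1$, so that $W_\partial:=Y_4-Y_1$ is a Killing field of $(\partial M,\hat g)$ \emph{vanishing at} $q=\theta_0$; it spans the (one-dimensional) isotropy algebra at $q$ of the boundary isometry group. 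By Theorem \ref{thm_symmetryrealization}, applied to $Y_1$ and to $Y_4$, together with linearity and uniqueness of the extension, $W_\partial$ extends to a Killing field $W=X_4-X_1$ on $(M,g)$.

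Next I would locate the zero set of $W$. Every Killing field vanishes at the spherical center of gravity $p_0$, and $W$ also vanishes at the boundary point $q$; hence by Lemma \ref{lem_zeroset} the field $W$ vanishes along the entire radial geodesic $\gamma$ from $p_0$ to $q$, i.e.\ $W=0$ at every point $(x,\theta_0)$. By Lemma \ref{lem_vectorposition} the extended field is independent of the radial variable, $W=W^m(\theta)\,\partial/\partial\theta^m$, so its linearization at $(x,\theta_0)$ is one and the same linear endomorphism of $T_{\theta_0}$ for all $x$. A short computation in the ambient coordinates $(x,y,u,v)$ shows that $W$ generates the rotation of the $(u,v)$-plane fixing the $(x,y)$-plane; transported through $(\ref{equn_base})$, this is exactly the $SO(2)$ rotating $(\theta^2,\theta^3)$ and fixing $\theta^1$.

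Finally, since $W$ is Killing and fixes $(x,\theta_0)$, its flow acts by isometries fixing that point, so the quadratic form $\bar h(x,\theta_0)$ is invariant under this $SO(2)$ for every $x\in(0,1)$. Invariance of a symmetric bilinear form under rotation of the $(\theta^2,\theta^3)$-plane forces $\bar h_{12}=\bar h_{13}=\bar h_{23}=0$ together with $\bar h_{22}=\bar h_{33}$; in particular $\bar h$ is diagonal, which is the assertion of the lemma, and one obtains $I_2=I_3$ for free, reflecting that the metric stays Berger along $\gamma$. The normalization $I_i(1)=1$ is then read off from the boundary condition $\bar h_{ij}(1)=g^0_{ij}$ in $(\ref{equn_boundaryvalue1})$, and diagonality at the endpoints $x=0,1$ follows by continuity. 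This is consistent with, and is the finite counterpart of, the preceding computation that the off-diagonal components $R_{ij}(\bar h)$ vanish at $\theta_0$, which is precisely the infinitesimal shadow of this $U(1)$-symmetry.

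The step I expect to be most delicate is the identification of the linearized isotropy action. One must verify, using the explicit right-invariant fields, the fourth Killing field $Y_4$, and the chart $(\ref{equn_base})$, that $W$ not only vanishes along $\gamma$ but \emph{linearizes to an honest rotation} (and not a nilpotent shear) in the $(\theta^2,\theta^3)$-plane, and that this linearization is genuinely $x$-independent; the relative signs and normalizations of $Y_4$ versus $Y_1$ enter here and must be fixed carefully, since an incorrect sign produces a degenerate linearization incompatible with positive-definiteness. Once the isotropy rotation is correctly pinned down, the diagonalization of $\bar h$ is immediate from invariance of the quadratic form.
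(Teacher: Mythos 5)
For a genuine Berger metric ($\lambda_2=\lambda_3$) your argument is correct, and it is in substance the same as the paper's own treatment of that case: the paper compares the coefficients $C_{ij}^p$ (built from $X_1,X_2,X_3$) with the coefficients $\tilde C_{ij}^p$ (built with $X_1$ replaced by $X_4$) at $\theta_0$, and the resulting identity $-C_{qi}^mg_{mj}-C_{qj}^mg_{mi}=-\tilde C_{qi}^mg_{mj}-\tilde C_{qj}^mg_{mi}$ with $q=1$ is exactly the infinitesimal form of your statement that $W=X_4-X_1$ vanishes along the radial line $\{(x,\theta_0)\}$ and that $\bar h(x,\theta_0)$ is invariant under its linearized flow; both routes yield $\bar h_{12}=\bar h_{13}=\bar h_{23}=0$ and $\bar h_{22}=\bar h_{33}$. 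Two small points: your appeal to Lemma \ref{lem_zeroset} uses the boundary point $q$ as a ``zero,'' which that lemma does not literally cover, but your parallel appeal to Lemma \ref{lem_vectorposition} closes this (the components of $W$ are $x$-independent and coincide with those of $Y_4-Y_1$, which vanish at $\theta_0$, so $W\equiv 0$ on the whole radial line); and the explicit computation $W=-2\theta^3\partial_{\theta^2}+2\theta^2\partial_{\theta^3}$ settles your own worry about a nilpotent shear.

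The genuine gap is that the lemma, as stated and as used in the paper, is not restricted to Berger metrics: its conclusion allows three distinct functions $I_1,I_2,I_3$, the paper's proof contains a second case for generalized Berger metrics with $\lambda_1,\lambda_2,\lambda_3$ pairwise distinct, and that case is needed afterwards to derive the system $(\ref{equn_GBergerEinstein01})-(\ref{equn_GBergerBV01})$. For such metrics your mechanism is unavailable in principle: the isometry group is only $\text{SU}(2)$ acting simply transitively, there is no fourth Killing field $Y_4$, and the isotropy algebra at $q$ is trivial, so there is no isotropy rotation to invoke. The paper proves diagonality there by a completely different device: the constraint equations $(\ref{equn_EinsteinODEs2})$ are rewritten as a linear homogeneous first-order ODE system for the off-diagonal entries $(\bar h_{12},\bar h_{13},\bar h_{23})$ whose coefficient matrix is invertible near $x=0$ (this is where the distinctness of the $\lambda_i$ enters) with zero initial data, so ODE uniqueness forces $\bar h_{ij}\equiv 0$ for $i\neq j$ near $x=0$; then the second-order equations $(\ref{equn_EinsteinODEs3})$, together with the vanishing of $R_{ij}(\bar h)$ for $i\neq j$, propagate this to all of $[0,1]$. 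Your proposal would have to be supplemented by this (or an equivalent) Einstein-equation argument to establish the lemma in the generality in which the paper uses it.
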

\begin{proof}
If $\hat{g}$ is a Berger metric, there exists a left invariant Killing vector field $Y_4$. Without loss of generality, let $Y_4=Y_1$ at $\theta_0$. Then
\begin{align}\label{equn_Bergermetric1}
\hat{g}=\lambda_1 d(\theta^1)^2\,+\,\lambda_2(d(\theta^2)^2\,+\,d(\theta^3)^2).
\end{align}
$Y_1,...,Y_4$  extend to four Killing vector fields $X_1,...,X_4$ in $(M, g)$, which guarantees that the metric $\bar{h}$ has the form
\begin{align*}
\bar{h}=I_1(x) d(\theta^1)^2+ I_2(x)(d(\theta^2)^2+d(\theta^3)^2)
\end{align*}
under the coordinate $(x, \theta^1,\theta^2,\theta^3)$.

We give one way to see this. In the coordinate $(x,y,u,v)\in S^3\subseteq \mathbb{R}^4$, the left invariant vector field $X_4=(-y,x,-v,u)$. Under the coordinate $(\ref{equn_base})$,
\begin{align*}
X_4=\sqrt{1-(\theta^1)^2-(\theta^2)^2-(\theta^3)^2}\frac{\partial}{\partial \theta^1}-\theta^3\frac{\partial}{\partial \theta^2}+\theta^2\frac{\partial}{\partial \theta^3}.
\end{align*}
With $X_1$ replaced by $\tilde{X}_1=X_4$ in the definition of $Z_i^j$ and $C_{ij}^p$ we obtain $\tilde{Z}_i^j$ and $\tilde{C}_{ij}^p$ for $1\leq i,j,k\leq 3$. And instead of $(\ref{equn_tangentspace})$, we have
\begin{align}\label{equn_tangentspace2}
\frac{\partial}{\partial \theta^q}g_{ij}=-\tilde{C}_{qi}^mg_{mj}-\tilde{C}_{qj}^mg_{mi}.
\end{align}
By direct calculation, at $\theta_0\in S^3$, which is $(x,y,u,v)=(1,0,0,0)$,
\begin{align*}
\tilde{C}_{1j}^p=-C_{1j}^p,\,\,\,\tilde{C}_{ij}^p =\,C_{ij}^p
\end{align*}
for $i=2,3$ and $1\leq j,p\leq 3$. By $(\ref{equn_tangentspace})$ and $(\ref{equn_tangentspace2})$, we have
\begin{align*}
-C_{qi}^mg_{mj}-C_{qj}^mg_{mi}=-\tilde{C}_{qi}^mg_{mj}-\tilde{C}_{qj}^mg_{mi}
\end{align*}
Take $q=1$. Let $i=2$ and $j=3$, we have that
\begin{align*}
g_{22}=g_{33}.
\end{align*}
For the data $(i,j)=(1,2)$, $(i,j)=(1,3)$ and $(i,j)=(2,2)$ we have
\begin{align*}
g_{12}=g_{13}=g_{23}=0.
\end{align*}
One can check that these are all the symmetry we have. Therefore, the lemma holds for the case when $\hat{g}$ is a Berger metric.

If $\hat{g}$ is a generalized Berger metric
\begin{align}\label{equn_GBergermetric}
\hat{g}=\lambda_1 d(\theta^1)^2\,+\,\lambda_2 d(\theta^2)^2\,+\,\lambda_3d(\theta^3)^2.
\end{align}
with $\lambda_1,\,\lambda_2$ and $\lambda_3$ different from each other, we will use the Einstein equations. By $(\ref{equn_EinsteinODEs2})$, we have that
\begin{align*}
&-\bar{h}^{13}\frac{d}{d x}\bar{h}_{12}+\bar{h}^{12}\frac{d}{dx}\bar{h}_{13}+(\bar{h}^{22}-\bar{h}^{33})\frac{d}{dx}\bar{h}_{23}=\bar{h}^{23}(\frac{d}{dx}\bar{h}_{22}-\frac{d}{dx}\bar{h}_{33}),\\
&\bar{h}^{23}\frac{d}{d x}\bar{h}_{12}+(\bar{h}^{33}-\bar{h}^{11})\frac{d}{dx}\bar{h}_{13}-\bar{h}^{12}\frac{d}{dx}\bar{h}_{23}=\bar{h}^{13}(\frac{d}{dx}\bar{h}_{33}-\frac{d}{dx}\bar{h}_{11}),\\
&(\bar{h}^{11}-\bar{h}^{22})\frac{d}{dx}\bar{h}_{12}-\bar{h}^{23}\frac{d}{d x}\bar{h}_{13}+\bar{h}^{13}\frac{d}{dx}\bar{h}_{23}=\bar{h}^{12}(\frac{d}{dx}\bar{h}_{11}-\frac{d}{dx}\bar{h}_{22}),
\end{align*}
with $x$ the geodesic defining function. We consider $\bar{h}$ as a known $C^2$ solution to the system with the initial data
\begin{align*}
\bar{h}(0)=\lambda_1 d(\theta^1)^2\,+\,\lambda_2 d(\theta^2)^2\,+\,\lambda_3d(\theta^3)^2.
\end{align*}
Consider the system as a system of linear equations of $(\frac{d}{d x}\bar{h}_{12}, \frac{d}{dx}\bar{h}_{13}, \frac{d}{dx}\bar{h}_{23})$. Then on the left hand side, the matrix of coefficients is invertible at $x=0$ by our assumption. By definition, for $i\neq j$, on the right hand side of the system the element function $\bar{h}^{ij}(x)$ of the inverse matrix of $\bar{h}$ can be expressed as
\begin{align*}
\bar{h}^{ij}=b_{ij}^1(x)\bar{h}_{12}+b_{ij}^2(x)\bar{h}_{13}+b_{ij}^3(x)\bar{h}_{23}
\end{align*}
with $b_{ij}^k(x)$ some function of $C^2([0,1))$. Now the system can be viewed as the system of ordinary differential equations:
\begin{align*}
A(x)\left(\begin{matrix}\frac{d\bar{h}_{12}}{d x}\\ \frac{d\bar{h}_{13}}{d x}\\ \frac{d\bar{h}_{23}}{d x}\end{matrix}\right)=B(x)\left(\begin{matrix}\bar{h}_{12}\\ \bar{h}_{13}\\ \bar{h}_{23}\end{matrix}\right)
\end{align*}
with the matrix $A(x)$ invertible in $x\in[0,\varepsilon)$ for some $\varepsilon>0$ small, and $A(x)$ and $B(x)$ are of $C^1$. Now this system is a system of linear homogeneous ordinary differential equations of $(\bar{h}_{12}, \bar{h}_{13}, \bar{h}_{23})$, with initial data $\bar{h}_{ij}(0)=0$ for $i\neq j$. Then by uniqueness of the solution to the initial value problem, $\bar{h}_{ij}(x)=0$ in $x\in[0,\varepsilon]$ for $i\neq j$. Since $R_{ij}(\bar{h})=0$ for $i\neq j$, similarly we consider the three equations in $(\ref{equn_EinsteinODEs3})$ when $i\neq j$ as a system of three second order linear homogeneous ordinary differential equations of $(\bar{h}_{12}, \bar{h}_{13}, \bar{h}_{23})$ in $x\in (0,1)$. Using the conclusion that $\bar{h}_{ij}(x)=0$ in $x\in[0,\varepsilon]$ for $i\neq j$, by uniqueness of the solution to the initial value problem, we have that $\bar{h}_{ij}(x)=0$ in $x\in[0, 1]$ for $i\neq j$. At the center of gravity $p_0$ of the manifold, $x=e^{-r}=1$. This proves the lemma.


\end{proof}
Substituting all these data to the expression $(\ref{equn_Riccitensor})$ of the Ricci curvature tensor of $\bar{h}$, we have that
\begin{align*}
&R_{11}(\bar{h})=4-2(I_3^{-1}I_2+I_2^{-1}I_3)+2I_2^{-1}I_3^{-1}I_1^2,\\
&R_{22}(\bar{h})=4-2(I_1^{-1}I_3+I_3^{-1}I_1)+2I_1^{-1}I_3^{-1}I_2^2,\\
&R_{33}(\bar{h})=4-2(I_1^{-1}I_2+I_2^{-1}I_1)+2I_1^{-1}I_2^{-1}I_3^2,
\end{align*}
Denote $K=\text{det}(\bar{h}_{ij})=I_1I_2I_3$.

Therefore, for a Berger metric $\hat{g}$, the system $(\ref{equn_EinsteinODEs1})-(\ref{equn_EinsteinODEs3})$ becomes
\begin{align*}
&\frac{d}{d x}[x(1-x^2)(I_1^{-1}\frac{d}{d x}I_1+2I_2^{-1}\frac{d}{d x}I_2)]+\frac{1}{2}x(1-x^2)[(I_1^{-1}\frac{d}{d x}I_1)^2+2(I_2^{-1}\frac{d}{d x}I_2)^2]\\
&-2(I_1^{-1}\frac{d}{d x}I_1+2I_2^{-1}\frac{d}{d x}I_2)= 0,\\
&-\frac{1}{8}x(1-x^2)^2I_1''+(\frac{1}{2}x^2+\frac{1}{4})(1-x^2)I_1'+\frac{1}{8}x(1-x^2)^2I_1^{-1}(I_1')^2\\ &+\frac{1}{8}(1+x^2)(1-x^2)(2I_2^{-1}I_2'+I_1^{-1}I_1')I_1 -\frac{1}{16}x(1-x^2)^2(2I_2^{-1}I_2'+I_1^{-1}I_1')I_1'-2xI_1+2xI_2^{-2}I_1^2=0,\\
&-\frac{1}{8}x(1-x^2)^2I_2''+(\frac{1}{2}x^2+\frac{1}{4})(1-x^2)I_2'+\frac{1}{8}(1+x^2)(1-x^2)(2I_2'+I_1^{-1}I_2I_1')\\
&-\frac{1}{16}x(1-x^2)^2I_1^{-1}I_2'I_1'-2xI_2+x(4-2I_2^{-1}I_1)=0,
\end{align*}
on $x\in [0,1]$ where $I_i'=\frac{d}{dx}I_i$, with the boundary condition
\begin{align}\label{equn_boundaryvaluediagn1}
\frac{I_1(0)}{I_2(0)}=\frac{\lambda_1}{\lambda_2},\,\,I_1(1)=I_2(1)=1,\,\,I_1'(0)=I_2'(0)=I_1'(1)=I_2'(1)=0.
\end{align}
Denote $\phi=\frac{I_2}{I_1}$, $y_1=\log(K)$ and $y_2=\log(\phi)$ so that
\begin{align*}
I_2=(K\phi)^{\frac{1}{3}},\,\,I_1=(K\phi^{-2})^{\frac{1}{3}}.
\end{align*}
Therefore, for the Berger metric $\hat{g}$ in $(\ref{equn_Bergermetric1})$, the boundary value problem of the Einstein metrics becomes
\begin{align}
&\label{equn_BergerEinstein01}y_1''+\frac{1}{6}(y_1')^2+\frac{1}{3}(y_2')^2-x^{-1}(1+3x^2)(1-x^2)^{-1}y_1'=0,\\
&\label{equn_BergerEinstein02}y_1''+\frac{1}{2}(y_1')^2-x^{-1}(5+7x^2)(1-x^2)^{-1}y_1'+8(1-x^2)^{-2}(6-8K^{-\frac{1}{3}}\phi^{-\frac{1}{3}}+2K^{-\frac{1}{3}}\phi^{-\frac{4}{3}})=0,\\
&\label{equn_BergerEinstein03}y_2''+\frac{1}{2}y_1'y_2'-2x^{-1}(1+2x^2)(1-x^2)^{-1}y_2'+32(1-x^2)^{-2}K^{-\frac{1}{3}}\phi^{-\frac{1}{3}}(\phi^{-1}-1)=0.
\end{align}
for $y_1(x),y_2(x)\in C^{\infty}([0,1])$ with the boundary condition
\begin{align}\label{equn_BergerBV01}
\phi(0)=\frac{\lambda_2}{\lambda_1},\,\,K(1)=\phi(1)=1,\,\,y_1'(0)=y_2'(0)=y_1'(1)=y_2'(1)=0.
\end{align}
Combining $(\ref{equn_BergerEinstein01})$ and $(\ref{equn_BergerEinstein02})$, we have
\begin{align}\label{equn_BergerEinstein04}
(y_1')^2-(y_2')^2-12x^{-1}(1+x^2)(1-x^2)^{-1}y_1'+48(1-x^2)^{-2}(3-4(K\phi)^{-\frac{1}{3}}+K^{-\frac{1}{3}}\phi^{-\frac{4}{3}})=0.
\end{align}
By $(\ref{equn_expansion1})$, we have the expansion of $y_1$ and $y_2$ at $x=0$, which can also be done directly using the system $(\ref{equn_BergerEinstein01})-(\ref{equn_BergerEinstein03})$ and the boundary data $(\ref{equn_BergerBV01})$. Let $\Phi(x)$ be the function on the left hand side of the equation $(\ref{equn_BergerEinstein04})$. Take derivative of $\Phi$ and use the equations $(\ref{equn_BergerEinstein02})$ and $(\ref{equn_BergerEinstein03})$ we have
\begin{align}\label{equn_Berger2-21}
\Phi'+(y_1'-4x^{-1}(1+2x^2)(1-x^2)^{-1})\Phi=0.
\end{align}
Consider $y_1'$ as a given function. By the expansion $(\ref{equn_expansion1})$, especially that $y_1'(0)=0$ and $\text{tr}_{\hat{g}}g^{(3)}=\frac{1}{4}(I_1(0)^{-1}I_1^{(3)}+2I_2(0)^{-1}I_2^{(3)})=0$, $(\ref{equn_Berger2-21})$ has a unique solution $\Phi=0$, which is $(\ref{equn_BergerEinstein04})$. Therefore, $(\ref{equn_BergerEinstein02})$ and $(\ref{equn_BergerEinstein03})$ combining with the expansion of the Einstein metric imply $(\ref{equn_BergerEinstein04})$. Similarly, any two of the equations $(\ref{equn_BergerEinstein01})-(\ref{equn_BergerEinstein04})$ combining with the boundary expansion of the Einstein metric give the other two equations. Note that the coefficients of the expansion of the metric can be solved inductively by the equations $(\ref{equn_BergerEinstein02})-(\ref{equn_BergerEinstein03})$ and the initial data $(\ref{equn_BergerBV01})$ before the order $x^3$.

For a generalized Berger metric $\hat{g}$ in $(\ref{equn_GBergermetric})$ where $\lambda_1, \lambda_2$ and $ \lambda_3$ differ from one another, the system $(\ref{equn_EinsteinODEs1})-(\ref{equn_EinsteinODEs3})$ becomes
\begin{align*}
&\,\,\frac{d}{dx}(x(1-x^2)\sum_{i=1}^3I_i^{-1}I_i')+\frac{1}{2}x(1-x^2)\sum_{i=1}^3(I_i^{-1}I_i')^2 - 2\sum_{i=1}^3I_i^{-1}I_i'= 0,\\
&-\frac{1}{8}x(1-x^2)^2I_1''+(\frac{1}{2}x^2+\frac{1}{4})(1-x^2)I_1'+\frac{x(1-x^2)^2}{8}I_1^{-1}(I_1')^2+\frac{1}{8}(1-x^4)(I_1^{-1}I_1'+I_2^{-1}I_2'+I_3^{-1}I_3')I_1\\
&-\frac{1}{16}x(1-x^2)^2(I_1^{-1}I_1'+I_2^{-1}I_2'+I_3^{-1}I_3')I_1'-2xI_1+x[4-2(I_3^{-1}I_2+I_2^{-1}I_3)+2(I_2^{-1}I_3^{-1}I_1^2)]=0,\\
&-\frac{1}{8}x(1-x^2)^2I_2''+(\frac{1}{2}x^2+\frac{1}{4})(1-x^2)I_2'+\frac{x(1-x^2)^2}{8}I_2^{-1}(I_2')^2+\frac{1}{8}(1-x^4)(I_1^{-1}I_1'+I_2^{-1}I_2'+I_3^{-1}I_3')I_2\\
&-\frac{1}{16}x(1-x^2)^2(I_1^{-1}I_1'+I_2^{-1}I_2'+I_3^{-1}I_3')I_2'-2xI_2+x[4-2(I_1^{-1}I_3+I_3^{-1}I_1)+2(I_1^{-1}I_3^{-1}I_2^2)]=0,\\
&-\frac{1}{8}x(1-x^2)^2I_3''+(\frac{1}{2}x^2+\frac{1}{4})(1-x^2)I_3'+\frac{x(1-x^2)^2}{8}I_3^{-1}(I_3')^2+\frac{1}{8}(1-x^4)(I_1^{-1}I_1'+I_2^{-1}I_2'+I_3^{-1}I_3')I_3\\
&-\frac{1}{16}x(1-x^2)^2(I_1^{-1}I_1'+I_2^{-1}I_2'+I_3^{-1}I_3')I_3'-2xI_3+x[4-2(I_1^{-1}I_2+I_2^{-1}I_1)+2(I_1^{-1}I_2^{-1}I_3^2)]=0,\\
\end{align*}
for $x\in[0,1]$. Denote $\phi_1=\frac{ I_2}{I_1}$, $\phi_2=\frac{I_3}{I_2}$, $y_1=\log(K)$, $y_2=\log(\phi_1)$ and $y_3=\log(\phi_2)$, so that
\begin{align*}
I_1=(K\phi_1^{-2}\phi_2^{-1})^{\frac{1}{3}},\,\,I_2=(K\phi_1\phi_2^{-1})^{\frac{1}{3}},\,\,I_3=(K\phi_1\phi_2^2)^{\frac{1}{3}}.
\end{align*}

Therefore, for a generalized Berger metric $\hat{g}$ in $(\ref{equn_GBergermetric})$ with $\lambda_1, \lambda_2, \lambda_3$ different from one another, the boundary value problem of the Einstein metrics becomes
\begin{align}
&\label{equn_GBergerEinstein01}y_1''-x^{-1}(1+3x^2)(1-x^2)^{-1}y_1'+\frac{1}{6}(y_1')^2+\frac{1}{3}[(y_2')^2+y_2'y_3'+(y_3')^2]=0,\\
&\label{equn_GBergerEinstein02}y_1''-x^{-1}(5+7x^2)(1-x^2)^{-1}y_1'+\frac{1}{2}(y_1')^2+16(1-x^2)^{-2}[3-2K^{-\frac{1}{3}}(\phi_1^2\phi_2)^{\frac{1}{3}}-2K^{-\frac{1}{3}}(\phi_1^{-1}\phi_2)^{\frac{1}{3}}\\
&-2K^{-\frac{1}{3}}(\phi_1\phi_2^2)^{-\frac{1}{3}}+K^{-\frac{1}{3}}\phi_1^{-\frac{4}{3}}\phi_2^{-\frac{2}{3}}+K^{-\frac{1}{3}}\phi_1^{\frac{2}{3}}\phi_2^{-\frac{2}{3}}+K^{-\frac{1}{3}}\phi_1^{\frac{2}{3}}\phi_2^{\frac{4}{3}}]=0,
\notag \\
&\label{equn_GBergerEinstein03}y_2''-2x^{-1}(1+2x^2)(1-x^2)^{-1}y_2'+\frac{1}{2}y_1'y_2'+32(1-x^2)^{-2}K^{-\frac{1}{3}}[\phi_1^{\frac{2}{3}}\phi_2^{\frac{1}{3}}-\phi_1^{-\frac{1}{3}}\phi_2^{\frac{1}{3}}-\phi_1^{\frac{2}{3}}\phi_2^{-\frac{2}{3}}+\phi_1^{-\frac{4}{3}}\phi_2^{-\frac{2}{3}}]=0,\\
&\label{equn_GBergerEinstein04}y_3''-2x^{-1}(1+2x^2)(1-x^2)^{-1}y_3'+\frac{1}{2}y_1'y_3'+32(1-x^2)^{-2}K^{-\frac{1}{3}}[\phi_1^{-\frac{1}{3}}\phi_2^{\frac{1}{3}}-\phi_1^{-\frac{1}{3}}\phi_2^{-\frac{2}{3}}-\phi_1^{\frac{2}{3}}\phi_2^{\frac{4}{3}}+\phi_1^{\frac{2}{3}}\phi_2^{-\frac{2}{3}}]=0,
\end{align}
for $y_i(x)\in C^{\infty}([0,1])$ for $i=1,2,3$ with the boundary condition
\begin{align}\label{equn_GBergerBV01}
\phi_1(0)=\frac{\lambda_2}{\lambda_1},\,\,\phi_2(0)=\frac{\lambda_3}{\lambda_2},\,\,K(1)=\phi_1(1)=\phi_2(1)=1,\,\,y_i'(0)=y_i'(1)=0,\,\,\,\text{for}\,\,i=1,2,3.
\end{align}
Combining $(\ref{equn_GBergerEinstein01})$ and $(\ref{equn_GBergerEinstein02})$ we have
\begin{align}\label{equn_GBergerEinstein05}
&(y_1')^2-[(y_2')^2+y_2'y_3'+(y_3')^2]-12x^{-1}(1+x^2)(1-x^2)^{-1}y_1'+48(1-x^2)^{-2}[3-2K^{-\frac{1}{3}}(\phi_1^2\phi_2)^{\frac{1}{3}}\\
&-2K^{-\frac{1}{3}}(\phi_1^{-1}\phi_2)^{\frac{1}{3}}-2K^{-\frac{1}{3}}(\phi_1\phi_2^2)^{-\frac{1}{3}}+K^{-\frac{1}{3}}\phi_1^{-\frac{4}{3}}\phi_2^{-\frac{2}{3}}+K^{-\frac{1}{3}}\phi_1^{\frac{2}{3}}\phi_2^{-\frac{2}{3}}+K^{-\frac{1}{3}}\phi_1^{\frac{2}{3}}\phi_2^{\frac{4}{3}}]=0\notag,
\end{align}
Similar as the Berger metric case, if we denote the function on the left hand side of $(\ref{equn_GBergerEinstein05})$ as $\Phi$, take derivative of $\Phi$ and apply $(\ref{equn_GBergerEinstein02})$, $(\ref{equn_GBergerEinstein03})$ and $(\ref{equn_GBergerEinstein04})$, we have that
\begin{align*}
\Phi'+(y_1'-x^{-1}(1-x^2)^{-1}(4+8x^2))\Phi=0.
\end{align*}
Consider $y_1'$ as a given function. By the expansion of the metric and the initial data, similar as above, the equation has a unique solution $\Phi=0$. Therefore, $(\ref{equn_GBergerEinstein02})$, $(\ref{equn_GBergerEinstein03})$ and $(\ref{equn_GBergerEinstein04})$ combining with the expansion of the Einstein metric imply $(\ref{equn_GBergerEinstein05})$. Similarly, any three equations in the system of five equations $(\ref{equn_GBergerEinstein01})-(\ref{equn_GBergerEinstein04})$ and $(\ref{equn_GBergerEinstein05})$ containing at least one of $(\ref{equn_GBergerEinstein03})$ and $(\ref{equn_GBergerEinstein04})$, combining with the initial data imply the other two equations.

\section{Uniqueness of The Solution to The Boundary Value Problem $(\ref{equn_BergerEinstein01})-(\ref{equn_BergerBV01})$}

From now on, we study the uniqueness of solutions to the boundary value problem $(\ref{equn_BergerEinstein01})-(\ref{equn_BergerBV01})$. We start with the monotonicity of $y_1$ and $y_2$ for global solutions on $x\in[0,1]$. Note that for the special case $\phi(0)=1$, uniqueness of the solution is proved in \cite{Andersson-Dahl}\cite{Q}\cite{DJ}\cite{LiQingShi}. For $\phi(0)\neq 1$, by volume comparison theorem,
\begin{align*}
K(0)=\lim_{x\to 0}\frac{\text{det}(\bar{h})}{\text{det}(\bar{h}^{\mathbb{H}^4}(x))}=\lim_{r\to +\infty}\frac{\text{det}(g_r)}{\text{det}(g_r^{\mathbb{H}^4}(r))}<1,
\end{align*}
where \begin{align*}
g^{\mathbb{H}^4}=dr^2+g_r^{\mathbb{H}^4}(r)=x^{-2}(dx^2+\frac{(1-x^2)^2}{4}\bar{h}^{\mathbb{H}^4})
\end{align*}
is the Hyperbolic metric. Moreover it is proved in \cite{LiQingShi} that
\begin{align*}
(\frac{Y(S^3,[\hat{g}])}{Y(S^3,[g^{\mathbb{S}^3}])})^{\frac{3}{2}}\leq K(0)=\lim_{r\to +\infty}\frac{\text{det}(g_r)}{\text{det}(g_r^{\mathbb{H}^4}(r))},
\end{align*}
where $Y(S^3,[\hat{g}])$ is the Yamabe constant of $(S^3,[\hat{g}])$ and $g^{\mathbb{S}^n}$ is the round sphere metric.
\begin{lem}\label{lem_monotonicity01}
For the initial data $\phi(0)\neq 0$, we have $y_1'(x)>0$ for $x\in(0,1)$. Also, $y_2'(x)>0$ and $\phi(0)<\phi(x)<1$ for $x\in(0,1)$ if $\phi(0)<1$; while $y_2'(x)<0$ and $1<\phi(x)<\phi(0)$ for $x\in(0,1)$ if $\phi(0)>1$. That is to say, $K$ and $\phi$ are monotonic on $x\in(0,1)$.
\end{lem}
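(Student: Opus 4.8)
The plan is to read off the signs of $y_2'$ and $y_1'$ directly from the structure of $(\ref{equn_BergerEinstein01})$--$(\ref{equn_BergerEinstein03})$, combining a maximum-principle argument for $\phi$ with an integrating-factor argument for monotonicity, and using the two-point boundary data $(\ref{equn_BergerBV01})$ in an essential way. Throughout I write $\phi=e^{y_2}$ and recall that volume comparison gives $K(0)<1$, hence $y_1(0)<0$, while $y_1(1)=y_2(1)=0$ and $y_1'(0)=y_1'(1)=y_2'(0)=y_2'(1)=0$.

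First I would pin down the sign of $\phi-1$. Using $\phi^{-1}-1=-(1-e^{-y_2})=-c(x)\,y_2$ with $c(x)=(1-e^{-y_2})/y_2>0$, I rewrite $(\ref{equn_BergerEinstein03})$ as the linear homogeneous equation
\[
y_2''+P(x)\,y_2'-Q(x)\,c(x)\,y_2=0,
\]
where $P=\tfrac12 y_1'-2x^{-1}(1+2x^2)(1-x^2)^{-1}$ and $Q=32(1-x^2)^{-2}K^{-1/3}\phi^{-1/3}>0$, so that the zeroth-order coefficient $-Qc$ is strictly negative. The weak maximum principle then rules out a positive interior maximum, and the Hopf strong form (applied at an interior extremum, where all coefficients are finite) rules out a zero-valued interior maximum. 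Since $y_2(0)=\log\phi(0)<0$ and $y_2(1)=0$ when $\phi(0)<1$, this forces $y_2<0$, i.e. $\phi<1$, on $[0,1)$; the symmetric argument gives $\phi>1$ on $[0,1)$ when $\phi(0)>1$.

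Next I would obtain the monotonicity of $\phi$. Writing $(\ref{equn_BergerEinstein03})$ as $(\mu y_2')'=\mu\,Q\,(1-\phi^{-1})$ with $\mu=\exp(\int P\,dx)>0$, the sign of the right-hand side is now known from the previous step: it is strictly negative on $(0,1)$ when $\phi(0)<1$ and strictly positive when $\phi(0)>1$. Hence $\mu y_2'$ is strictly monotone on $(0,1)$; because $y_2'(1)=0$ and $\mu y_2'\to0$ as $x\to1^-$ (here $\mu\sim(1-x)^{3}$ while $y_2'\to0$), the limiting value $0$ at the right endpoint pins the sign of $\mu y_2'$, hence of $y_2'$, on all of $(0,1)$: $y_2'>0$ when $\phi(0)<1$ and $y_2'<0$ when $\phi(0)>1$. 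Together with $y_2(1)=0$ this upgrades the bound to $\phi(0)<\phi<1$ (resp. $1<\phi<\phi(0)$). Finally, for $y_1'$ I would use $(\ref{equn_BergerEinstein01})$ in the form $(\tilde\mu\,y_1')'=\tilde\mu\bigl(-\tfrac16(y_1')^2-\tfrac13(y_2')^2\bigr)$, with $\tilde\mu=\exp\bigl(-\int x^{-1}(1+3x^2)(1-x^2)^{-1}\,dx\bigr)>0$. The right-hand side is $\le0$, and it is \emph{strictly} negative on $(0,1)$ precisely because $y_2'\neq0$ there by the preceding step; thus $\tilde\mu y_1'$ is strictly decreasing, and since $y_1'(1)=0$ with $\tilde\mu y_1'\to0$ as $x\to1^-$ ($\tilde\mu\sim(1-x)^{2}$), we conclude $\tilde\mu y_1'>0$, hence $y_1'>0$, on $(0,1)$.

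I expect the main obstacle to be the book-keeping at the two endpoints, where the first-order coefficients and the factors $(1-x^2)^{-k}$ are singular. One must invoke the regularity of the metric at the center of gravity ($x=1$) and the boundary expansion $(\ref{equn_expansion1})$ at infinity ($x=0$), both encoded in the vanishing conditions $y_i'(0)=y_i'(1)=0$, to guarantee that $\mu y_2'$ and $\tilde\mu y_1'$ genuinely tend to $0$ as $x\to1^-$, and to ensure that the maximum principle is only ever invoked at interior points where the coefficients $P$, $Q$, $c$ are finite.
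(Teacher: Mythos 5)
Your proposal is correct, but it runs in the opposite direction from the paper's proof, and with different tools. The paper first proves $y_1'>0$ by contradiction: at the largest zero $x_1$ of $y_1'$ it integrates the identity $(x^{-1}(1-x^2)^2y_1')'+x^{-1}(1-x^2)^2[\tfrac16(y_1')^2+\tfrac13(y_2')^2]=0$ over $[x_1,1]$, where both boundary terms vanish, forcing $y_1'\equiv y_2'\equiv 0$ on $[x_1,1]$; it then invokes \emph{analyticity} of the solution to propagate this to $[0,1]$ and contradicts $y_1(0)=\log K(0)<0=y_1(1)$, which comes from the volume-comparison fact $K(0)<1$ recorded just before the lemma. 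Only afterwards does it treat $y_2'$, again by contradiction at the largest zero $x_2$, using $y_1'>0$ together with the local observation that $\phi$ is monotone on $(x_2,1)$ with $\phi(1)=1$, so $(\phi-1)y_2'<0$ there. You instead pin down the \emph{global} sign of $\phi-1$ first by a maximum principle applied to the linear form $y_2''+Py_2'-Qc\,y_2=0$ (a step with no analogue in the paper), then read off the sign of $y_2'$ from strict monotonicity of $\mu y_2'$ with vanishing limit at $x=1$, and only last deduce $y_1'>0$, using $y_2'\neq 0$ for strictness. Note that your integrating factors are in fact the paper's: $\tilde\mu=x^{-1}(1-x^2)^2$ exactly, and $\mu=K^{1/2}x^{-2}(1-x^2)^3$, which the paper uses later in $(\ref{equn_y2singularint2})$. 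What your route buys: it needs neither analyticity (nor discreteness of zeros) nor the external input $K(0)<1$ --- indeed it re-derives $K$ increasing, hence $K(0)<1$, as a byproduct --- so it is more self-contained and robust under weaker regularity. What the paper's route buys: it avoids maximum-principle machinery entirely, and its contradiction-integration identities are recycled almost verbatim in the uniqueness arguments of Theorems \ref{thm_boundarycondition01} and \ref{thm_BergermetricStability}. Your handling of the endpoint singularities (weights vanishing at $x=1$, coefficients only locally bounded on $(0,1)$) is the right bookkeeping and closes the one genuine gap such an argument could have.
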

\begin{proof}
Note that the zeroes of $y_1'$ are discrete on $x\in[0,1]$. Assume that there exists a zero of $y_1'$ on $x\in(0,1)$. Let $x_1$ be the largest zero of $y_1'$ on $x\in(0,1)$. Multiplying $x^{-1}(1-x^2)^2$ on both sides of $(\ref{equn_BergerEinstein01})$ and integrating the equation on $x\in[x_1,1]$, we have
\begin{align}
&\label{equn_singularint21}(x^{-1}(1-x^2)^2y_1')'+x^{-1}(1-x^2)^2[\frac{1}{6}(y_1')^2+\frac{1}{3}(y_2')^2]=0,\\
&\int_{x_1}^1x^{-1}(1-x^2)^2[\frac{1}{6}(y_1')^2+\frac{1}{3}(y_2')^2]dx=0.\notag
\end{align}
Therefore, $y_1'=0$ on $x\in[x_1,1]$. Since $y_1$ is analytic, $y_1'=0$ for $x\in[0,1]$, contradicting with the fact $y_1(0)<y_1(1)$. Therefore, there is no zero of $y_1'$ for $x\in(0,1)$. Therefore, $y_1'>0$ for $x\in (0,1)$.

If there is a zero of $y_2'$ on $x\in(0,1)$, since the zero of $y_2'$ is discrete, assume $x_2$ is the largest one on $x\in(0,1)$. Multiplying $x^{-2}(1-x^2)^3$ on both sides of $(\ref{equn_BergerEinstein03})$ and integrating the equation on $x\in[x_2,1]$, we have
\begin{align}
&\label{equn_y2singularint}(x^{-2}(1-x^2)^3y_2')'+\frac{1}{2}x^{-2}(1-x^2)^3y_1'y_2'+ 32x^{-2}(1-x^2)K^{-\frac{1}{3}}\phi^{-\frac{4}{3}}(1-\phi)=0,\\
&\int_{x_2}^1\frac{1}{2}x^{-2}(1-x^2)^3y_1'y_2'dx= \int_{x_2}^132x^{-2}(1-x^2)K^{-\frac{1}{3}}\phi^{-\frac{4}{3}}(\phi-1)dx.\notag
\end{align}
Since $y_2'=\phi^{-1}\phi'$ has no zeroes on $x\in(x_2,1)$, $\phi$ is monotonic on $(x_2,1)$. Note that we have proved $y_1'>0$ for $x\in(0,1)$. The fact $\phi(1)=1$ implies that $(\phi-1)y_2'<0$ for $x\in(x_2,1)$, contradicting with the integration unless $y_2'=0$ for $x\in(x_2,1)$ which implies that $y_2'=0$ for $x\in[0,1]$ by the differential equation. But it again contradicts with the condition $\phi(0)\neq1$. Therefore, there is no zero of $y_2'$ on $x\in(0,1)$. This completes the proof of the lemma.
\end{proof}
By $(\ref{equn_BergerEinstein04})$ and the initial value condition, we have
\begin{align}
y_1'&\label{equn_y1lowerorder1}=6x^{-1}(1-x^2)^{-1}[1+x^2-\sqrt{(1+x^2)^2+\frac{1}{36}x^2(1-x^2)^2(y_2')^2-\frac{4}{3}x^2(3-4(\phi K)^{-\frac{1}{3}}+K^{-\frac{1}{3}}\phi^{-\frac{4}{3}})}\,\,]\\
&=6x^{-1}(1-x^2)^{-1}[1+x^2-\sqrt{(1-x^2)^2+\frac{1}{36}x^2(1-x^2)^2(y_2')^2+\frac{4}{3}x^2 K^{-\frac{1}{3}}\phi^{-\frac{4}{3}}(4\phi-1)}\,\,].
\end{align}
Since $y_1'>0$ for $x\in(0,1)$, it is clear that
\begin{align}\label{inequn_boundaryYamabeconstant}
3-4(\phi K)^{-\frac{1}{3}}+K^{-\frac{1}{3}}\phi^{-\frac{4}{3}}>\frac{1}{36}x^2(1-x^2)^2(y_2')^2\geq 0\,\, \text{for}\,\, x\in(0,1).
\end{align}
Note that by $(\ref{equn_BergerEinstein02})$ and $(\ref{equn_BergerEinstein03})$,
\begin{align*}
&y_1''(0)=4(3-4K^{-\frac{1}{3}}(0)\phi(0)^{-\frac{1}{3}}+K^{-\frac{1}{3}}(0)\phi(0)^{-\frac{4}{3}}),\\
&y_2''(0)=32K^{-\frac{1}{3}}(0)\phi^{-\frac{4}{3}}(0)(1-\phi(0)).
\end{align*}
Therefore,
\begin{align*}
&\frac{d^2}{dx^2}(3-4(\phi K)^{-\frac{1}{3}}+K^{-\frac{1}{3}}\phi^{-\frac{4}{3}})\big|_{x=0}\\
=&\frac{4}{3}K^{-\frac{1}{3}}\phi^{-\frac{1}{3}}[y_1''(0)+y_2''(0)]-\frac{1}{3}K^{-\frac{1}{3}}(0)\phi^{-\frac{4}{3}}(0)(y_1''(0)+4y_2''(0))\\
=&\frac{4}{3}K^{-\frac{1}{3}}\phi^{-\frac{4}{3}}[(4\phi-1)(3-4(\phi K)^{-\frac{1}{3}}+K^{-\frac{1}{3}}\phi^{-\frac{4}{3}}) - 32 K^{-\frac{1}{3}}\phi^{-\frac{4}{3}}(1-\phi)^2]\big|_{x=0}.
\end{align*}
For $\phi(0)\neq 1$, if $3-4(\phi(0) K(0))^{-\frac{1}{3}}+K^{-\frac{1}{3}}(0)\phi^{-\frac{4}{3}}(0)=0$, then \begin{align*}
\frac{d^2}{dx^2}(3-4(\phi K)^{-\frac{1}{3}}+K^{-\frac{1}{3}}\phi^{-\frac{4}{3}})\big|_{x=0}<0.
\end{align*}
Since $\frac{d}{dx}(3-4(\phi K)^{-\frac{1}{3}}+K^{-\frac{1}{3}}\phi^{-\frac{4}{3}})\big|_{x=0}=0$, we have
\begin{align*}
(3-4(\phi K)^{-\frac{1}{3}}+K^{-\frac{1}{3}}\phi^{-\frac{4}{3}})<0,
\end{align*}
for $x>0$ small, contradicting with $(\ref{inequn_boundaryYamabeconstant})$. Therefore, if $\phi(0)\neq 1$, combining with $(\ref{inequn_boundaryYamabeconstant})$, we have
\begin{align}
3-4(\phi(0) K(0))^{-\frac{1}{3}}+K^{-\frac{1}{3}}(0)\phi^{-\frac{4}{3}}(0)>0.
\end{align}
This gives a lower bound of $y_1(0)$ for $\phi(0)>\frac{1}{4}$. Also, if $\phi(0)>\frac{1}{4}$, by Lemma \ref{lem_monotonicity01} and the equation $(\ref{equn_y1lowerorder1})$ we have for $x\in(0,1)$
\begin{align*}
y_1'<6x^{-1}(1-x^2)^{-1}[1+x^2-\sqrt{(1-x^2)^2}\,\,]=12x(1-x^2)^{-1}.
\end{align*}

\begin{thm}\label{thm_boundarycondition01}
We assume that there are two solutions $(y_{11},y_{12})$ and $(y_{21}, y_{22})$ to the boundary value problem $(\ref{equn_BergerEinstein01})-(\ref{equn_BergerBV01})$, with $y_{11}=\log(K_1),\,y_{12}=\log(\phi_1),\,y_{21}=\log(K_2)$ and $y_{22}=\log(\phi_2)$. Then if $K_1(0)=K_2(0)$, $\phi(0)\neq 1$ and $4>\phi(0)>\frac{1}{4}$, we have $(y_{11},y_{12})=(y_{21},y_{22})$ for $x\in[0,1]$.
\end{thm}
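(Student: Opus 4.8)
The plan is to treat this as a uniqueness statement for the \emph{singular} boundary value problem $(\ref{equn_BergerEinstein01})$--$(\ref{equn_BergerBV01})$ in which the only genuine freedom is the nonlocal coefficient, and then to kill that freedom by a comparison argument run between the two endpoints. First I would record that the two solutions carry identical Cauchy data at the conformal infinity: by $(\ref{equn_BergerBV01})$ one has $\phi_1(0)=\phi_2(0)=\phi(0)$ and $y_{11}'(0)=y_{12}'(0)=y_{21}'(0)=y_{22}'(0)=0$, while by hypothesis $K_1(0)=K_2(0)$, i.e. $y_{11}(0)=y_{21}(0)$. Feeding this into the inductive scheme that solves the Taylor coefficients from $(\ref{equn_BergerEinstein02})$--$(\ref{equn_BergerEinstein03})$ (valid up to the order $x^3$, as noted after $(\ref{equn_BergerEinstein04})$), the two solutions must agree through order $x^2$; since the order-$x^3$ term of the expansion $(\ref{equn_expansion1})$ is trace-free, $y_{i1}$ has no $x^3$ term, so the two solutions can first differ only in the $x^3$-coefficient of $y_{i2}$. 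Writing $w=y_{12}-y_{22}$ and $W=y_{11}-y_{21}$, this means $w=O(x^3)$ and $W=o(x^3)$ near $0$.

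Next I would reduce the coupled system to an essentially scalar problem for $w$. Using $(\ref{equn_y1lowerorder1})$ to express each $y_{i1}'$ explicitly through $(y_{i2}',K_i,\phi_i,x)$, and using $K_i'=K_i\,y_{i1}'$, the differences $W'$ and $K_1-K_2$ are recovered as weighted integrals of $w$ and $w'$; substituting these into the difference of the two copies of $(\ref{equn_BergerEinstein03})$ yields a second-order linear integro-differential equation of the form $w''+a(x)w'+b(x)w+(\text{integral terms})=0$, whose coefficients are assembled from both solutions and are singular at $x=0$ and at $x=1$ at the rates dictated by $(\ref{equn_BergerEinstein03})$.

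Then I would run a comparison/maximum-principle argument exactly in the style of $(\ref{equn_singularint21})$ and $(\ref{equn_y2singularint})$. Assuming $w\not\equiv0$, normalize so that $w>0$ for small $x>0$. Multiplying the $w$-equation by the integrating factor $x^{-2}(1-x^2)^3$ as in $(\ref{equn_y2singularint})$ and integrating over $[0,1]$, the boundary terms vanish because $w(1)=\log\phi_1(1)-\log\phi_2(1)=0$ and $w'(1)=0$ (from $y_{i2}'(1)=0$), while the weight suppresses the contribution at $x=0$; the equal values $K_1(0)=K_2(0)$ and $K_1(1)=K_2(1)=1$ make the coupling boundary terms vanish as well. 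The restriction $\tfrac14<\phi(0)<4$ enters precisely here: via $(\ref{inequn_boundaryYamabeconstant})$, the positivity $3-4(\phi K)^{-\frac13}+K^{-\frac13}\phi^{-\frac43}>0$, together with the monotonicity $y_1'>0$ and the sign of $y_2'$ supplied by Lemma \ref{lem_monotonicity01}, force the surviving zeroth-order term to have a fixed sign, so the integral of a manifestly signed quantity must vanish and hence $w\equiv0$. Once $w\equiv0$ the $x^3$-coefficients of $y_{i2}$ coincide, and the recursion propagates equality of all coefficients; by analyticity $(y_{11},y_{12})=(y_{21},y_{22})$ on $[0,1]$.

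The hard part will be closing the comparison in the presence of the genuine coupling between $y_{i1}$ and $y_{i2}$: unlike the single-equation monotonicity of Lemma \ref{lem_monotonicity01}, the $w$-equation carries inhomogeneous terms coming from $W$ and $K_1-K_2$, and one must show these cannot spoil the sign of the integrand. Controlling them requires using $K_1(0)=K_2(0)$ to anchor the $K$-difference and exploiting the explicit constraint $(\ref{equn_y1lowerorder1})$ so that $W$ is subordinate to $w$; establishing the correct \emph{uniform} sign of the effective zeroth-order coefficient on all of $(0,1)$ --- not merely near the endpoints --- is the crux, and is where the hypotheses $K_1(0)=K_2(0)$, $\phi(0)\neq1$ and $\tfrac14<\phi(0)<4$ are all consumed.
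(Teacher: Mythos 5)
Your setup is correct --- with $K_1(0)=K_2(0)$ and the boundary conditions, the two solutions can first differ only in the third-order (nonlocal) coefficient, so $w=y_{12}-y_{22}=O(x^3)$ and $W=y_{11}-y_{21}$ is of higher order (in fact $O(x^5)$, as the paper computes via $y_1^{(5)}$) --- but the argument has a genuine gap, and one concrete step is wrong. The wrong step: you assert that after multiplying by $x^{-2}(1-x^2)^3$ and integrating over $[0,1]$, ``the weight suppresses the contribution at $x=0$.'' It does not. Since $w'=3\bigl(y_{12}^{(3)}-y_{22}^{(3)}\bigr)x^2+o(x^2)$ with $y_{i2}^{(3)}=12(K(0)\phi(0))^{-\frac13}a_i$, the boundary term produced by integration by parts converges, as $x\to 0$, to a \emph{nonzero} multiple of $a_1-a_2$. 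Far from being negligible, this term is the engine of the paper's proof: there one multiplies $(\ref{equn_BergerEinstein03})$ by $x^{-2}(1-x^2)^3K^{\frac12}$ (the extra factor $K^{\frac12}$ is needed to absorb the cross term $\tfrac12 y_1'y_2'$, which your weight alone does not remove), takes the difference of the two solutions, and integrates from $0$ to the \emph{first zero} $x_2$ of $z_2'=w'$, obtaining exactly three terms --- the boundary term at $x_2$, the boundary term proportional to $a_1-a_2$ at $0$, and an integral --- which are then shown to share one sign, giving the contradiction.

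The gap: the part you defer as ``the crux'' --- a uniform sign of the effective zeroth-order term on all of $(0,1)$ --- is precisely what is \emph{not} available, and the paper never establishes such a thing; it avoids the need for it by (i) invoking Biquard's unique continuation theorem to reduce to the case where the nonlocal coefficients genuinely differ, $a_1\neq a_2$, so that the leading behavior of $z_1'$ and $z_2'$ near $x=0$ has known sign; (ii) proving a zero-ordering claim ($z_1'$ cannot vanish before the first zero of $z_2'$) by means of the differenced constraint identity $(\ref{equn_differencez1})$ coming from $(\ref{equn_y1lowerorder1})$ together with Lemma \ref{lem_monotonicity01}, which is where $\phi>\tfrac14$ enters (positivity and monotonicity of $4\phi^{-\frac13}-\phi^{-\frac43}$); and (iii) integrating only on $[0,x_2]$, where those sign relations persist. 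A single integration over all of $[0,1]$, as you propose, cannot work as stated: beyond the first zeros of $z_1'$ and $z_2'$ the signs of $w'$, $W'$, $K_1-K_2$, $\phi_1-\phi_2$ may all flip, which is exactly why the paper anchors its integration intervals at extremal zeros of the derivative differences. Two smaller points: your reduction to a single integro-differential equation for $w$ discards the constraint $(\ref{equn_y1lowerorder1})$ in the form actually used (as the algebraic identity $(\ref{equn_differencez1})$ for $z_1'$, not as an integral representation), and your closing appeal to analyticity at $x=0$ is unjustified, since the compactified solution is smooth but not analytic at the boundary --- this is precisely why the paper needs Biquard's theorem rather than a Taylor-coefficient recursion.
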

\begin{proof}
In the expansion of $g_x$, denote $g^{(3)}=\frac{1}{4}\bar{h}^{(3)}=\text{diag}(- 2\phi(0)^{-1}a,a,a)$ the trace-free nonlocal term. If $K(0),\,\phi(0)$ and $a$ are fixed, then Biquard \cite{Biquard2} proved that the Einstein metric is unique in a neighborhood of the boundary. If the global solution exists, by analytic extension, it must be unique.

So, now we assume that for $(y_{11}, y_{12})$ and $(y_{21},y_{22})$, the nonlocal terms in the expansion are different, with two different numbers $a_1$ and $a_2$ instead of $a$ in the diagonal of $g^{(3)}$ corresponding to the two metrics. With out loss of generality, assume $a_1<a_2$. Denote $z_1=y_{11}-y_{21}$ and $z_2=y_{12}-y_{22}$.

Assume $y_i$ has the expansion $y_i(x)=y_i(0)+\displaystyle\sum_{k=2}^{+\infty}x^ky_i^{(k)}$ at $x=0$ for $i=1,2$. By taking derivatives on both sides of $(\ref{equn_BergerEinstein01})-(\ref{equn_BergerEinstein03})$, we can solve $y_1^{(k)}(0)$ and $y_2^{(k)}$ inductively using $a$ in $g^{(3)}$. Note that $y_1^{(3)}(0)=0$. Also, the first term that depends on $a$ in the expansion of $y_1$ at $x=0$ is $y_1^{(5)}=-\frac{4}{15}y_2^{(2)}y_2^{(3)}$. By $(\ref{equn_BergerEinstein03})$,
\begin{align}
y_2^{(2)}=16K(0)^{-\frac{1}{3}}\phi(0)^{-\frac{4}{3}}(1-\phi(0)).
\end{align}
Also,
\begin{align}
y_2^{(3)}=I_2^{-1}(0)I_2^{(3)}(0)-I_1^{-1}(0)I_1^{(3)}(0)=12(K(0)\phi(0))^{-\frac{1}{3}}a.
\end{align}
Therefore,
\begin{align*}
y_1^{(5)}=-\frac{256}{5}K(0)^{-\frac{2}{3}}\phi(0)^{-\frac{5}{3}}(1-\phi(0))a.
\end{align*}

Since $(y_{11}(x),y_{12}(x))=(y_{21}(x), y_{22}(x))$ for $x=0,1$, by the mean value theorem there must be zero points of $z_1'$ and $z_2'$ in $x\in(0,1)$. We {\bf claim} that $z_1'$ can not achieve its zero before the first zero of $z_2'$. Assume otherwise. Let $x_1$ be the first zero of $z_1'$ on $(0,1)$. Then $z_1'\neq 0$ and $z_2'\neq 0$ in $(0, x_1)$. Since $z_1'(x)=5(y_{11}^{(5)}-y_{21}^{(5)})x^4+o(x^4)$ and $z_2'=3(y_{21}^{(3)}-y_{22}^{(3)})x^2+o(x^2)$, we have that $z_1'z_2'(1-\phi(0))<0$ for $x\in(0, x_1)$ and $(K_1-K_2)(\phi_1-\phi_2)(1-\phi(0))<0$ for $x\in(0,x_1]$. We substitute the two solutions to $(\ref{equn_y1lowerorder1})$ and take difference of the two equations obtained to have
\begin{align}\label{equn_differencez1}
z_1'=\frac{-[(y_{12}'+y_{22}')z_2'+48(1-x^2)^{-2}\big(K_1^{-\frac{1}{3}}(4\phi_1^{-\frac{1}{3}}-\phi_1^{-\frac{4}{3}})-K_2^{-\frac{1}{3}}(4\phi_2^{-\frac{1}{3}}-\phi_2^{-\frac{4}{3}})\big)]}{\sqrt{36x^{-2}+(y_{12}')^2+48\frac{K_1^{-\frac{1}{3}}(4\phi_1^{-\frac{1}{3}}-\phi_1^{-\frac{4}{3}})}{(1-x^2)^2}}+\sqrt{36x^{-2}+(y_{22}')^2+48\frac{K_2^{-\frac{1}{3}}(4\phi_2^{-\frac{1}{3}}-\phi_2^{-\frac{4}{3}})}{(1-x^2)^2}}}.
\end{align}
By Lemma \ref{lem_monotonicity01}, at $x_1$ the two terms in the numerator on the right hand side of $(\ref{equn_differencez1})$ have the same sign and the right hand side is non-zero, contradicting with $z_1'(x_1)=0$. This proves the {\bf claim}.

Multiplying $x^{-2}(1-x^2)^3K^{\frac{1}{2}}$ on both sides of $(\ref{equn_BergerEinstein03})$, we have
\begin{align}\label{equn_y2singularint2}
(x^{-2}(1-x^2)^3K^{\frac{1}{2}}y_2')'+32x^{-2}(1-x^2)K^{\frac{1}{6}}\phi^{-\frac{4}{3}}(1-\phi)=0.
\end{align}
Substituting the two solutions to $(\ref{equn_y2singularint2})$ and taking difference of the equations obtained, we have
\begin{align*}
[x^{-2}(1-x^2)^3(K_1^{\frac{1}{2}}y_{12}'-K_2^{\frac{1}{2}}y_{22}')]'+32x^{-2}(1-x^2)[K_1^{\frac{1}{6}}\phi_1^{-\frac{4}{3}}(1-\phi_1)-K_2^{\frac{1}{6}}\phi_2^{-\frac{4}{3}}(1-\phi_2)]=0.
\end{align*}
Let $x_2$ be the first zero of $z_2'$ on $x\in(0,1)$. Then by the {\bf claim} above, $z_1'(x)\neq0$ and also the expression of $y_1^{(5)}$ tells that $z_1'(x)$ is of the same sign as $-(1-\phi(0))(a_1-a_2)$, for $x\in(0,x_2)$. 
Integrating the equation on $x\in[0,x_2]$, we have
\begin{align*}
&x_2^{-1}(1-x^2)^3(K_1^{\frac{1}{2}}(x_2)-K_2^{\frac{1}{2}}(x_2))y_{12}'(x_2)-12K(0)^{\frac{1}{6}}\phi(0)^{-\frac{1}{3}}(a_1-a_2)\\
&+32\int_0^{x_2}x^{-2}(1-x^2)^2[K_1^{\frac{1}{6}}(\phi_1^{-\frac{4}{3}}-\phi_1^{-\frac{1}{3}})-K_2^{\frac{1}{6}}(\phi_2^{-\frac{4}{3}}-\phi_2^{-\frac{1}{3}})]dx=0,
\end{align*}
Note that the three terms on the left hand side are of the same sign and non-zero, which is a contradiction. Therefore, $z_1'(x)\neq 0$ and $z_2'(x)\neq0$ for $x\in(0,1)$, which is a contradiction to the mean value theorem for $y_1$ and $y_2$ on $x\in[0,1]$. Therefore, $a_1=a_2$ and $(y_{11},y_{12})=(y_{21},y_{22})$ on $x\in[0,1]$. This completes the proof of the theorem.
\end{proof}
Now to show the uniqueness of the solution to $(\ref{equn_BergerEinstein01})-(\ref{equn_BergerBV01})$, we only need to rule out the possibility of the existence of two solutions $(y_{11}, y_{12})$ and $(y_{21}, y_{22})$ with $y_{11}=\log(K_1),\,y_{12}=\log(\phi_1),\,y_{21}=\log(K_2)\,$ and $y_{22}=\log(\phi_2)$ such that $K_1(0)\neq K_2(0)$, for $\phi(0)\neq 1$ and $4>\phi(0)>\frac{1}{4}$. Without loss of generality, assume $K_1(0)> K_2(0)$.

Denote $z_1=y_{11}-y_{21}$ and $z_2=y_{12}-y_{22}$. As a preparation, we start with a lemma about zeroes of $z_1'$ and $z_2'$.

\begin{lem}\label{lem_zeroz1z2}
For any two zeroes $0<x_1<x_2\leq 1$ of $z_1'$ so that there is no zero of $z_1'$ on the interval $x\in(x_1,x_2)$, there exists a point $x_3\in(x_1,x_2)$ so that
\begin{align}\label{inequn_signz201}
(y_{12}'+y_{22}')z_1'z_2'\big|_{x=x_3}<0.
\end{align}
Also, for any zero $0<x_2\leq 1$ of $z_1'$, there exists $\varepsilon>0$ so that for any $x_2-\varepsilon < x <x_2$, we have
\begin{align}\label{inequn_signz202}
(y_{12}'(x)+y_{22}'(x))z_1'(x)z_2'(x)>0.
\end{align}
\end{lem}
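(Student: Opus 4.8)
The plan is to run the whole argument off a single self-adjoint identity obtained by subtracting equation (\ref{equn_BergerEinstein01}) for the two solutions. First I would put (\ref{equn_BergerEinstein01}) in its divergence form (\ref{equn_singularint21}), namely $(Qy_1')' + Q[\tfrac16(y_1')^2 + \tfrac13(y_2')^2]=0$ with $Q=x^{-1}(1-x^2)^2>0$, write it for both $(y_{11},y_{12})$ and $(y_{21},y_{22})$, and subtract. Factoring the quadratic differences through $a^2-b^2=(a+b)(a-b)$ linearizes the result and yields
\begin{equation*}
(Q z_1')' + Q\Big[\tfrac16(y_{11}'+y_{21}')z_1' + \tfrac13(y_{12}'+y_{22}')z_2'\Big]=0,\qquad Q=x^{-1}(1-x^2)^2>0. \tag{$\dagger$}
\end{equation*}
Every sign statement in the lemma will be read off from $(\dagger)$, with the monotonicity $y_{11}',y_{21}'>0$ on $(0,1)$ of Lemma \ref{lem_monotonicity01} fixing the sign of the $z_1'$ coefficient.

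For the first assertion I would integrate $(\dagger)$ over $[x_1,x_2]$. Since $z_1'(x_1)=z_1'(x_2)=0$, the boundary term $[Qz_1']_{x_1}^{x_2}$ vanishes and I am left with $\int_{x_1}^{x_2}Q[\tfrac16(y_{11}'+y_{21}')z_1'+\tfrac13(y_{12}'+y_{22}')z_2']\,dx=0$. Because there is no zero of $z_1'$ inside $(x_1,x_2)$, $z_1'$ keeps a single sign there; using $Q>0$ and $y_{11}'+y_{21}'>0$, the integral $\int Q(y_{11}'+y_{21}')z_1'\,dx$ carries that same sign, so $\int Q(y_{12}'+y_{22}')z_2'\,dx$ carries the opposite one. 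Hence $(y_{12}'+y_{22}')z_2'$ must attain the opposite sign to $z_1'$ at some $x_3\in(x_1,x_2)$, and multiplying by $z_1'(x_3)$ gives exactly (\ref{inequn_signz201}).

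For the second assertion I would remove the $z_1'$ term from $(\dagger)$ with the positive integrating factor $\mu=\exp(\tfrac16\int(y_{11}'+y_{21}'))$, turning it into $(\mu Q z_1')'=-\tfrac13\,\mu Q(y_{12}'+y_{22}')z_2'$. Integrating from $x$ up to $x_2$ and noting that $\mu Q z_1'$ vanishes at $x_2$ in both cases — when $x_2<1$ because $z_1'(x_2)=0$, and when $x_2=1$ because $Q(1)=0$ — I obtain $\mu(x)Q(x)z_1'(x)=\tfrac13\int_x^{x_2}\mu Q(y_{12}'+y_{22}')z_2'\,dt$. For $x$ in a small left neighborhood of $x_2$ the analytic factor $(y_{12}'+y_{22}')z_2'$ keeps a fixed sign $\epsilon$, so the right side has sign $\epsilon$; since $\mu Q>0$ on $(0,1)$ this forces $z_1'(x)$ to have sign $\epsilon$ as well, whence $(y_{12}'+y_{22}')z_1'z_2'=z_1'\cdot[(y_{12}'+y_{22}')z_2']$ has sign $\epsilon^2>0$, which is (\ref{inequn_signz202}).

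The delicate points, and where I expect the real work, are the two degeneracies of the second part. If $(y_{12}'+y_{22}')z_2'$ vanished identically near $x_2$, analyticity would spread this to all of $(0,1)$, and $(\dagger)$ together with the vanishing of $z_1'$ at the endpoints would then force $z_1'\equiv0$, i.e. $z_1\equiv z_1(1)=0$, contradicting $z_1(0)=\log(K_1(0)/K_2(0))>0$; so that case is vacuous and must be excluded at the outset. The endpoint $x_2=1$ also needs care because $Q$ degenerates there, but expressing the conclusion through the integrated identity $\mu Q z_1'=\tfrac13\int(\cdots)$, rather than pointwise through $z_1''$, makes the vanishing of $Q(1)$ help rather than hurt. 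The one structural input I would verify carefully is that the factorization leading to $(\dagger)$ is \emph{exactly} linear in $(z_1',z_2')$ — this is special to the form of (\ref{equn_BergerEinstein01}) and is precisely what lets the desired signs be extracted as squares.
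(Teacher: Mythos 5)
Your proof is correct, and its first half coincides with the paper's own argument: the paper likewise subtracts the divergence form (\ref{equn_singularint21}) of (\ref{equn_BergerEinstein01}) for the two solutions and integrates over $[x_1,x_2]$, using $y_{11}'+y_{21}'>0$ and the fixed sign of $z_1'$ to force the $z_2'$-term to take the opposite sign somewhere (your derivation in fact corrects an apparent typo in the paper's displayed identity, which writes $y_{21}'+y_{22}'$ where the exact factorization gives $y_{12}'+y_{22}'$, as in the lemma statement). For the second half your route genuinely differs in execution. The paper multiplies (\ref{equn_BergerEinstein01}) by the weight $x(1-x^2)$, which leaves a non-divergence term $-2z_1'$ and a quadratic $z_1'$-source; after integrating over $[x_2-\varepsilon,x_2]$ it must compare magnitudes, arguing that $\int_{x_2-\varepsilon}^{x_2}x(1-x^2)(y_{11}'+y_{21}')z_1'\,dx$ is of higher order than the boundary term $x(1-x^2)z_1'\big|_{x_2-\varepsilon}$ as $\varepsilon\to 0$ --- an asymptotic step that needs some care, especially at $x_2=1$ where the weight itself degenerates. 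Your integrating factor $\mu=\exp(\tfrac16\int(y_{11}'+y_{21}'))$ absorbs the $z_1'$ term entirely and yields the exact identity $\mu Q z_1'(x)=\tfrac13\int_x^{x_2}\mu Q\,(y_{12}'+y_{22}')z_2'\,dt$, from which the sign of $z_1'$ to the left of $x_2$ is read off with no order-of-magnitude estimate at all, and the endpoint case $x_2=1$ is handled automatically by $Q(1)=0$. Your explicit exclusion of the degenerate case $(y_{12}'+y_{22}')z_2'\equiv 0$ via $z_1(0)=\log(K_1(0)/K_2(0))>0$ is also a point the paper leaves implicit (its appeal to discreteness of the zeroes of $z_2'$ presupposes it); this use of $K_1(0)>K_2(0)$ is legitimate, since the lemma sits under exactly that standing assumption in the paper. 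In short, both arguments rest on the same linearized difference identity, but yours trades the paper's asymptotic comparison for an exact integral identity, which is a genuine simplification.
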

\begin{proof}
We substitute the solutions $(y_{11},y_{12})$ and $(y_{21},y_{22})$ to  $(\ref{equn_singularint21})$, take difference of the two equations obtained and integrate it on the interval $x\in[x_1,x_2]$ to have
\begin{align*}
\int_{x_1}^{x_2}x^{-1}(1-x^2)^2[\frac{1}{6}(y_{11}'+y_{21}')z_1'+\frac{1}{3}(y_{21}'+y_{22}')z_2'] dx=0.
\end{align*}
Since $y_{11}',y_{21}'>0$, we have that $(\ref{inequn_signz201})$ holds for some point $x_3\in(x_1,x_2)$.

Multiplying $x(1-x^2)$ to $(\ref{equn_BergerEinstein01})$, we have
\begin{align}\label{equn_regularint01}
-2y_1'+(x(1-x^2)y_1')'+x(1-x^2)[\frac{1}{6}(y_1')^2+\frac{1}{3}(y_2')^2]=0.
\end{align}
Now we substitute the solutions $(y_{11},y_{12})$ and $(y_{21},y_{22})$ to  $(\ref{equn_regularint01})$, take difference of the two equations and integrate it on the interval $x\in[x_2-\varepsilon,x_2]$ to have
\begin{align*}
2(z_1(x_2)-z_1(x_2-\varepsilon))+(x(1-x^2)z_1')\big|_{x=x_2-\varepsilon}=\int_{x_2-\varepsilon}^{x_2}x(1-x^2)[\frac{1}{6}(y_{11}'+y_{21}')z_1'+\frac{1}{3}(y_{21}'+y_{22}')z_2'] dx.
\end{align*}
It is clear that the two terms on the left hand side are of the same sign for $\varepsilon>0$ small, since zeroes of $z_1'$ are discrete. Note that the first integral term on the right hand side is a higher order small term comparing to the second term on the left hand side, as the positive constant $\varepsilon\to 0$. Therefore, since zeroes of $z_2'$ are discrete, for $\varepsilon>0$ small  $(\ref{inequn_signz202})$ holds.
\end{proof}
\begin{thm}\label{thm_BergermetricStability}
There exists at most a unique solution to the boundary value problem $(\ref{equn_BergerEinstein01})-(\ref{equn_BergerBV01})\,$ for $\frac{1}{4}<\phi(0)<1$ and $1<\phi(0)<4$.
\end{thm}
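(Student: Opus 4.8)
The plan is to argue by contradiction, reducing everything to the sign structure of the difference of two solutions. By Theorem \ref{thm_boundarycondition01}, any two solutions sharing the same value $K(0)$ must coincide on $[0,1]$, so it suffices to exclude the existence of two solutions $(y_{11},y_{12})$ and $(y_{21},y_{22})$ with the same prescribed $\phi(0)\in(\tfrac14,4)\setminus\{1\}$ but $K_1(0)\neq K_2(0)$; without loss of generality $K_1(0)>K_2(0)$. Writing $z_1=y_{11}-y_{21}$ and $z_2=y_{12}-y_{22}$, the boundary data $(\ref{equn_BergerBV01})$ give $z_1(0)=\log\frac{K_1(0)}{K_2(0)}>0$, $z_1(1)=0$, $z_2(0)=z_2(1)=0$, and $z_1'(0)=z_1'(1)=z_2'(0)=z_2'(1)=0$. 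The cleanest target is to prove $z_1'>0$ throughout $(0,1)$, since this already contradicts $z_1(1)=0<z_1(0)$.

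First I would pin down the signs near $x=0$ from the Taylor expansion. Using $y_1''(0)=4\bigl(3-4K^{-\frac13}\phi^{-\frac13}+K^{-\frac13}\phi^{-\frac43}\bigr)\big|_{0}$ and $y_2''(0)=32K^{-\frac13}\phi^{-\frac43}(1-\phi)\big|_{0}$, together with the monotonicity of $t\mapsto t^{-\frac13}$ and the fact that $4-\phi(0)^{-1}>0$ for $\phi(0)>\tfrac14$, the leading coefficient of $z_1$ equals $-2\phi^{-\frac13}(4-\phi^{-1})(K_1^{-\frac13}-K_2^{-\frac13})>0$, so $z_1'>0$ for small $x>0$; similarly the leading coefficient of $z_2$ has sign $-\operatorname{sgn}(1-\phi(0))$. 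Combining this with Lemma \ref{lem_monotonicity01}, which gives $\operatorname{sgn}(y_{12}'+y_{22}')=\operatorname{sgn}(1-\phi(0))$ on $(0,1)$, I obtain $(y_{12}'+y_{22}')z_1'z_2'<0$ near $x=0$ in both cases $\phi(0)<1$ and $\phi(0)>1$.

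Next I would run the sign-tracking argument. Suppose $z_1'$ has a zero in $(0,1)$ and let $x_\ast$ be the first one, so $z_1'>0$ on $(0,x_\ast)$. By the second part of Lemma \ref{lem_zeroz1z2}, $(\ref{inequn_signz202})$ gives $(y_{12}'+y_{22}')z_1'z_2'>0$ just to the left of $x_\ast$, hence $(y_{12}'+y_{22}')z_2'>0$ there, whereas the previous paragraph gives $(y_{12}'+y_{22}')z_2'<0$ near $0$; thus $(y_{12}'+y_{22}')z_2'$ must vanish inside $(0,x_\ast)$. I would then feed this forced sign change into the two integrated identities already in hand: the difference of the first integral $(\ref{equn_differencez1})$, which expresses $z_1'$ up to a positive denominator as $-\bigl[(y_{12}'+y_{22}')z_2'+48(1-x^2)^{-2}\bigl(K_1^{-\frac13}(4\phi_1^{-\frac13}-\phi_1^{-\frac43})-K_2^{-\frac13}(4\phi_2^{-\frac13}-\phi_2^{-\frac43})\bigr)\bigr]$, and the weighted integral of the $z_2$-equation coming from $(\ref{equn_y2singularint2})$. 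Evaluating $(\ref{equn_differencez1})$ at $x_\ast$ forces the nonlinear $K$--$\phi$ term to balance the $(y_{12}'+y_{22}')z_2'$ term there, while integrating the difference of $(\ref{equn_y2singularint2})$ over $[0,x_\ast]$ and using $K_1(0)>K_2(0)$ with Lemma \ref{lem_monotonicity01} should show that the boundary and integral contributions all carry the same nonzero sign, contradicting $z_1'(x_\ast)=0$. This rules out a first interior zero of $z_1'$, so $z_1'>0$ on $(0,1)$ and the contradiction follows; the first part of Lemma \ref{lem_zeroz1z2} and $(\ref{inequn_signz201})$ handle the parallel bookkeeping if one instead tracks consecutive zeroes.

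The main obstacle is exactly this last sign-chasing step. The nonlinear term in $(\ref{equn_differencez1})$ and the integrand in the difference of $(\ref{equn_y2singularint2})$ both mix the differences of $K$ and of $\phi$, whose individual signs are controlled by Lemma \ref{lem_monotonicity01} but whose combination is not obviously signed, so the delicate point is to extract a definite sign from these mixed expressions. Moreover the whole argument must be run in parallel for $\phi(0)<1$ and $\phi(0)>1$, since the sign of $y_{12}'+y_{22}'$ flips; keeping the bookkeeping of the signs of $z_1'$, $z_2'$ and $y_{12}'+y_{22}'$ consistent across the zeroes supplied by Lemma \ref{lem_zeroz1z2}, so that the integral identities genuinely close up to a contradiction, is where the real work lies.
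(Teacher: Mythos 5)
Your reduction via Theorem \ref{thm_boundarycondition01} to the case $K_1(0)\neq K_2(0)$, your Taylor computations of $z_1''(0)$ and $z_2''(0)$, and your use of Lemma \ref{lem_monotonicity01} and of $(\ref{inequn_signz202})$ to force a zero of $z_2'$ inside $(0,x_\ast)$ are all correct, and they use the paper's toolkit. But the proof is not complete: the step you yourself flag as the ``main obstacle'' is precisely the step that constitutes the proof, and the route you propose for it breaks down. Evaluating $(\ref{equn_differencez1})$ at the \emph{first} zero $x_\ast$ of $z_1'$ yields no contradiction: it only says that $(y_{12}'+y_{22}')z_2'$ balances the nonlinear $K$--$\phi$ term there, and these two terms can genuinely have opposite signs. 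Concretely, for $\phi(0)<1$ one has $z_2<0$ near $0$, and even after $z_2'$ turns positive, $z_2(x_\ast)<0$ is possible; then $\phi_1<\phi_2$ and $K_1>K_2$ both push $K_1^{-1/3}(4\phi_1^{-1/3}-\phi_1^{-4/3})<K_2^{-1/3}(4\phi_2^{-1/3}-\phi_2^{-4/3})$ (the map $t\mapsto 4t^{-1/3}-t^{-4/3}$ is increasing on $(0,1)$), so the numerator of $(\ref{equn_differencez1})$ can vanish with both contributions nonzero. Likewise, integrating the difference of $(\ref{equn_y2singularint2})$ over $[0,x_\ast]$ is not legitimate as stated when $K_1(0)\neq K_2(0)$: the boundary term $x^{-2}(1-x^2)^3(K_1^{1/2}y_{12}'-K_2^{1/2}y_{22}')$ and the integral of $32x^{-2}(1-x^2)\bigl[K_1^{1/6}\phi_1^{-4/3}(1-\phi_1)-K_2^{1/6}\phi_2^{-4/3}(1-\phi_2)\bigr]$ each diverge as $x\to0$, because the $x^{-2}$ weight meets a difference that does not vanish at $x=0$; no sign can be read off before cancelling the singular parts. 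Note also that your argument never invokes $\phi(0)<4$, which must enter somewhere.

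The paper closes the argument by anchoring the sign bookkeeping at the \emph{other} endpoint. It takes $x_1$ to be the largest zero of $z_1'$ in $[0,1)$ and then $x_2$ the largest zero of $z_2'$ in $(x_1,1)$, whose existence follows from Lemma \ref{lem_zeroz1z2} applied at the zero $x=1$ of $z_1'$ (recall $y_{11}'(1)=y_{21}'(1)=0$). On the terminal interval $(x_2,1)$ the conditions $z_1(1)=z_2(1)=0$ convert derivative signs into function signs, $\mathrm{sgn}(K_1-K_2)=-\mathrm{sgn}(z_1')$ and $\mathrm{sgn}(\phi_1-\phi_2)=-\mathrm{sgn}(z_2')$; this is exactly the information unavailable near $x=0$, where $z_1(0)\neq0$. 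Combining this with $(\ref{inequn_signz202})$ at $x=1$, and with the fact that $t\mapsto t^{-4/3}-t^{-1/3}$ is strictly decreasing on $(0,4)$ (this is where $\phi(0)<4$ enters, since Lemma \ref{lem_monotonicity01} keeps $\phi_i$ between $\phi(0)$ and $1$), all four terms appearing in the difference of $(\ref{equn_preinterg02})$ integrated over $[x_2,1]$ --- namely $z_1'y_{12}'$, $y_{21}'z_2'$, $(K_1^{-1/3}-K_2^{-1/3})\phi_1^{-4/3}(1-\phi_1)$, and $K_2^{-1/3}\bigl[(\phi_1^{-4/3}-\phi_1^{-1/3})-(\phi_2^{-4/3}-\phi_2^{-1/3})\bigr]$ --- carry the same strict sign, so their integral cannot vanish; that is the contradiction, and the weights are integrable there since $x_2>0$. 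So the missing idea is to run the sign-chasing on the last interval abutting $x=1$, where both differences vanish, rather than forward from $x=0$, where one of them does not.
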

\begin{proof}
We argue by contradiction. Assume that there are two solutions $(y_{11}, y_{12})$ and $(y_{21}, y_{22})$ with $y_{11}=\log(K_1),\,y_{12}=\log(\phi_1),\,y_{21}=\log(K_2)\,$ and $y_{22}=\log(\phi_2)$. By Theorem \ref{thm_boundarycondition01}, $K_1(0)\neq K_2(0)$. Without loss of generality, assume $K_1(0)> K_2(0)$. Denote $z_1=y_{11}-y_{21}$ and $z_2=y_{12}-y_{22}$.

Let $x_1$ be the largest zero point of $z_1'$ on $x\in[0,1)$. We {\bf claim} that there exists a zero of $z_2'$ on $x\in( x_1, 1)$. First, if $x_1=0$, since $z_2(0)=z_2(1)=0$, by the mean value theorem, there exists $x_2\in(0,1)$ such that $z_2'(x_2)=0$. Second, if $x_1>0$, then by Lemma \ref{lem_zeroz1z2}, $z_2'$ changes signs on $x\in[x_1,1]$ and there exists $x_2\in(x_1,1)$ such that $z_2'(x_2)=0$. This proves the {\bf claim}. Let $x_2$ be the largest zero of $z_2'$ on $x\in(x_1, 1)$.

Multiplying $x^{-2}(1-x^2)^3$ on both sides of $(\ref{equn_BergerEinstein03})$ we have
\begin{align}\label{equn_preinterg02}
(x^{-2}(1-x^2)^3y_2')'+\frac{1}{2}x^{-2}(1-x^2)^3y_1'y_2'+32x^{-2}(1-x^2)K^{-\frac{1}{3}}\phi^{-\frac{4}{3}}(1-\phi)=0.
\end{align}
Now we substitute the two solutions into $(\ref{equn_preinterg02})$, take difference of the two equations obtained and integrate it on the interval $x\in[x_2,1]$ to have
\begin{align}
&\label{equn_singularint02}32\int_{x_2}^1x^{-2}(1-x^2)[(K_1^{-\frac{1}{3}}-K_2^{-\frac{1}{3}})\phi_{1}^{-\frac{4}{3}}(1-\phi_1)+K_2^{-\frac{1}{3}}((\phi_1^{-\frac{4}{3}}-\phi_1^{-\frac{1}{3}})-(\phi_2^{-\frac{4}{3}}-\phi_2^{-\frac{1}{3}}))] dx\\
&+\int_{x_2}^1\frac{1}{2}x^{-2}(1-x^2)^3(z_1'y_{12}'+y_{21}'z_2')dx=0.\notag
\end{align}
Since $z_1(1)=z_2(1)=0$, we have $(K_1-K_2)z_1'<0$ and $(\phi_1-\phi_2)z_2'<0$ on $x\in(x_2,1)$. Note that $y_{i1}'>0$ and $(1-\phi_i)y_{i2}'>0$ for $i=1,\,2$ in $(0,1)$ by Lemma \ref{lem_monotonicity01}. Since there is no zero of $z_1'$ and $z_2'$ on $x\in (x_2,1)$, by Lemma \ref{lem_zeroz1z2} we have that $z_1'y_{12}'$, $y_{21}'z_2'$, $(K_1^{-\frac{1}{3}}-K_2^{-\frac{1}{3}})(1-\phi_1)$ and $((\phi_1^{-\frac{4}{3}}-\phi_1^{-\frac{1}{3}})-(\phi_2^{-\frac{4}{3}}-\phi_2^{-\frac{1}{3}}))$ are of the same sign on $x\in(x_2,1)$, contradicting with $(\ref{equn_singularint02})$. Note that a solution to the boundary value problem is given explicitly by Pedersen in \cite{Pedersen}, therefore it must be the unique solution when it has non-positive sectional curvature. This completes the proof of the theorem.
\end{proof}

\begin{proof}{\it Proof of Theorem \ref{thm_someBergermetric}}
For the case $\phi(0)=1$ i.e., $\lambda_1=\lambda_2$ so that the conformal infinity is the round sphere, the theorem has been proved in \cite{Andersson-Dahl}\cite{Q}\cite{DJ}\cite{LiQingShi}.

Now we assume that $\lambda_1\neq \lambda_2.\,$ Let $q\in \partial M=S^3$ be a fixed point. By Theorem \ref{thm_definingfunction} and the discussion in Section \ref{section_CCEinsteinHCITY}, we have that the Einstein equations with prescribed conformal infinity which is the conformal class of the Berger metric, is equivalent to the boundary value problem $(\ref{equn_BergerEinstein01})-(\ref{equn_BergerBV01})$ along the geodesic connecting the center of gravity $p_0$ of $(M, g)$ and $q$. Then by Theorem \ref{thm_BergermetricStability}, up to isometries, the conformally compact Einstein metric is unique.
\end{proof}

 \end{document}